\documentclass{amsart}


\usepackage{amsmath,amsthm,amssymb,fancyhdr,graphicx,bbm,cancel,color,mathrsfs,todonotes,hyperref,xypic, pinlabel}
\usepackage[all]{xy}





\newtheorem{thm}{Theorem}[section]
\newtheorem{lem}[thm]{Lemma}
\newtheorem{prop}[thm]{Proposition}
\newtheorem{cor}[thm]{Corollary}
\theoremstyle{definition} 

\newtheorem{qu}[thm]{Question} 
 
\theoremstyle{remark} 
\newtheorem{rmk}[thm]{Remark}
\numberwithin{equation}{section} 

 
\newcommand{\re}{\mathbb{R}}\newcommand{\q}{\mathbb{Q}}
\newcommand{\co}{\mathbb{C}}\newcommand{\z}{\mathbb{Z}}
\newcommand{\na}{\mathbb N}

\newcommand{\al}{\alpha}\newcommand{\de}{\delta}\newcommand{\ep}{\epsilon}\newcommand{\si}{\sigma}
\newcommand{\vp}{\varphi}\newcommand{\De}{\Delta}\newcommand{\la}{\lambda}
\newcommand{\Ga}{\Gamma}
\newcommand{\wtil}{\widetilde}\newcommand{\ta}{\theta}\newcommand{\om}{\omega}

\newcommand{\cd}{\cdots}\newcommand{\ld}{\ldots}
\newcommand{\sbs}{\subset}\newcommand{\bs}{\backslash}\newcommand{\pa}{\partial}

\newcommand{\xra}{\xrightarrow}

\newcommand{\ra}{\rightarrow}
\newcommand{\hra}{\hookrightarrow}\newcommand{\onto}{\twoheadrightarrow}

\newcommand{\ca}[1]{\mathcal{#1}}\newcommand{\un}[1]{\underline{#1}}\newcommand{\mf}{\mathfrak}


\newcommand{\fr}[2]{\frac{#1}{#2}}

\newcommand{\ot}{\otimes}
\newcommand{\lan}{\langle}\newcommand{\ran}{\rangle}
\newcommand{\op}{\oplus}
\newcommand{\ti}{\times}


\newcommand{\aut}{\text{Aut}}

\newcommand{\rest}[2]{#1\bigr\vert_{#2}}

\newcommand{\Sl}{\text{SL}}
\newcommand{\SO}{\text{SO}}

\newcommand{\homeo}{\text{Homeo}}
\newcommand{\SU}{\text{SU}}
\newcommand{\SP}{\text{Sp}}

\newcommand{\Spin}{\text{Spin}}

\newcommand{\im}{\text{Im}}
\newcommand{\Hom}{\text{Hom}}

\newcommand{\gl}{\mbox{GL}}
\newcommand{\sym}{\mbox{Sym}}

\usepackage{subfig}
\addtocontents{toc}{\protect\setcounter{tocdepth}{1}}

\begin{document}

\title{Pontryagin classes of locally symmetric manifolds}

\author{Bena Tshishiku}
\address{Department of Mathematics, University of Chicago, Chicago, IL 60615} \email{tshishikub@math.uchicago.edu}


\date{\today}

\keywords{Algebraic topology, differential geometry, characteristic classes}

\begin{abstract}
In this note we compute low degree rational Pontryagin classes for every closed locally symmetric manifold of noncompact type. In particular, we answer the question: Which locally symmetric $M$ have at least one nonzero Pontryagin class?
\end{abstract}

\maketitle

\tableofcontents

\section{Introduction}\label{sec:intro}

For a manifold $M$ and $i>0$, the Pontryagin class $p_i(M)\in H^{4i}(M;\q)$ is a diffeomorphism invariant. When these classes are nonzero, they can serve as obstructions to certain geometric problems (see for example \cite{tshishiku1}). 

In this paper, we are interested in locally symmetric manifolds $M$ of noncompact type. Let $G$ be a semisimple Lie group without compact factors; let $K\sbs G$ a maximal compact subgroup; and let $\Ga\sbs G$ a cocompact, torsion-free lattice. The manifold $G/K$ has a $G$-invariant Riemannian metric and is a symmetric space of noncompact type. $\Ga$ acts freely and properly on $G/K$, and the closed manifold $M=\Ga\bs G/K$ is a locally symmetric manifold of noncompact type. 


\begin{qu}\label{q:main}For which $\Ga\bs G/K$ is $p_i(\Ga\bs G/K)\neq0$ for some $i>0$?\end{qu}

\noindent Throughout this paper all cohomology groups $H^*(\cdot)$ will be with $\q$ coefficients unless otherwise specified. 

The classical approach to determine if $p_i(\Ga\bs G/K)\neq0$ is roughly as follows. Let $U\sbs G_\co$ be the maximal compact subgroup of the complexification of $G$. By the Proportionality Principle (see \cite{kt_flat}, Section 4.14), $p_i(\Ga\bs G/K)\neq0$ if and only if $p_i(U/K)\neq0$. In \cite{bh}, Borel-Hirzebruch relate $p_i(U/K)$ to the weights of the action of $K$ on $\text{Lie}(U)$. From this, showing $p_i(U/K)\neq0$ reduces to showing that a polynomial is nonzero modulo an ideal (see \cite{bh} for details). The paper \cite{bh} also contains explicit computations, including computations that $p_1(F_4/\Spin_9)\neq0$ and $p_1(G_2/\SO_4)\neq0$, and the first Chern class $c_1(U/K)\neq0$ when $U/K$ is a compact Hermitian symmetric space and $U$ is \emph{not} of exceptional type.

\vspace{.1in}
\noindent The objective of this paper is three-fold:
\begin{enumerate}\vspace{.05in}
\item Give a stream-lined way to determine if $p_i(\Ga\bs G/K)\neq0$. One feature of our approach is that it does not use the Proportionality Principle mentioned above to reduce to computations for the compact dual $U/K$. Our main idea is to use the action of $\Ga$ on the visual boundary $\pa(G/K)$ to study Pontryagin classes. This differs from the approach of Borel-Hirzebruch but ultimately reduces to the same problem: determining if a polynomial is nonzero modulo an ideal.
\vspace{.1in}
\item Compute low dimensional Pontryagin classes $p_i(\Ga\bs G/K)$ for every locally symmetric manifold of noncompact type, including every exceptional example. As mentioned above, for a handful of $G$ these computations follow from computations done by Borel-Hirzebruch in \cite{bh}. Computations for $G=E_{6(6)}$ and $G=E_{6(-26)}$ were done by Takeuchi \cite{takeuchi}; however, the author was unable to find computations for every semisimple Lie group, especially not in a single source. The purpose of the present paper is to have a single source for computations for every $\Ga\bs G/K$. 
\vspace{.1in}
\item Answer the question: For which $\Ga\sbs G$ is $p_i(\Ga\bs G/K)\neq0$ for some $i$? A priori the answer depends on the choice of both $\Ga$ and $G$, but it follows from the Proportionality Principle that the answer is independent of $\Ga$. The author's interest in this question arose from the work \cite{tshishiku1}, in which Theorem \ref{thm:main} below is applied to a Nielsen realization question. 
\end{enumerate}



\vspace{.1in}

\noindent{\bf Main results.} A complete list of the simple real Lie groups of noncompact type are contained in Tables 1 and 2 below. We use subscripts instead of parentheses wherever possible; for example, we use $O_n$ instead of $O(n)$ to denote the orthogonal group. 
The examples in Table 1 are complex Lie groups, viewed here as real Lie groups. For any locally symmetric space $M$ of noncompact type, the universal cover $\wtil M$ is a symmetric space. Up to isogeny the isometry group of $\wtil M$ has identity component equal to a product of groups in Tables 1 and 2. See \cite{helgason} for further information. 

\begin{table}[h!]
\parbox{.45\linewidth}{
\centering
\begin{tabular}{c|cc}
$G$&$K$\\\hline
$\Sl_n\co$&$ \SU_n$\\[1mm]
$\SO_n\co$&$ \SO_n$\\[1mm]
$\SP_{2n}\co$&$ \SP_n$\\[1mm]
$E_6^\co$&$E_6$\\[1mm]
$E_7^\co$&$E_7$\\[1mm]
$E_8^\co$&$E_8$\\[1mm]
$F_4^\co$&$F_4$\\[1mm]
$G_2^\co$&$G_2$\\[1mm]
\hline
\end{tabular}
\vspace{.1in}
\caption{Complex noncompact simple Lie groups}
}
\hfill
\parbox{.45\linewidth}{
\begin{tabular}{cc|c}
&$G$&$K$\\[1mm]\hline
\text{AI}&$\Sl_n\re$ & $\SO_n$\\[1mm]
\text{AII}&$\SU^*_{2n}$ & $\SP_n$ \\[1mm]
\text{AIII}&$\SU_{p,q}$ & $S(U_p\ti U_q)$ \\[1mm]
\text{BDI}&$\SO_{p,q}$ & $\SO_p\ti \SO_q$  \\[1mm]
\text{DIII}&$\SO^*_{2n}$&$U_n$\\[1mm]
\text{CI}&$\SP_{2n}\re$ & $U_n$ \\[1mm]
\text{CII}&$\SP_{p,q}$ & $\SP_p\ti\SP_q$ \\[1mm]
\text{E$_6$I}&$E_{6(6)}$& $\mf{sp}_4$ \\[1mm]
\text{E$_6$II}&$E_{6(2)}$& $\mf{su}_6\ti\mf{su}_2$ \\[1mm]
\text{E$_6$III}&$E_{6(-14)}$&$\mf{so}_{10}\ti\mf{so}_2$ \\[1mm]
\text{E$_6$IV}&$E_{6(-26)}$& $\mf f_4$ \\[1mm]
\text{E$_7$V}&$E_{7(7)}$& $\mf{su}_8$ \\[1mm]
\text{E$_7$VI}&$E_{7(-5)}$& $\mf{so}_{12}\ti\mf{su}_2$ \\[1mm]
\text{E$_7$VII}&$E_{7(-25)}$& $\mf e_6\ti\mf{so}_2$ \\[1mm]
\text{E$_8$VIII}&$E_{8(8)}$& $\mf{so}_{16}$ \\[1mm]
\text{E$_8$IX}&$E_{8(-24)}$& $\mf e_7\ti\mf{su}_2$ \\[1mm]
\text{F$_4$I}&$F_{4(4)}$& $\mf{sp}_3\ti\mf{su}_2$ \\[1mm]
\text{F$_4$II}&$F_{4(-20)}$& $\mf{so}_9$ \\[1mm]
\text{G$_2$}&$G_{2(2)}$& $\mf{su}_2\ti\mf{su}_2$ \\[1mm]
\hline\end{tabular}
\vspace{.1in}
\caption{Real noncompact simple Lie groups}
}
\end{table}

We have separated the simple, real Lie groups into two separate tables because for $G$ in Table 1, we can conclude $p_i(\Ga\bs G/K)=0$ for all $i>0$ without any computation (see Section \ref{sec:cw}). The main work involved in the present paper is to determine those $G$ in Table 2 for which $p_i(\Ga\bs G/K)=0$ for all $i>0$. Here is a summary of our results.

\begin{thm}\label{thm:main}
Let $G$ be any real, simple, noncompact Lie group and let $\Ga\sbs G$ be a cocompact lattice. Then $p_i(\Ga\bs G/K)=0$ for all $i>0$ if and only if $G$ is
\begin{itemize}
\item[(i)] one of the Lie groups in Table $1$, or 
\item[(ii)] one of $\emph{\Sl}_n(\re)$, $\emph{\SU}^*_{2n}$, $\emph{\SO}_{p,1}$, $\emph{\SO}_{2,2}$, $\emph{\SO}_{3,3}$, or $E_{6(-26)}$.  
\end{itemize}
\end{thm}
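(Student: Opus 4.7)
The plan splits into two directions: establishing vanishing of all rational Pontryagin classes for $G$ in (i) or (ii), and exhibiting a nonzero $p_i$ for each remaining entry of Table 2. By the Proportionality Principle it suffices, for each fixed $G$, to settle the question on the compact dual $U/K$. From there one may follow either the boundary-action framework the paper sets up, or the equivalent Borel--Hirzebruch recipe: identify $p_i(U/K)$ as a $W_K$-invariant polynomial in the weights of the $K$-action on $\mf{p}$ and test its image in $H^*(BK;\q)$ modulo the ideal generated by the positive-degree invariants pulled back from $H^*(BU;\q)$. Both sides of the theorem then become explicit polynomial computations.

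For Table 1 the key observation is that if $G$ is complex, then its complexification as a real group is $G_\co\cong G\times G$, whose maximal compact is $U\times U$ with $K=U$ embedded diagonally. Hence the compact dual $(U\times U)/U \cong U$ is itself a compact Lie group, whose tangent bundle is parallelizable, so all rational Pontryagin classes vanish and (i) follows. For the vanishing cases in (ii) I would treat each individually: $\SO_{p,1}$ has compact dual $S^p$; the isogenies $\SO_{2,2}\sim\Sl_2\re\times\Sl_2\re$ and $\SO_{3,3}\sim\Sl_4\re$ reduce to (products of) $\Sl_n\re$; for $\Sl_n\re$ and $\SU^*_{2n}$ the compact duals $\SU_n/\SO_n$ and $\SU_{2n}/\SP_n$ admit a direct check that the Pontryagin polynomials in the $\mf{p}$-weights lie in the relevant ideal; and for $E_{6(-26)}$, with compact dual $E_6/F_4$ of dimension $26$, a short $F_4$-weight computation (as carried out by Takeuchi) handles the small list of a priori possibly nonzero classes $p_1,\ldots,p_6$.

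For the non-vanishing direction, the plan is to exhibit a concrete low-degree nonzero $p_i$ case-by-case. In most cases it suffices to compute $p_1 = \fr{1}{2}\sum_j\la_j^2$, summed over the $K$-weights on $\mf{p}$, and check that its image in $H^4(BK;\q)$ is nonzero modulo the ideal; when $p_1$ accidentally vanishes, one passes to $p_2$ or $p_3$. For the classical series AIII, BDI (with the excluded cases from (ii) removed), DIII, CI, CII, this is uniform root-system bookkeeping. For the exceptional types E$_6$I, E$_6$II, E$_6$III, E$_7$V, E$_7$VI, E$_7$VII, E$_8$VIII, E$_8$IX, F$_4$I, F$_4$II, and G$_2$, one reads the weights of $\mf{p}$ from standard tables and performs the reduction one group at a time.

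The main obstacle is the organized weight computation for the exceptional groups: both identifying $\mf{p}$ as a $K$-module and verifying nonvanishing modulo the appropriate Chevalley-type ideal. These computations do not unify into a single formula and must be carried out individually. A secondary subtlety is certifying the completeness of the vanishing list in (ii), which requires ruling out further accidental vanishings in small-rank classical cases by inspecting low $(p,q)$ entries in AIII and BDI carefully.
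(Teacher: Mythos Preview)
Your plan is sound and ultimately lands on the same endpoint as the paper---showing, for each $G$, whether the Pontryagin polynomials in the weights of the isotropy representation lie in the ideal generated by the positive-degree $W_U$-invariants---but the route you take differs from the paper's in a couple of notable ways.

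First, you invoke the Proportionality Principle to reduce to the compact dual, then apply Borel--Hirzebruch directly. The paper deliberately avoids this reduction: it works with $\Ga\bs G/K$ itself via the flat structure on $T^1M$, and uses Milnor's Chern--Weil theorem (Theorem~\ref{thm:milnor}) to identify the kernel of $H^*(BG)\to H^*(BG^\de)$ with the ideal generated by $\im\big(H^{>0}(BG_\co)\to H^*(BG)\big)$. Both arguments produce the identical polynomial-in-an-ideal question, so the case-by-case computations are the same; the difference is conceptual packaging. Your route is the classical one; the paper's buys a self-contained treatment that does not appeal to proportionality with $U/K$.

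Second, your treatment of the vanishing cases is in places more elementary than the paper's. For Table~1 you observe that the compact dual is the Lie group $U$ itself, hence parallelizable, which is cleaner than invoking Theorem~\ref{thm:milnor2}. For $\SO_{2,2}$ and $\SO_{3,3}$ you use the accidental isogenies to (products of) $\Sl_n\re$, whereas the paper computes these directly (Corollary~\ref{cor:SOpp2} and Theorem~\ref{thm:SOp1}). For $\Sl_n\re$, $\SU^*_{2n}$, and $E_{6(-26)}$ the paper's argument is actually sharper than a class-by-class check: it shows $j^*:H^*(BU)\to H^*(BK)$ is surjective, so the relevant ideal is all of $H^{>0}(BK)$ and \emph{every} $p_i$ vanishes at once, without bounding $i$ by dimension. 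You might adopt this shortcut rather than checking $p_1,\dots,p_6$ for $E_6/F_4$ individually.

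For the non-vanishing direction your plan matches the paper's execution: compute $p_1(\iota)$ from the $\mf p$-weights and, when it lies in the ideal (this happens for $\SO_{p,p}$, $\SP_{p,p}$, $E_{8(8)}$, $E_{7(7)}$, $E_{6(6)}$, $F_{4(-20)}$), pass to $p_2$. Your remark that $p_3$ might be needed is unnecessary---$p_1$ or $p_2$ always suffices, which is one of the paper's observations.
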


Theorem \ref{thm:main} and its proof (in Section \ref{sec:computation}) show that the answer to Question \ref{q:main} is somewhat subtle. For example, let $G=\SO_{p,q}$ and assume $p\ge q$. If $p,q\ge2$, then $p_1(\Ga\bs G/K)\neq0$ if and only if $p\neq q$. If $p=q$, then $p_2(\Ga\bs G/K)\neq0$ as long as $p\ge4$. If either $p=q=2$ or $p=q=3$ or $p>q=1$, then $p_i(\Ga\bs G/K)=0$ for all $i>0$.


It turns out that if $G$ is not one of the groups from (i) or (ii) in Theorem \ref{thm:main}, and $\Ga\sbs G$ is cocompact, then either $p_1(\Ga\bs G/K)\neq0$ or $p_2(\Ga\bs G/K)\neq0$. Thus to answer Question \ref{q:main}, we need only consider low dimensional Pontryagin classes. To determine if $p_i(\Ga\bs G/K)\neq0$ for $i\ge3$ using the methods of this paper would be feasible, but more computationally intensive. 




The proof of Theorem \ref{thm:main} extends to give a complete classification of which $\Ga\bs G/K$ have $p_i(\Ga\bs G/K)=0$ for all $i>0$. 
\begin{cor}\label{cor:nonzero}
Let $G=\prod G_i$ be a semisimple Lie group with simple, noncompact factors $G_i$, and let $\Ga\sbs G$ be a cocompact lattice. Then $p_i(\Ga\bs G/K)=0$ for all $i>0$ if and only if each $G_i$ is one of the groups from $(i)$ or $(ii)$ in Theorem \ref{thm:main}.
\end{cor}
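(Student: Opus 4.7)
The plan is to combine the Proportionality Principle with the Künneth formula, reducing the semisimple case to the simple case already handled by Theorem \ref{thm:main}.

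First, I would observe that when $G = \prod G_i$, a compatible maximal compact is $K = \prod K_i$, and the compact dual of $G/K$ factors as $U/K = \prod_i U_i/K_i$, where $U_i \sbs (G_i)_\co$ is the maximal compact of the complexification of $G_i$. By the Proportionality Principle (\cite{kt_flat}, Section 4.14) applied to $G$ and to each $G_i$, $p_j(\Ga \bs G/K) = 0$ iff $p_j(U/K) = 0$, and likewise $p_j(\Ga_i \bs G_i/K_i) = 0$ iff $p_j(U_i/K_i) = 0$, for any cocompact lattices $\Ga \sbs G$ and $\Ga_i \sbs G_i$ (such $\Ga_i$ exist by Borel--Harish-Chandra). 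In particular, the vanishing of the Pontryagin classes of $\Ga \bs G/K$ depends only on the pair $(G,K)$, not on $\Ga$, so the corollary reduces to a statement about the compact dual.

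Second, I would run a Künneth argument. Multiplicativity of total Pontryagin classes on products gives
\[
p(U/K) = \prod_i \pi_i^* p(U_i/K_i)
\]
in $H^*(U/K) = \bigotimes_i H^*(U_i/K_i)$, where $\pi_i$ is the projection to the $i$-th factor. The class $\pi_i^* p_k(U_i/K_i)$ lies in the Künneth summand whose $i$-th coordinate has degree $4k$ and whose other coordinates have degree $0$. Expanding the product, the monomials $\prod_i \pi_i^* p_{k_i}(U_i/K_i)$ that sum to $p_j(U/K)$ lie in pairwise distinct Künneth summands indexed by multidegrees $(4k_1, \ld, 4k_n)$ with $\sum k_i = j$, so none can cancel another. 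Hence $p_j(U/K) = 0$ for all $j > 0$ if and only if $p_k(U_i/K_i) = 0$ for every factor $i$ and every $k > 0$.

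Finally, I would apply Theorem \ref{thm:main} to each simple factor $G_i$, which (using Proportionality once more, this time in reverse) translates ``$p_k(U_i/K_i) = 0$ for all $k > 0$'' into ``$G_i$ appears on the list (i) or (ii),'' and then combine the conditions across all $i$. I do not foresee a real obstacle beyond Theorem \ref{thm:main} itself; the only extra content in the semisimple case is the bigrading observation ruling out cross-cancellation between Künneth summands, which is immediate.
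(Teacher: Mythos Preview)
Your argument is correct. The Proportionality Principle reduces the question to the compact dual $U/K=\prod_i U_i/K_i$, and your K\"unneth/bigrading observation cleanly shows that the total Pontryagin class of a product has no cross-cancellation among factors, so the vanishing of all $p_j(U/K)$ is equivalent to the vanishing of all $p_k(U_i/K_i)$ for each $i$. Invoking Borel's existence of cocompact lattices in each $G_i$ to translate back via Proportionality and apply Theorem~\ref{thm:main} is fine.

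That said, your route differs from the paper's. The paper does not give a separate proof of the corollary; it says only that ``the proof of Theorem~\ref{thm:main} extends,'' meaning the three-step method of (\ref{eqn:main}) carries over directly to semisimple $G=\prod G_i$: the isotropy representation splits as $\iota=\bigoplus_i\iota_i$, so $p(\iota)=\prod_i p(\iota_i)$ in $H^*(BK)\simeq\bigotimes_i H^*(BK_i)$, and the kernel of $\al_2^*$ is generated by the images of the $H^{>0}(B(G_i)_\co)$ in the respective tensor factors; one then reads off (non)vanishing factorwise exactly as in Section~\ref{sec:computation}. This keeps with the paper's stated aim of \emph{avoiding} the Proportionality Principle. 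Your approach is slicker as a black-box reduction to Theorem~\ref{thm:main}, at the cost of importing Proportionality (and Borel's existence theorem) as outside ingredients; the paper's approach stays self-contained but requires checking that each step behaves well under products.
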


Instead of asking for $\Ga\bs G/K$ with a nonzero Pontryagin \emph{class}, one could ask for $\Ga\bs G/K$ with a nonzero Pontryagin \emph{number}. Specifically, let $M$ be a manifold with $\dim M=4k$, and let $i_1,\ld,i_m\in\na$ such that $i_1+\cd+i_m=k$. The cup product $p_{i_1}(M)\cup\cd\cup p_{i_m}(M)$ can be evaluated on the fundamental class $[M]\in H_n(M)$, and the resulting integer 
\[\big\lan p_{i_1}(M)\cup\cd\cup p_{i_m}(M),[M]\big\ran\in\z\]
is called a \emph{Pontryagin number}. These integers are \emph{topological} invariants of $M$ by Novikov's theorem (see \cite{lafont}, for example). We remark that a manifold can have zero Pontryagin numbers but have some nonzero Pontryagin classes. For example, take $\Ga\sbs G=\SO_{p,q}$ with $p,q$ both odd. As explained in \cite{lafont} Theorem B, the Pontryagin numbers of $\Ga\bs G/K$ are all zero. On the other hand, we show that if $p>q>2$, then $p_1(\Ga\bs G/K)\neq0$ (see Section \ref{sec:computation}). For more information about Pontryagin numbers of locally symmetric spaces, see \cite{lafont}. For recent results about the Euler characteristic of homogeneous spaces, see Mostow \cite{mostow}.

\vspace{.1in}

\noindent {\bf Method of proof.} We compute $p_i(\Ga\bs G/K)$ by the following procedure. Let $n=\dim M$. The unit tangent bundle $T^1M\ra M$ has a flat $\homeo(S^{n-1})$ structure with monodromy $\mu:\Ga\ra\homeo(S^{n-1})$ given by the action of $\Ga$ on the visual boundary $\pa(G/K)\simeq S^{n-1}$. The homomorphism $\mu$ induces a map of classifying spaces $M\approx B\Ga\ra B\homeo(S^{n-1})$ and hence a map 
\[\mu^*: H^*\big(B\homeo(S^{n-1})\big)\ra H^*(M).\] 
There are classes $q_i\in H^{4i}\big(B\homeo(S^{n-1})\big)$ for which $\mu^*(q_i)=p_i(M)$ (see Section \ref{sec:algorithm}). 
To determine if $\mu^*(q_i)=0$, note that $\mu$ factors $\mu=\al_1\circ\al_2\circ\al_3$, where
\begin{equation}\label{eqn:rep}
\Ga\xra{\al_3} G^\de\xra{\al_2} G\xra{\al_1} \homeo(S^{n-1}).\end{equation}
Here $G^\de$ is the abstract group $G$ viewed as a Lie group with the discrete topology. The map $\al_3$ is the inclusion, $\al_2$ is the identity (which is continuous),  and $\al_1$ is the action of $G$ on its visual boundary. 

To understand $\mu^*=(\al_1\circ\al_2\circ\al_3)^*$, we study the individual maps 
\vspace{.05in}
\begin{equation}\label{eqn:main}\boxed{H^*\big(B\homeo(S^{n-1})\big)\xra{\al_1^*}H^*(BG)\xra{\al_2^*}H^*(BG^\de)\xra{\al_3^*}H^*(M).}\end{equation}

\vspace{.05in}

\noindent\un{Step 1.} Computing $\al_1^*$ reduces to computing weights of the adjoint action of $K$ on $\mf g=\text{Lie}(G)$ (see Section \ref{sec:isotropy1}). 

\vspace{.1in} 

\noindent\un{Step 2.} Computing $\al_2^*$ reduces to computing the map $H^*(BG_\co)\ra H^*(BG)$ induced by complexification $G\ra G_\co$. This follows from Chern-Weil theory (see Section \ref{sec:cw}). 

\vspace{.1in}

\noindent\un{Step 3.} $\al_3^*$ is injective on the image of $H^*(BG)\ra H^*(BG^\de)$ by a transfer argument (see Section \ref{sec:computation}).

\vspace{.1in}

\noindent{\bf Structure of the paper.} In Section \ref{sec:algorithm} we define the classes $q_i$ mentioned above and show that $\mu^*(q_i)=p_i(M)$. 
In Section \ref{sec:background} we recall Borel's computation of $H^*(BK)$ for $K$ a compact Lie group, and we recall how characteristic classes of a representation can be computed in terms of the weights of that representation. In Sections \ref{sec:isotropy1} and \ref{sec:cw} we complete Steps 1 and 2, respectively. 
In Section \ref{sec:computation} we explain Step 3 and combine Steps 1, 2, and 3 to conclude which $\Ga\bs G/K$ have $p_i(\Ga\bs G/K)\neq0$ for some $i>0$ and thus prove Theorem \ref{thm:main}. 

\subsection{Acknowledgements} The author would like to thank his advisor B.\ Farb for his gracious and ceaseless support, for his encouragement to complete this project, and for extensive comments that significantly improved a draft of this paper. 

\section{An algorithm for computing Pontryagin classes}\label{sec:algorithm}

This section has two goals. First we recall the definition of the Pontryagin classes of topological sphere bundles $q_i\in H^{4i}\big(B\homeo(S^{n-1})\big)$. Then we explain why $\mu^*(q_i)=p_i(M)$. This will follow from the construction of a flat $\homeo(S^{n-1})$ structure on the unit tangent bundle $T^1M\ra M$. 

\subsection{Pontryagin classes of sphere bundles} The Pontryagin classes $p_i\in H^*(BO_n)$ are classically defined as invariants of real vector bundles (see \cite{ms}). The following proposition shows that these invariants can also be defined for topological $\re^n$-bundles. 
\begin{prop}\label{prop:ratsurj}
 The inclusion $g:O_{n}\hra\emph{\homeo}(\re^n)$ induces a surjection  
\[g^*:H^*\big(B\emph{\homeo}(\re^n);\q\big)\ra H^*\big(BO_{n};\q\big).\]
\end{prop}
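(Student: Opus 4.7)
The plan is to reduce surjectivity of $g^*$ to producing cohomological lifts for each of the polynomial generators of $H^*(BO_n;\q)$. Recall that rationally
\[ H^*(BO_n;\q)\ \cong\ \q[p_1,\ld,p_{\lfloor n/2\rfloor}], \]
a polynomial ring on the Pontryagin classes (the Euler class is not $O_n$-invariant, and its square is $p_{n/2}$, so it contributes no extra generator). It therefore suffices to construct, for each $1\le i\le \lfloor n/2\rfloor$, a class $\til p_i\in H^{4i}\big(B\homeo(\re^n);\q\big)$ with $g^*\til p_i=p_i$; multiplicativity then upgrades this to surjectivity onto the whole ring.

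The core input for producing such lifts is the bundle-theoretic form of Novikov's theorem on topological invariance of rational Pontryagin classes: every topological $\re^n$-bundle $\xi\colon E\ra B$ over a paracompact base carries natural rational Pontryagin classes $p_i(\xi)\in H^{4i}(B;\q)$, coinciding with the usual Pontryagin classes when $\xi$ happens to be a vector bundle. Applied to the universal topological $\re^n$-bundle over $B\homeo(\re^n)$, this produces classes $\til p_i$, and naturality under the classifying map $Bg\colon BO_n\ra B\homeo(\re^n)$ gives $g^*\til p_i=p_i$.

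The main obstacle is really this bundle-theoretic upgrade of Novikov's theorem, which is substantially deeper than the manifold version and uses Kirby--Siebenmann smoothing theory together with Sullivan's rational homotopy theory of $BTOP$. A more self-contained homotopy-theoretic alternative is to analyze the Serre spectral sequence of the fibration
\[ \homeo(\re^n)/O_n\ \ra\ BO_n\ \xra{Bg}\ B\homeo(\re^n), \]
since $(Bg)^*$ is surjective on rational cohomology as soon as the fiber has rationally trivial positive-degree cohomology. For $n\le 3$ this is elementary, because $O_n\hra\homeo(\re^n)$ is already a homotopy equivalence (Kneser; Fisher; Hatcher). For $n\ge 5$ it is a consequence of Kirby--Siebenmann, which forces $\pi_*(\homeo(\re^n)/O_n)$ to be finite in each degree. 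The case $n=4$ is delicate but can be treated separately, and in any event the proposition is used in the paper only for $n$ large (the dimensions of symmetric spaces appearing in Table~2), so the hard small-$n$ cases are avoidable in practice.
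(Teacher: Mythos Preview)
Your proposal is correct and matches the paper's approach: the paper does not actually prove this proposition but simply says it ``can be proved using results from Kirby--Siebenmann'' and points to \cite{tshishiku1} for the argument. Your approach (a) --- reducing to the existence of rational Pontryagin classes for topological $\re^n$-bundles, which in turn rests on Kirby--Siebenmann --- is precisely the standard argument being invoked.

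One small sharpening worth noting: the cleanest way to produce the lifts $\til p_i$ uniformly in $n$ (in particular avoiding any special pleading for $n=4$) is via stabilization. The square
\[
\begin{xy}
(0,10)*+{BO_n}="A";
(30,10)*+{B\homeo(\re^n)}="B";
(0,-5)*+{BO}="C";
(30,-5)*+{B\Top}="D";
{\ar "A";"B"};
{\ar "A";"C"};
{\ar "B";"D"};
{\ar "C";"D"};
\end{xy}
\]
commutes, and by Kirby--Siebenmann (giving $\Top/\Pl\simeq K(\z/2,3)$) together with the finiteness of $\pi_*(\Pl/O)$, the bottom map is a rational equivalence. Hence the stable $p_i\in H^{4i}(BO;\q)$ lift to $H^{4i}(B\Top;\q)$, and pulling back along the right vertical map gives $\til p_i\in H^{4i}\big(B\homeo(\re^n);\q\big)$ restricting to $p_i$. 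This sidesteps your approach (b) entirely, where the unstable claim that $\homeo(\re^n)/O_n$ has finite homotopy groups in \emph{all} degrees is more delicate than you suggest (Kirby--Siebenmann gives this only in a range), and where $n=4$ is genuinely problematic.
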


Proposition \ref{prop:ratsurj} can be proved using results from Kirby-Siebenmann \cite{ks}. The argument (which the author learned from A. Hatcher) is given in \cite{tshishiku1}. 

From Proposition \ref{prop:ratsurj}, Pontryagin classes of sphere bundles can be defined as follows. Define a homomorphism $\de:\homeo(S^{n-1})\ra\homeo(\re^{n})$ using the Alexander trick: $\de(f)$ performs the homeomorphism $f$ on the sphere of radius $r$ for every $r>0$, and $\de(f)$ fixes the origin. This induces maps between classifying spaces and hence a map
\[\de^*:H^*\big(B\homeo(\re^{n})\big)\ra H^*\big(B\homeo(S^{n-1})\big)\]
The restriction of $\de$ to the subgroup $O_n\sbs\homeo(S^{n-1})$ is the standard action $O_n\ra\homeo(\re^n)$, so there is a commutative diagram
\begin{equation}\label{diag:ratsurj}
\begin{gathered}
\begin{xy}
(-22,0)*+{H^*\big(B\homeo(\re^n)\big)}="A";
(22,0)*+{H^*\big(B\homeo(S^{n-1})\big)}="B";
(0,-10)*+{H^*(BO_n)}="C";
{\ar"A";"B"}?*!/_3mm/{\de^*};
{\ar "A";"C"}?*!/^3mm/{g^*};
{\ar "B";"C"}?*!/_3mm/{r^*};
\end{xy}
\end{gathered}
\end{equation}

By Proposition \ref{prop:ratsurj}, there is a class $\wtil p_i\in H^{4i}\big(B\homeo(\re^n)\big)$ with $g^*(\wtil p_i)=p_i$. Since Diagram \ref{diag:ratsurj} commutes, $\de^*(\wtil p_i)\in H^{4i}\big(B\homeo(S^{n-1})\big)$ is nontrivial. We refer to the classes $q_i=\de^*(\wtil p_i)$ as the \emph{Pontryagin classes of topological sphere bundles}.

\subsection{Flat structure on the unit tangent bundle} 
We continue to assume that $G$ is a semisimple Lie group without compact factors and that $K\sbs G$ is a maximal compact subgroup. With these assumptions $G/K$ is contractible and has a metric of nonpositive curvature so that $G$ acts by isometries on $G/K$. In addition, $G$ acts on the visual boundary $\pa (G/K)\simeq S^{n-1}$ (see e.g.\ \cite{bgs}). If $G$ has rank 1, then the action of $G$ on $\pa(G/K)$ is smooth, but this is not known in general. Thus, even though $G/K$ is an algebraic example of a contractible, nonpositively curved manifold, the action on $\pa(G/K)$ is a priori only an action by homeomorphisms. By restriction to $\Ga\sbs G$, we obtain an action $\Ga\ra\homeo(S^{n-1})$, and from this action we can build an $S^{n-1}$-bundle $E\ra M$, where $E$ is the quotient of $(G/K)\ti S^{n-1}$ by the diagonal action of $\Ga$. 

\begin{lem}[See e.g.\ \cite{tshishiku1}]\label{lem:tang}
Let $M$ be a complete Riemannian manifold of nonpositive curvature with universal cover $\wtil M$. The sphere bundle with monodromy given by the action of the deck group $\pi_1(M)$ on the visual boundary $\pa\wtil M\simeq S^{n-1}$ is isomorphic to the unit tangent bundle $T^1M\ra M$. 
\end{lem}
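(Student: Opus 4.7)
The plan is to build an explicit $\pi_1(M)$-equivariant trivialization of the unit tangent bundle of $\wtil M$ whose target is $\wtil M \times \pa\wtil M$ with the diagonal action; quotienting by $\pi_1(M)$ will then identify $T^1M$ with the sphere bundle $E$.

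First I would invoke the Cartan-Hadamard theorem: since $\wtil M$ is complete, simply connected, and nonpositively curved, every pair $(x,v) \in T^1\wtil M$ determines a unique geodesic ray $\gamma_{x,v}:[0,\infty)\to\wtil M$ with $\gamma_{x,v}(0)=x$ and $\dot\gamma_{x,v}(0)=v$. Define
\[
\Phi : T^1\wtil M \longrightarrow \wtil M \times \pa\wtil M, \qquad (x,v)\longmapsto \big(x, [\gamma_{x,v}]\big),
\]
where $[\gamma_{x,v}]$ denotes the equivalence class of the ray in the visual boundary. The key geometric input is the standard fact from CAT$(0)$ geometry that, for any fixed $x \in \wtil M$, the map $v\mapsto[\gamma_{x,v}]$ is a homeomorphism $T^1_x\wtil M\xra{\cong}\pa\wtil M$ (this is precisely how the visual topology on $\pa\wtil M$ is defined). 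So fiberwise $\Phi$ is a homeomorphism, and a short check using the cone topology shows $\Phi$ is continuous jointly in $(x,v)$; a continuous inverse is given by sending $(x,\xi)$ to the initial velocity of the unique geodesic ray from $x$ to $\xi$. Thus $\Phi$ is a trivialization of $T^1\wtil M \to \wtil M$.

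Next I would check $\pi_1(M)$-equivariance. The deck group acts on $T^1\wtil M$ by the differential of its action on $\wtil M$, and on $\wtil M\times \pa\wtil M$ diagonally via its isometric action on $\wtil M$ and the induced action on $\pa\wtil M$. For any deck transformation $g\in\pi_1(M)$ and $(x,v)\in T^1\wtil M$, the geodesic ray $g\circ \gamma_{x,v}$ has initial point $g(x)$ and initial velocity $dg_x(v)$, so
\[
\Phi\big(g(x), dg_x(v)\big) = \big(g(x),\, [g\circ\gamma_{x,v}]\big) = \big(g(x),\, g\cdot[\gamma_{x,v}]\big),
\]
which is exactly the diagonal image of $\Phi(x,v)$. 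Here I am using that isometries of $\wtil M$ send geodesic rays to geodesic rays and that the induced boundary action is by definition $g\cdot[\gamma] = [g\circ\gamma]$.

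Passing to the quotient by $\pi_1(M)$ gives the desired bundle isomorphism $T^1M \cong E$ over $M$. The main subtlety—and the only place care is needed—is verifying joint continuity of $\Phi$ and $\Phi^{-1}$ in the visual topology on $\pa\wtil M$; once this is granted, the rest is bookkeeping with the deck group action. Everything else (contractibility of the base, identification of fibers, equivariance) is immediate from Cartan-Hadamard and the fact that $\pi_1(M)$ acts by isometries.
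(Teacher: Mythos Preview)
Your argument is correct and is the standard one: use Cartan--Hadamard to trivialize $T^1\wtil M$ via the map $(x,v)\mapsto(x,[\gamma_{x,v}])$, check this is a fiberwise homeomorphism onto $\wtil M\times\pa\wtil M$ equivariant for the deck group, and descend. Note, however, that the paper does not actually prove this lemma---it is stated with a reference to \cite{tshishiku1} and no proof is given in the text---so there is nothing to compare your approach against beyond saying that what you wrote is exactly the expected argument such a citation would point to.
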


Since $T^1M\ra M$ is flat with monodromy $\mu:\Ga\ra\homeo(S^{n-1})$ equal to the $\Ga$-action on $\pa (G/K)$, which is the restriction of the $G$-action on $\pa (G/K)$, the monodromy factors as claimed in (\ref{eqn:rep}).

\section{Compact Lie groups, characteristic classes, and representations}\label{sec:background}

Let $K$ be a compact Lie group and let $BK$ be its classifying space. In this section we recall Borel's computation of the cohomology $H^*(BK)$ (see Theorem \ref{thm:borel}). For a representation $\rho:K\ra\gl_m(\co)$, we recall how Theorem \ref{thm:borel} allows one to compute the image of $\rho^*:H^*(B\gl_m(\co))\ra H^*(BK)$ (specifically the image of the Chern classes) in terms of the weights of $\rho$. For more details, see \cite{bh}.

\subsection{The cohomology of $BK$}\label{sec:cohoBK}

Let $K$ be a compact Lie group. Let $S\sbs K$ be a maximal abelian subgroup. $S$ is homeomorphic to an $r$-torus $(S^1)^r$ for some integer $r$, which is called the \emph{rank} of $K$. Let $N_K(S)$ denote the normalizer of $S$ in $K$. The \emph{Weyl group} $W$ is defined as $N_K(S)/S$. 

\begin{thm}[Borel \cite{borel:toplie}]\label{thm:borel} Let $K$ be a compact Lie group with maximal torus $S$ and Weyl group $W$. The inclusion $S\hra K$ induces an isomorphism 
\[H^*(BK;\q)\simeq H^*(BS;\q)^W.\]
\end{thm}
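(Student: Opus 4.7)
The plan is to factor the inclusion $S\hra K$ through the normalizer $N=N_K(S)$, so that the induced maps of classifying spaces sit in a sequence
\[BS\xra{p} BN\xra{\pi} BK,\]
and to analyze each map on rational cohomology separately.

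First, $p$ is a regular finite covering with deck transformation group $W$: modelling $BS$ and $BN$ as $EK/S$ and $EK/N$, the natural action of $W=N/S$ on $EK/S$ (induced by the $N$-action on $EK$) is free. Since $|W|$ is invertible in $\q$, the usual transfer/averaging homomorphism splits $p^*$ and yields an isomorphism
\[H^*(BN;\q)\xra{\sim}H^*(BS;\q)^W.\]
This step is formal.

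The heart of the argument is to show that $\pi^*:H^*(BK;\q)\ra H^*(BN;\q)$ is an isomorphism. I would apply the Serre spectral sequence to the fibration $K/N\ra BN\ra BK$; everything reduces to the claim that $K/N$ is rationally acyclic, i.e.\ $H^*(K/N;\q)=\q$ concentrated in degree $0$. Two inputs combine to give this: (a) the Bruhat decomposition presents the flag manifold $K/S$ (equivalently $K_\co/B$) as a CW complex with exactly $|W|$ cells, all of even real dimension, so that $\chi(K/S)=|W|$ and $H^*(K/S;\q)$ sits in even degrees only; and (b) the $|W|$-fold covering $K/S\ra K/N$ yields $H^*(K/N;\q)\cong H^*(K/S;\q)^W$, still in even degrees, with $\chi(K/N)=|W|/|W|=1$. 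Connectedness of $K/N$ forces $\dim H^0(K/N;\q)=1$, and with non-negative even Betti numbers summing to $\chi=1$, all higher Betti numbers must vanish.

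Once $K/N$ is rationally acyclic, the Serre spectral sequence of $K/N\ra BN\ra BK$ has nonzero $E_2$ only in the $q=0$ row, so $\pi^*$ is an isomorphism. Composing with the transfer isomorphism above gives the desired $H^*(BK;\q)\cong H^*(BS;\q)^W$. The main obstacle is the rational acyclicity of $K/N$; both the covering-space transfer of the first step and the spectral sequence of the second are routine once that input is in hand. An equivalent alternative would be to apply the Serre spectral sequence directly to $K/S\ra BS\ra BK$, using even-degree concentration of the fiber cohomology to force degeneration, and then identify $H^*(BK;\q)$ as the $W$-invariants via a Leray--Hirsch decomposition together with a Poincar\'e-series comparison.
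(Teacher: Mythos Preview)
The paper does not supply a proof of this theorem; it is stated with attribution to Borel and used purely as background. Your argument is a correct, standard proof (for connected $K$, which is the only case the paper needs): the factorization through $BN$, the transfer identification $H^*(BN;\q)\simeq H^*(BS;\q)^W$, and the rational acyclicity of $K/N$ deduced from the Bruhat cell count together with the Euler-characteristic argument are all valid. There is no proof in the paper to compare against.
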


\subsection{Weights, transgression, and characteristic classes}\label{sec:chern}
Let $K$ be a compact Lie group with maximal torus $S$, and let $\rho:K\ra\gl_n(\co)$ be a representation. The restriction $\rho\rest{}{S}$ is a sum of 1-dimensional representations $\la_i:S\ra\co^\ti$ called the \emph{weights} of $\rho$. By the identification $\co^\ti=K(\z,1)$ and the fact that cohomology is a represented functor, the weights can be viewed as elements of $H^1(S;\z)$. Since we are interested in cohomology with $\q$-coefficients, we view the weights as elements of $H^1(S)=H^1(S;\q)$. 

For the representation $\rho$, we are interested in computing the induced map 
\[\rho^*:H^*\big(B\gl_n(\co);\q\big)\ra H^*(BK;\q)\]
in terms of the weights $\la_1,\ld,\la_n\in H^1(S;\q)$. Let $S\ra ES\ra BS$ be the universal $S$-bundle. The edge map 
\[\tau:H^1(S;\q)\ra H^2(BS;\q)\]
on the $E_2$-page of the Serre spectral sequence of this fibration is an isomorphism, called the \emph{transgression}. Let $w_i=-\tau(\la_i)$. The \emph{total Chern class} $c(\rho)\in H^*(BS;\q)$ is defined by
\begin{equation}\label{eqn:totalchern}c(\rho)=1+c_1(\rho)+\cd+c_n(\rho)=\prod_{i=1}^n(1+w_i).\end{equation}
The Weyl group $W$ permutes the weights of $\rho$, so $c(\rho)\in H^*(BS)^W\simeq H^*(BK)$. 

If $\rho:K\ra\gl_n(\re)$ is a real representation and $\rho_\co:K\ra\gl_n(\co)$ is the complexification, one defines the $i$-th \emph{Pontryagin class} $p_i(\rho)$ of $\rho$  by the formula
\[p_i(\rho)=(-1)^{i}c_{2i}(\rho_\co).\]

\vspace{.05in}
\begin{rmk} The transgression $\tau$ can be given concretely as follows. 
Identify $H^1(H;\z)\simeq\Hom(H,\co^\ti)$, and define $\Hom(S,\co^\ti)\ra H^2(BS)$: Given $\vp:S\ra\co^\ti$, form the space
\[E_\vp=\fr{EH\ti \co^\ti}{H}\]
which is the quotient  of $EH\ti \co^\ti$ by the diagonal action of $H$, where $H$ acts on $\co^\ti$ by $\vp$. Now $E_\vp$ has a natural projection to $EH/H=BH$, and this makes $E_\vp$ a $\co^\ti$ bundle over $BH$. The first Chern class $c_1(E_\vp)$ lives in $H^2(BS;\z)$, and the transgression is given by $\tau(\vp)=c_1(E_\vp)$. 
\end{rmk}

\subsection{The invariant polynomials $H^*(BS)^W$}\label{sec:invar} Let $K$ be a compact Lie group with maximal torus $S$. The ring of invariant polynomials $H^*(BS)^W$ is well-known (see \cite{humphreys}, Ch.\ 3). For the exceptional groups, explicit polynomial generators for $H^*(BS)^W$ can be computed as follows. Let $K$ be one of exceptional compact Lie groups: $E_8, E_7, E_6, F_4, G_2$. Let $V$ be a fundamental representation of $K$ of minimal dimension. Denote $\la_1,\ld,\la_d\in H^1(S)$ the weights of this representation, so that $\tau(\la_i)\in H^2(BS)$. In \cite{mehta}, Mehta shows that power sums 
\[I_k=\sum_{i=1}^d \tau(\la_i)^k\] generate the invariant polynomials $H^*(BS)^W$. 
In the remainder of this section we recall the descriptions of $H^*(BS)^W$ for the different compact Lie groups, and we record explicit generators of $H^*(BS)^W$ that will be used in Sections \ref{sec:cw} and \ref{sec:computation}. 

To express the weights $\la_1,\ld,\la_d$ for the exceptional $K$, we use the descriptions from Adams \cite{adams}. We remark that our expressions for $I_k$ for $E_8, E_7, E_6,$ and $F_4$ agree with \cite{mehta} up to a chance of basis. Let $\sym(x_1,\ld,x_n)\sbs \q[x_1,\ld,x_n]$ denote the subring of symmetric polynomials. 

\vspace{.1in}

\noindent \un{$B\SO_n$}: let $k=\lfloor n/2\rfloor$. As a subring of $\q[y_1,\ld,y_k]$, 
\begin{equation*}
H^*(B\SO_n)\simeq H^*(BS)^W=\left\{
\begin{array}{clll}
\sym(y_1^2,\ld,y_k^2)&n=2k+1\\[2mm]
\lan\sym(y_1^2,\ld,y_k^2), y_1\cd y_k\ran& n=2k
\end{array}\right.
\end{equation*}


\noindent\un{$BU_n$}:
\[H^*(BU_n)\simeq H^*(BS)^W=\sym(y_1,\ld,y_n).\]
Note that $H^*(B\SU_n)$ is the quotient of $\sym(y_1,\ld,y_n)$ by the ideal generated by $y_1+\cd+y_n$. Similarly, $H^*\big(BS(U_p\ti U_q)\big)$ is the quotient of 
\[\sym(y_1,\ld, y_p)\ot\sym(z_1,\ld,z_q)\] by the ideal generated by $y_1+\cd+y_p+z_1+\cd+z_q$. 

\vspace{.1in}

\noindent\un{$B\SP_n$}: $H^*\big(B\SP(n)\big)\simeq H^*(BS)^W=\sym(y_1^2,\ld,y_n^2)$.

\vspace{.1in}

\noindent\un{$BE_8$}: Let $S\sbs E_8$ be a maximal torus with Weyl group $W$. Up to conjugation, we can assume $S\sbs\Spin_{16}\sbs E_8$ and that $S$ is a maximal torus of $\Spin_{16}$. This allows us to identify $H^1(S)\simeq\q\{J_1,\ld,J_8\}$ and $H^2(BS)\simeq\q[z_1,\ld,z_8]$ in such a way that the roots of $E_8$ are 
\begin{equation}\label{eqn:rootsE8}
\begin{gathered}
\left\{\begin{array}{ccc}\pm J_i\pm J_j&1\le i<j\le 8,\\[1mm] \fr{1}{2}(\pm J_1\pm\cd\pm J_8)&\text{ even number of $-$'s}\end{array}\right.
\end{gathered}
\end{equation}
See \cite{adams} (pg.\ 56). $W$ preserves the roots, and $H^*(BS)^W\simeq H^*(BE_8)$ is generated by 
\[I_{2k}=\sum_{1\le i<j\le 8}(z_i+z_j)^{2k}+(z_i-z_j)^{2k}+\sum_{\text{even $-$'s}}\fr{1}{2^{2k}}(z_1\pm\cd\pm z_8)^{2k}.\]
Moreover, according to \cite{mehta} (pg.\ 1088), $H^*(BS)^W\simeq\q[I_2,I_8,I_{12}, I_{14}, I_{18}, I_{20}, I_{24}, I_{30}]$. We record here that $I_2=30(z_1^2+\cd+z_8^2)$. 

\vspace{.1in}

\noindent\un{$BE_7$}: For any embedding $\SU_2\hra E_8$, the identity component of the centralizer of $\SU_2$ is isomorphic to $E_7$ (see \cite{adams}, pg.\ 49). Choose $\SU_2$ so that its roots are $\pm(J_7-J_8)$ in $E_8$ (cf.\ \ref{eqn:rootsE8}). Let $S\sbs E_7$ be a maximal torus with Weyl group $W$. Since the roots of $E_7$ are orthogonal to the roots of $\SU_2$, we can identify $H^1(S)\simeq\q\{J_1,\ld,J_6, J_7+J_8\}$. Let $z_i=\tau(J_i)$ in $H^2(BS)$ for $1\le i\le 6$ and let $z_7=\tau(J_7+J_8)$. 

Following \cite{adams} (pg.\ 52), the fundamental representation of $E_7$ is 56-dimensional, and by restricting this representation to $\mf{so}_{12}\ti\mf{su}_2\sbs\mf e_7$, one can compute the weights: 
\[\left\{\begin{array}{lll}
\pm J_i\pm (\fr{J_7+J_8}{2})&1\le i\le 6\\[1mm]
\fr{1}{2}(\pm J_1\pm\cd\pm J_6)& \text{odd number of $J_i$ have $-$ sign}
\end{array}\right.\]
$W$ preserves these weights, and $H^*(BS)^W\simeq H^*(BE_7)$ is generated by 
\[I_{2k}=\sum_{i=1}^6(\fr{z_7}{2}+z_i)^{2k}+(\fr{z_7}{2}-z_i)^{2k}+\fr{1}{2^{2k}}\sum(z_1\pm\cd\pm z_6)^{2k},\]
where the second sum is over all terms with an odd number of $-$ signs. According to \cite{mehta} (pg.\ 1086), $H^*(BS)^W\simeq \q[I_2, I_6, I_8, I_{10}, I_{12}, I_{14}, I_{18}]$. Note that for this description of $E_7$, 
\[I_2=6(z_1^2+\cd+z_6^2)+3z_7^2.\]

\vspace{.1in} 

\noindent\un{A second computation for $BE_7$}: We will also need the following description of $H^*(BE_7)$. This time choose $\SU_2\hra E_8$ so that its roots are $\pm(J_1+\cd+J_8)$. Let $S\sbs E_7$ be a maximal torus with Weyl group $W$. $H^1(S)$ is the orthogonal complement of $J_1+\cd+J_8$ in $\q\{J_1,\ld,J_8\}$. One can compute the weights (with respect to this description of $H^1(S)$) of the fundamental representation of $E_7$ by restricting to $\SU_8\sbs E_7$ (see \cite{adams}, pg.\ 69). The weights are
\[\pm\left(J_i+J_j-\fr{1}{4}(J_1+\cd+J_8)\right),\>\>\>\>\>1\le i<j\le 8.\] 
Let $z_i=\tau(J_i)\in H^2(BS)$ for $1\le i\le 8$. (Technically $J_i$ does not live in $H^1(S)$, so we really mean the restriction of the weight $J_i$ for $E_8$ to a weight for $E_7$.) $H^*(BS)^W\simeq H^*(BE_7)$ is generated by 
\[I_{2k}=\sum_{1\le i<j\le 8}\left((z_i+z_j)-\fr{1}{4}(z_1+\cd+z_8)\right)^{2k}.\]
As above, $H^*(BS)^W=\q[I_2,I_6, I_8,I_{10},I_{12}, I_{14}, I_{18}]$, and one computes
\[I_2= \fr{3}{4}\left[7\left(\sum_{1\le i\le8} z_i^2\right)+2\left(\sum_{1\le i<j\le 8} z_iz_j\right)\right]\]

\vspace{.1in}
\noindent\un{$BE_6$}: For any embedding $\SU_3\hra E_8$, the identity component of the centralizer of $\SU_3$ is isomorphic to $E_6$ (see \cite{adams}, pg.\ 49). Choose $\SU_3$ so that its roots are $\pm (J_6-J_7)$, $\pm (J_6-J_8)$, and $\pm (J_7-J_8)$ (cf.\ \ref{eqn:rootsE8}). Let $S\sbs E_6$ be a maximal torus with Weyl group $W$. Since the roots of $E_6$ are orthogonal to the roots of $\SU_3$, we can identify 
\[H^1(S)\simeq\q\{J_1,\ld,J_5, J_6+J_7+J_8\}.\] Let $z_i=\tau(J_i)\in H^2(BS)$ for $1\le i\le 5$, and let $z_6=\tau(J_6+J_7+J_8)$. 

Lie$(E_6)=\mf e_6$ has two 27-dimensional fundamental representations $U_1,U_2$. One can compute the weights of $U_i$ by restricting to $\mf{so}_{10}\ti\mf{so}_2\sbs\mf e_6$. Following \cite{adams} (pg.\ 53), $U_1=\xi^{-4}+\la_{10}^1\ot\xi^2+\De^+\ot\xi^{-1}$ and its weights are
\[\left\{\begin{array}{lll}
\pm J_i+\fr{1}{3}(J_6+J_7+J_8)& 1\le i\le 5\\[2mm]
\fr{1}{2}\left(\pm J_1\pm\cd\pm J_5-\fr{1}{3}(J_6+J_7+J_8)\right)&\text{even number of $J_i$ have $-$ sign}\\[2mm]
-\fr{2}{3}(J_6+J_7+J_8)
\end{array}\right.\]
$W$ preserves these weights, and $H^*(BS)^W\simeq H^*(BE_6)$ is generated by 
\begin{equation}\label{eqn:invarE6}I_k=\left(-\fr{2}{3}z_6\right)^k+\sum_{i=1}^5\left(\fr{1}{3}z_6+z_i\right)^k+\left(\fr{1}{3}z_6-z_i\right)^k+\fr{1}{2^k}\sum \left(\pm z_1\pm\cd\pm z_5-\fr{1}{3}z_6\right)^k.\end{equation}
In particular, $I_1=I_3=0$ and $I_2=6 (z_1^2 + z_2^2 + z_3^2 + z_4^2 + z_5^2) + 2 z_6^2$ and  $I_4=\fr{1}{12}(I_2)^2$. According to \cite{mehta} (pg.\ 1086), $H^*(BS)^W\simeq\q[I_2,I_5, I_6, I_8, I_9, I_{12}]$.

\vspace{.1in}

\noindent\un{A second computation for $BE_6$}: We will also need the following description of $H^*(BE_6)$. This time choose $\SU_3\hra E_8$ so that the roots are 
\begin{equation}\label{eqn:rootsE6}\left\{\pm\fr{J_1+\cd+J_6}{2}\pm\fr{J_7+J_8}{2}, 
\>\>\>\>\pm (J_7+J_8)\right\}.\end{equation}
Let $S\sbs E_6$ be a maximal torus with Weyl group $W$. $H^1(S)$ is the subspace of $\q\{J_1,\ld,J_8\}$ that is orthogonal to the roots in (\ref{eqn:rootsE6}). 

For this description of $\mf e_6$, we compute the weights of the fundamental representations $U_1,U_2$ by restricting to $\mf{su}_6\ti\mf{su}_2\sbs\mf e_6$. As a $(\mf{su}_6\ti\mf{su}_2)$-representation,
\[U_1=\la_6^2+\la_6^5\ot\la_2^1\>\>\>\>\text{ and }\>\>\>\>U_2=\la_6^4+\la_6^1\ot\la_2^1,\] 
where $\la_m^i$ denotes the $i$-exterior power of the standard representation of $\mf{su}_m$. Since a Cartan subalgebra for $\mf{su}_6\ti\mf{su}_2$ is also a Cartan subalgebra for $\mf e_6$, we can identify the weights of $U_i$ as a $\mf{su}_6\ti\mf{su}_2$-representation with the weights of $U_i$ as a $\mf e_6$-representation. 

Let $u=\fr{J_1+\cd+J_6}{2}+\fr{J_7+J_8}{2}$ and $v=\fr{J_1+\cd+J_6}{2}-\fr{J_7+J_8}{2}$. One computes that the weights of $U_1$ are 
\[\begin{array}{cccc}
J_i+J_j-\fr{1}{3}(u+v) &1\le i<j\le 6& \text{(corresponding to $\la_6^2$)}\\[2mm]
J_i\pm \fr{J_7-J_8}{2}-\fr{5}{6}(u+v) &1\le i\le 6& \text{(corresponding to $\la_6^5\ot\la_2^1$)}
\end{array}
\]
$W$ preserves these weights. Let $z_i=\tau(J_i)\in H^2(BS)$ for $1\le i\le 6$ and let $z_7=\tau(J_7-J_8)$. (As above, by $J_i$ we really mean the restriction of the weight for $E_8$ to a weight for $E_7$.) With respect to this basis $H^*(BS)^W\simeq H^*(BE_6)$ is generated by 
\[I_{k}= \sum_{1\le i<j\le 6}\left(z_i+z_j-\fr{1}{3}(z_1+\cd+z_6)\right)^k+\sum_{\substack{1\le i\le 6\\\ep\in\{\pm1\}}}\left(z_i-\fr{5}{6}(z_1+\cd+z_6)+\fr{\ep}{2}z_7
\right)^k\]
As above, $H^*(BS)^W\simeq\q[I_2,I_5, I_6, I_8, I_9, I_{12}]$, and one computes 
\[I_2=5(z_1^2+\cd+z_6^2)+3z_7^2-2\left(\sum_{1\le i<j\le 6}z_iz_j\right)
\]

\vspace{.1in}

\noindent\un{$BF_4$}: Let $S\sbs F_4$ be a maximal torus with Weyl group $W$. Up to conjugation we can assume $S\sbs \Spin_9\sbs F_4$ and that $S$ is a maximal torus of $\Spin_9$ (see \cite{adams}, pg.\ 53). This allows us to identify $H^1(S)\simeq\q\{L_1,\ld,L_4\}$ and $H^2(BS)\simeq\q[z_1,\ld,z_4]$. 
The 26-dimensional fundamental representation of $F_4$ has the following nonzero weights (these are the short roots of $F_4$, see \cite{adams} pg.\ 55). 
\[\left\{\begin{array}{cll}
\pm L_i&1\le i\le 4\\
\fr{1}{2}(\pm L_1\pm L_2\pm L_3\pm L_4)\end{array}\right.\]
$W$ preserves these weights, and $H^*(BS)^W\simeq H^*(BF_4)$ is generated by
\begin{equation}\label{eqn:invarF4}I_{2k}=\sum_{i=1}^4z_i^{2k}+\fr{1}{2^{2k}}\sum(\ep_1z_1+\ep_2z_2+\ep_3z_3+z_4)^{2k}.\end{equation}
The second sum is over all tuples $(\ep_1,\ep_2,\ep_3)\in\{\pm1\}^3$. According to \cite{mehta} (pg.\ 1091), $H^*(BS)^W=\q[I_2,I_6, I_8, I_{12}]$. Note that $I_2=3(z_1^2+z_2^2+z_3^2+z_4^2)$.

\vspace{.1in} 

\noindent\un{$BG_2$}: Let $S\sbs G_2$ be a maximal torus (its dimension is 2). Let $J_1,J_2\in H^1(S)$ be simple roots for $G_2$. Denote $z_i=\tau(J_i)\in H^2(BS)$. The Weyl group $W$ is the dihedral group of order 12 and it permutes the nonzero weights of the 7-dimensional fundamental representation. These weights are 
\[\big\{\pm J_1,\>\pm (J_1+J_2),\> \pm (2J_1+J_2)\big\}.\]
See \cite{fh} (Lecture 22), for example. $H^*(BS)^W$ is generated the polynomials 
\[I_{2k}=z_1^{2k}+(z_1+z_2)^{2k}+(2z_1+z_2)^{2k}.\]
According to \cite{mehta} (pg.\ 1094) $H^*(BS)^W\simeq\q[I_2,I_6]$. Note that  \[I_2=2(3z_1^2+3z_1z_2+z_2^2).\] 

\section{Computing $\al_1^*:H^*\big(B\homeo(S^{n-1})\big)\ra H^*(BG)$}\label{sec:isotropy1}

Since the inclusion $K\hra G$ is a homotopy equivalence, it induces an isomorphism $H^*(BG)\xra\sim H^*(BK)$. To understand $\al_1^*$, we study $\al_1\rest{}{K}:K\ra\homeo(S^{n-1})$. 

Let $T_{eK}^1(G/K)$ be the space of rays through the origin in $T_{eK}(G/K)$. The action of $K$ on $G/K$ induces an action on $T_{eK}^1(G/K)$. The exponential map defines a $K$-equivariant homeomorphism 
\[s: T_{eK}^1(G/K)\ra \pa(G/K).\]
The $K$-action on $T_{eK}(G/K)$ can be described as follows. The adjoint action of $K$ on $\mf g=\text{Lie}(G)$ decomposes into invariant subspaces $\mf k\op\mf p$, where $\mf k=\text{Lie}(K)$ and $\mf p\simeq T_{eK}(G/K)$. This implies the following lemma. For more details, see \cite{tshishiku1}.

\begin{lem}\label{lem:linear}
The action of $K\sbs G$ on $\pa (G/K)$ is induced by a linear representation $\iota: K\ra\emph{\aut}(\mf p)$.
\end{lem}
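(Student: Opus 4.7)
The plan is to combine the two observations already in place: the $K$-equivariant homeomorphism $s: T^1_{eK}(G/K) \ra \pa(G/K)$ coming from the exponential map, and the identification of the isotropy $K$-action on $T_{eK}(G/K)$ with the adjoint action on $\mf p$. Define $\iota: K \ra \aut(\mf p)$ to be the restriction of the adjoint representation $\Ad: K \ra \aut(\mf g)$; this is well-defined because the Cartan decomposition $\mf g = \mf k \op \mf p$ is $\Ad(K)$-stable (the bracket relations $[\mf k, \mf k] \sbs \mf k$ and $[\mf k, \mf p] \sbs \mf p$ imply this on the Lie algebra level, and $K$ is connected in this argument — or one can use the standard fact that any $K$-invariant complement to $\mf k$ in $\mf g$ with respect to the Killing form is preserved by $\Ad(K)$). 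Since $\iota$ is a linear representation on the finite-dimensional vector space $\mf p$, it restricts to an action on the unit sphere in $\mf p$ with respect to any $K$-invariant inner product, for example the restriction of the Killing form to $\mf p$.

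Next I would verify that under the standard identification $\mf p \simeq T_{eK}(G/K)$ — given by the differential at $0$ of the orbit map $X \mapsto \exp(X)K$ — the representation $\iota$ agrees with the linearization at $eK$ of the $K$-action on $G/K$. This is the content of the decomposition sentence preceding the lemma; concretely, for $k \in K$ and $X \in \mf p$ one has $k\exp(X)K = \exp(\Ad(k)X)K$, and differentiating at $0$ in $X$ gives the asserted identification. Because $K$ acts on $G/K$ by isometries fixing $eK$, the geodesic $\gamma_v(t) = \exp_{eK}(tv)$ satisfies $k \ci \gamma_v(t) = \gamma_{\iota(k)v}(t)$ by uniqueness of geodesics in a Riemannian manifold, so $\exp_{eK}$ is $K$-equivariant and so is its restriction to unit spheres.

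Composing with the $K$-equivariant homeomorphism $s$ gives a $K$-equivariant homeomorphism from the unit sphere in $\mf p$ to $\pa(G/K)$ that intertwines the $K$-action on $\pa(G/K)$ with the restriction of the linear representation $\iota$ to that unit sphere; this is exactly the conclusion of the lemma. There is no real obstacle here — the argument is a short unwinding of definitions — and the only substantive inputs are the Cartan decomposition and the fact that on a Hadamard manifold the visual boundary is canonically identified with the unit tangent sphere at any basepoint via the exponential, both of which are standard for symmetric spaces of noncompact type.
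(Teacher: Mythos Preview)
Your proposal is correct and follows essentially the same approach as the paper: the paper's argument is the paragraph immediately preceding the lemma, which invokes the $K$-equivariant homeomorphism $s: T^1_{eK}(G/K)\ra\pa(G/K)$ from the exponential map together with the identification of the $K$-action on $T_{eK}(G/K)$ with the adjoint action on $\mf p$, and then defers further details to \cite{tshishiku1}. You have simply spelled out those details (the formula $k\exp(X)K=\exp(\Ad(k)X)K$, the $K$-equivariance of geodesics, etc.), which is exactly what is needed.
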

We refer to the representation $\iota:K\ra\aut(\mf p)\simeq\gl_n(\re)$ as the \emph{isotropy representation}. Let $r:\gl_n(\re)\ra\homeo(S^{n-1})$ be the $\gl_n(\re)$-action on the space of rays through the origin in $\re^n$. By Lemma \ref{lem:linear}, $\al_1\rest{}{K}=r\circ\iota$.
Since the map 
\[r^*:H^*\big(B\homeo(S^{n-1})\big)\ra H^*(B\gl_n(\re))\]
is understood via Diagram (\ref{diag:ratsurj}), it remains only to understand
\[\iota^*:H^*\big(B\gl_n(\re)\big)\ra H^*(BK).\]
As described in Section \ref{sec:chern}, $\iota^*$ can be computed using the weights of $\iota$. This will be carried out in Sections \ref{sec:isotropySLn} $-$ \ref{sec:isotropyG2} as follows.  
Let $S\sbs K$ be a maximal torus. The isotropy representation $\iota:K\ra\aut(\mf p)$ is a real representation, so we complexify to get a representation $\iota_\co:K_\co\ra\aut(\mf p_\co)$. On the maximal abelian subgroup $H\sbs K_\co$, we obtain weights $\la_i: H\ra\co^\ti$. To compute the weights explicitly, we pass to the Lie algebra $\mf h$ of $H$ and view the weights as elements of $\mf h^*=\Hom(\mf h,\co)$. After computing the weights of $\iota_\co$, we use Equation (\ref{eqn:totalchern}) to express the total Chern class $c(\iota_\co)$ as a polynomial in $H^*(BH)$. Finally, since $S\hra H$ induces an isomorphism $H^*(BH)\ra H^*(BS)$, we obtain the Pontryagin classes $p_i(\iota)=(-1)^ic_{2i}(\iota_\co)$ as polynomials in $H^*(BS)^W\simeq H^*(BK)\simeq H^*(BG)$. This computes $\al_1^*$ since $\al_1^*(q_i)=p_i(\iota)$. 

Below, $V_n$ will denote the standard representation of $\mf{gl}_n(\co)$, $\mf{sl}_n(\co)$, $\mf{so}_n(\co)$, or $\mf{sp}_{2k}(\co)$ (for $n=2k$). 


\subsection{Isotropy representation for $\Sl_n(\re)$}\label{sec:isotropySLn} Upon complexification, we need to study the isotropy representation of $\SO_n(\co)\sbs\Sl_n(\co)$. The adjoint representation of $\mf{sl}_n(\co)$ is isomorphic to the kernel of the contraction $V_n\ot V_n^*\ra\co$. As a $\mf{so}_n(\co)$-representation,
\[V_n\ot V_n^*\simeq\Lambda^2(V_n)+\sym^2(V_n).\]
$\Lambda^2(V_n)$ is the adjoint representation of $\mf{so}_n(\co)$. The representation $\sym^2(V_n)$ is not irreducible because $\mf{so}_n(\co)$ preserves a symmetric bilinear form $B:\sym^2(V_n)\ra\co$. The kernel of $B$ is the isotropy representation $\mf p$. 

There is a standard form $B$ for which the diagonal subgroup $H\sbs\SO(B)\simeq\SO_n(\co)$ is a Cartan subgroup (see \cite{fh}, pg.\ 268). For this choice, we have a standard basis $\mf h^*=\lan L_1,\ld,L_k\ran$, where $k=[n/2]$. If $n$ is even, the weights of $\mf p$ are $\pm L_i\pm L_j$ for $1\le i,j\le k$. If $n$ is odd, we have the additional weights $\pm L_i$ for $1\le i\le k$. 

As elements of $H^2(BH)\simeq\q[y_1,\ld,y_k]$, we have the following the total Chern classes for the isotropy representation. If $n$ is even, then
\begin{equation}\label{eqn:chernSL}c(\iota_\co)=\prod_{i,j}(1+y_i+y_j)(1+y_i-y_j)(1-y_i+y_j)(1-y_i-y_j)\end{equation}
and if $n$ is odd, then
\begin{equation}\label{eqn:chernSL2}c(\iota_\co)=\prod_i(1-y_i)(1+y_i)\prod_{i,j}(1+y_i+y_j)(1+y_i-y_j)(1-y_i+y_j)(1-y_i-y_j).\end{equation}

\subsection{Isotropy representation for $\SU_{p,q}$}\label{sec:isotropySUpq} 
Let $n=p+q$. Upon complexification we need to study the isotropy representation of $K_\co\sbs\Sl_n(\co)$, where $K_\co$ is the block diagonal subgroup 
\[K_\co=(\gl_p(\co)\ti\gl_q(\co))\cap\Sl_{n}(\co).\]
As described in Section \ref{sec:isotropySLn}, the adjoint representation of $\mf{sl}_n(\co)$ is a subspace of $V_n\ot V_n^*$. As a $\mf k_\co$-representation 
\[V_n\ot V_n^*=\big(V_p\ot V_p^*+V_q\ot V_q^*\big)+\big(V_1\ot V_2^*+V_1^*\ot V_2\big).\]
The adjoint representation of $\mf k_\co$ is a codimension-1 subspace of the first summand. The second summand is the isotropy representation $\mf p$. 

The diagonal subgroup $H\sbs K_\co$ coincides with the diagonal subgroup of $\Sl_n(\co)$, so we identify $\mf h^*$ as the quotient of $\lan L_1,\ld,L_n\ran$ by the subspace generated by $L_1+\cd+L_n$. The weights of the isotropy representation are $\pm(L_i-L_j)$ for $1\le i\le p$ and $p+1\le j\le p+q$. 

As an element of $H^2(BH)\sbs\q[y_1\ld,y_p,z_1,\ld,z_q]$, the total Chern class for the complexified isotropy representation is 
\begin{equation}\label{eqn:chernSU}c(\iota_\co)=\prod_{\substack{1\le i\le p\\1\le j\le q}}\big(1+(y_i-z_j)\big)\big(1-(y_i-z_j)\big)=\prod_{\substack{1\le i\le p\\1\le j\le q}}\big(1-(y_i-z_j)^2\big).\end{equation}

\subsection{Isotropy representation for $\SP_{2n}(\re)$}\label{sec:isotropySP2n} Upon complexification we need to study the isotropy representation of $\gl_n(\co)\sbs\SP_{2n}(\co)$, where $\gl_n(\co)$ is the subgroup of matrices of the form $\left(\begin{array}{cc}A&\\&(A^t)^{-1}\end{array}\right)$ for $A\in\gl_n(\co)$. 

As a $\mf{gl}_n(\co)$-representation, the standard representation of $\mf{sp}_{2n}(\co)$ decomposes $V_{2n}= V_n+V_n^*$. The adjoint representation of $\mf{sp}_{2n}(\co)$ is isomorphic to $\sym^2(V)$, and as a $\mf{gl}_n(\co)$-representation 
\[\sym^2(V)\simeq \big(V_n\ot V_n^*\big)+\big(\sym ^2(V_n)+\sym^2(V_n^*)\big).\]
$V_n\ot V_n^*$ is the adjoint representation of $\mf{gl}_n(\co)$ and $\sym ^2(V_n)+\sym^2(V_n^*)$ is the isotropy representation. 

Let $H\sbs\gl_n(\co)$ be the diagonal subgroup and identify $\mf h^*=\lan L_1,\ld,L_n\ran$ using the standard basis. The weights of the isotropy representation are $\pm (L_i+L_j)$ for $1\le i,j\le n$. As an element of $H^2(BH)\simeq\q[y_1,\ld,y_n]$, the total Chern class of the complexified isotropy representation is 
\begin{equation}\label{eqn:chernSP}
\begin{gathered}
\begin{array}{lll}
c(\iota_\co)&=&\prod_{i<j}(1+y_i+y_j)(1-(y_i+y_j))\prod_{i}(1+2y_i)(1-2y_i)\\[2mm]
&=&\prod_{i<j}\big(1-(y_i+y_j)^2\big)\prod_i\big(1-4y_i^2\big).
\end{array}\end{gathered}\end{equation}

\subsection{Isotropy representation for $\SO_{p,q}$}\label{sec:isotropySOpq} Let $n=p+q$ and $a=[p/2]$ and $b=[q/2]$. Upon complexification, we need to study the isotropy representation of $\SO_p(\co)\ti\SO_q(\co)\sbs\SO_n(\co)$. 

The adjoint representation of $\mf{so}_n(\co)$ is $\Lambda^2(V_n)$, and as a $\mf{so}_p(\co)\ti\mf{so}_q(\co)$-representation,
\[\Lambda^2(V_n)\simeq\big(\Lambda^2(V_p)+\Lambda^2(V_q)\big)+V_p\ot V_q.\]
$\Lambda^2(V_p)+\Lambda^2(V_q)$ is the adjoint representation of $\mf{so}_p(\co)\ti\mf{so}_q(\co)$, and $V_p\ot V_q$ is the isotropy representation. 

Let $H\sbs\SO_p(\co)\ti\SO_q(\co)$ be the standard Cartan subgroup, and identify $\mf h^*=\lan L_1,\ld,L_a, L_{a+1},\ld,L_{a+b}\ran$. In this basis, the weights of the isotropy representation are 
\[\left\{
\begin{array}{clll}
p,q\text{ even}&\pm L_i\pm L_j \\[1mm]
p\text{ even}, q\text{ odd}&\pm L_i\pm L_j,\>\>\> \pm L_i \\[1mm]
p\text{ odd}, q\text{ even}&\pm L_i\pm L_j,\>\>\> \pm L_j\\[1mm]
p\text{ odd}, q\text{ odd}&\pm L_i\pm L_j,\>\>\> \pm L_i,\>\>\>\pm L_j
\end{array}\right.\]
where $1\le i\le a$ and $a+1\le j\le a+b$.

As an element of $H^*(BH)\simeq\q[y_1,\ld,y_a,z_1,\ld,z_b]$, we have the following total Chern classes. If $p$ and $q$ are both even, then
\begin{equation}\label{eqn:chernSO1}
\begin{gathered}
\begin{array}{lll}
c(\iota_\co)&=&\prod (1+y_i+z_j)(1+y_i-z_j)(1-y_i+z_j)(1-y_i-z_j)\\[2mm]
&=&\prod_{\substack{1\le i\le a\\1\le j\le b}} \big(1-(y_i+z_j)^2\big)\big(1-(y_i-z_j)^2\big).
\end{array}\end{gathered}\end{equation}
If $p$ is even and $q$ is odd, then
\begin{equation}\label{eqn:chernSO2}c(\iota_\co)=\prod_{i=1}^a(1-y_i^2)\prod_{\substack{1\le i\le a\\1\le j\le b}} \big(1-(y_i+z_j)^2\big)\big(1-(y_i-z_j)^2\big).\end{equation}
If $p$ is odd and $q$ is even, then
\begin{equation}\label{eqn:chernSO3}c(\iota_\co)=\prod_{i=1}^b(1-z_j^2)\prod_{\substack{1\le i\le a\\1\le j\le b}} \big(1-(y_i+z_j)^2\big)\big(1-(y_i-z_j)^2\big).\end{equation}
If $p$ and $q$ are both odd, then
\begin{equation}\label{eqn:chernSO4}c(\iota_\co)=\prod_{i=1}^a(1-y_i^2)\prod_{j=1}^b(1-z_j^2)\prod_{\substack{1\le i\le a\\1\le j\le b}} \big(1-(y_i+z_j)^2\big)\big(1-(y_i-z_j)^2\big).\end{equation}

\subsection{Isotropy representation for $\SP_{p,q}$}\label{sec:isotropySPpq} Let $n=p+q$. After complexifying, we need to study the isotropy representation of $\SP_{2p}(\co)\ti\SP_{2q}(\co)\sbs\SP_{2n}(\co)$. 

The adjoint representation of $\mf{sp}_{2n}(\co)$ is $\sym^2(V_{2n})$, and as a $\mf{sp}_{2p}(\co)\ti\mf{sp}_{2q}(\co)$-representation, 
\[\sym^2(V_{2n})\simeq \big(\sym^2(V_{2p})+\sym^2(V_{2q})\big)+V_{2p}\ot V_{2q}.\]
$\sym^2(V_{2p})+\sym^2(V_{2q})$ is the adjoint representation of $\mf{sp}_{2p}(\co)\ti\mf{sp}_{2q}(\co)$, and $V_{2p}\ot V_{2q}$ is the isotropy representation. 

Let $H\sbs\SP_{2p}(\co)\ti\SP_{2q}(\co)$ be the standard Cartan subgroup, and identify 
$\mf h^*=\lan L_1,\ld,L_p,L_{p+1},\ld,L_{p+q}\ran$. The weights of the isotropy representation are $\pm L_i\pm L_j$, where $1\le i\le p$ and $2p+1\le j\le  2p+q$.
As an element of $H^2(BH)\simeq\q[y_1,\ld,y_p,z_1,\ld,z_q]$, the total Chern class of the complexified isotropy representation is 
\begin{equation}\label{eqn:chernSPpq}c(\iota_\co)=\prod_{\substack{1\le i\le p\\1\le j\le q}}(1+y_i+z_j)(1+y_i-z_j)(1-y_i+z_j)(1-y_i-z_j).\end{equation}

\subsection{Isotropy representation for $\SO^*_{2n}$}\label{sec:isotropySO*} Define $\SO^*_{2n}$ as the subgroup of $\gl_{2n}(\co)$ that preserves the Hermitian form and bilinear form defined by 
\[I_{n,n}=\left(\begin{array}{cc}I_n&\\&-I_n\end{array}\right)\>\>\>\>\>\text{ and }\>\>\>\>\>B_n=\left(\begin{array}{cc}0&I_n\\I_n&0\end{array}\right),\]respectively. We complexify and study the isotropy representation of $\gl_n(\co)\sbs\SO_{2n}(\co)$, where $\gl_n(\co)$ is the subgroup of matrices of the form $\left(\begin{array}{cc}A&\\&(A^t)^{-1}\end{array}\right)$ for $A\in\gl_n(\co)$. 

As a $\mf{gl}_n(\co)$-representation, the standard representation of $\mf{so}_{2n}(\co)$ decomposes $V_{2n}=V_n+V_n^*$. The adjoint representation of $\mf{so}_{2n}(\co)$ is $\Lambda^2(V_{2n})$, and as a $\mf{gl}_n(\co)$-representation, 
\[\Lambda^2(V_{2n})\simeq V_n\ot V_n^*+\big(\Lambda^2(V_n)+\Lambda^2(V_n^*)\big).\]
$V_n\ot V_n^*$ is the adjoint representation of $\mf{gl}_n(\co)$, and $\Lambda^2(V_n)+\Lambda^2(V_n^*)$ is the isotropy representation. 

Let $H\sbs\gl_n(\co)$ be the diagonal subgroup and identify $\mf h^*=\lan L_1,\ld,L_n\ran$. The weights of the isotropy representation are $\pm (L_i+L_j)$ for $1\le i<j\le n$. As an element of $H^2(BH)\simeq\q[y_1,\ld,y_n]$, the total Chern class is 
\begin{equation}\label{eqn:chernSO*}c(\iota_\co)=\prod_{i<j}\big(1+(y_i+y_j)\big)\big(1-(y_i+y_j)\big)=\prod_{i<j}\big(1-(y_i+y_j)^2\big).\end{equation}

\subsection{Isotropy representation for $\SU^*_{2n}$}\label{sec:isotropySU*} After complexifying, we need to study the isotropy representation of $\SP_{2n}(\co)\sbs\Sl_{2n}(\co)$. 

As described in Section \ref{sec:isotropySLn}, the adjoint representation of $\mf{sl}_{2n}(\co)$ is contained in $V_{2n}\ot V_{2n}^*$. As a $\mf{sp}_{2n}(\co)$-representation
\[V_{2n}\ot V_{2n}^*\simeq \sym^2(V_{2n})+\Lambda^2(V_{2n}).\]
$\sym^2(V_{2n})$ is the adjoint representation of $\mf{sp}_{2n}(\co)$. The representation $\Lambda^2(V_{2n})$ is not irreducible because by definition $\mf{sp}_{2n}(\co)$ preserves an antisymmetric bilinear form $J:\Lambda^2(V_{2n})\ra\co$. The kernel of $J$ is the isotropy representation $\mf p$. 

Let $H\sbs\SP_{2n}(\co)$ be the standard Cartan subgroup, and identify $\mf h^*=\lan L_1,\ld,L_n\ran$. The nonzero weights of the isotropy representation are $\pm L_i\pm L_j$ for $1\le i<j\le n$. As an element of $H^2(BH)$, the total Chern class of the isotropy representation is
\begin{equation}\label{eqn:chernSU*}c(\iota_\co)=\prod_{1\le i<j\le n}(1+y_i+y_j)(1+y_i-y_j)(1-y_i+y_j)(1-y_i-y_j).\end{equation}

\subsection{Isotropy representation for real forms of $E_8$}\label{sec:isotropyE8} 
Let $\mf h^*(\mf e_8)$ be the dual to the Cartan subalgebra of $\mf e_8$. As in \cite{adams} (pg.\ 56), we identify $\mf h_8^*=\lan J_1,\ld,J_8\ran$. 

In the remainder of this section, we use $\la_n^i$ to denote the $i$-th exterior power of the standard representations of the real Lie algebras $\mf{su}_n$, $\mf{so}_n$, and $\mf{sp}_n$. 

\vspace{.1in}

\noindent\un{$E_{8(8)}$.} The maximal compact subgroup $K\sbs E_{8(8)}$ has Lie algebra $\mf{so}_{16}$. As an $K$-representation, $\mf e_8$ decomposes 
\[\mf e_8=\mf{so}_{16}\op\De^+\]
where $\De^+$ is the positive spin representation of $\mf{so}_{16}$ (see \cite{adams}, chapter 6). Then the isotropy representation of $K\sbs E_8$ is the spin representation. Let $\mf h^*(\mf{so}_{16})$ denote the dual to the Cartan subalgebra of $\mf{so}_{16}$. We identify $\mf h^*(\mf{so}_{16})=\lan L_1,\ld, L_8\ran$. The weights of the isotropy representation are
\[\fr{1}{2}(\pm L_1\pm L_2\pm\cd\pm L_8)\]
where the number of + signs is even. 

Let $S\sbs K$ be a maximal torus. As an element of $H^*(BS)\simeq \q[y_1,\ld,y_8]$, the total Chern class of the complexified isotropy representation is 
\begin{equation}\label{eqn:chernE88}c(\iota_\co)=\prod\left(1-\fr{1}{2}(\ep_1y_1+\cd+\ep_8y_8)\right),\end{equation}
where the product is over all tuples $(\ep_1,\ld,\ep_8)\in\{\pm1\}^8$ that have an even number of $-$'s. 

\vspace{.1in}

\noindent\un{$E_{8(-24)}$.} The maximal compact subgroup $K\sbs E_{8(-24)}$ has Lie algebra $\mf e_7\ti\mf{su}_2$. As a $K$-representation, $\mf e_8$ decomposes
\[\mf e_8=(\mf e_7\>\op\>\mf{su}_2)\>\op\> (V\ot\la^1_2),\]
where $V$ is the 56-dimensional representation of $E_7$. See \cite{adams}, pg.\ 54.

Since $K\sbs E_8$ have the same rank, we can identify the (dual) Cartan subalgebras $\mf h^*(\mf e_8)=\lan J_1,\ld,J_8\ran$ and $\mf h^*(K)=\lan J_1,\ld,J_6,\fr{J_7+J_8}{2}\ran\op\lan\fr{J_7-J_8}{2}\ran$.

The roots for $\mf e_7\ti\mf{su}_2$ are $\pm(J_7-J_8)$ together with all the roots of $\mf e_8$ that are orthogonal to $J_7-J_8$. The roots of $\mf e_8$ that are not roots of $\mf e_7\ti\mf{su}_2$ correspond to the weights of the isotropy representation $\mf p=V\ot\la^1_2$. Then the weights of $\mf p$ are 
\[\left\{\begin{array}{cccl}
\pm J_i\pm (\fr{J_7+J_8}{2})\pm(\fr{J_7-J_8}{2})&1\le i\le 6\\[2mm]
\fr{\pm J_1\pm\cd\pm J_6}{2}\pm\fr{J_7-J_8}{2}
&\text{even number of $J_i$ with $-$ sign}
\end{array}\right.\]

Let $S\sbs K$ be a maximal torus. Define $y_i=\tau(J_i)\in H^2(BS)$ for $1\le i\le 6$ and $y_7=\tau(J_7+J_8)$ and $y_8=\tau(J_7-J_8)$. The total Chern class for the complexified isotropy representation $c(\iota_\co)\in H^*(BS)^W$ is 
\begin{equation}\label{eqn:chernE824}
c(\iota_\co)=\prod\left(1-\big(\ep y_i+\ep_7\cdot\fr{y_7}{2}+\ep_8\cdot\fr{y_8}{2}\big)\right)\prod\left(1-\fr{1}{2}\big(\ep_1y_1+\cd+\ep_6y_6+\ep_8y_8\big)\right)
\end{equation}
The first product is over $i=1,\ld,6$ and all tuples $(\ep,\ep_7,\ep_8)\in\{\pm1\}^3$. The second product is over the tuples $(\ep_1,\ld,\ep_6,\ep_8)\in\{\pm1\}^7$ with an even number of $-1$'s. 

\subsection{Isotropy representation for real forms of $E_7$}\label{sec:isotropyE7}\mbox{ }

\vspace{.1in} 

\noindent\un{$E_{7(7)}$.} The maximal compact subgroup $K\sbs E_{7(7)}$ has Lie algebra $\mf{su}_8$. Following \cite{adams} (pg.\ 69), as a $K$-representation, $\mf e_7$ decomposes
\[\mf e_7=\mf{su}_8\op\lambda^4_8.\]
Here $\mf p=\lambda^4_8$ is the isotropy representation, and the weights are $L_{i_1}+L_{i_2}+L_{i_3}+L_{i_4}$ for $1\le i_1<\cd<i_4\le 8$. 

Let $S\sbs K$ be a maximal torus. We identify $H^*(BS)$ with the quotient of $\q[y_1,\ld,y_8]$ by the ideal generated by $y_1+\cd+y_8$. The total Chern class of the complexified isotropy representation is 
\begin{equation}\label{eqn:chernE77}c(\iota_\co)=\prod_{1\le i_1<i_2<i_3<i_4\le 8}\big(1-(y_{i_1}+y_{i_2}+y_{i_3}+y_{i_4})\big).\end{equation}

\vspace{.1in}

\noindent\un{$E_{7(-5)}$.} The maximal compact subgroup $K\sbs E_{7(-5)}$ has Lie algebra $\mf{so}_{12}\ti\mf{su}_2$. Following \cite{adams} (pg.\ 52), $\mf e_7$ decomposes as a $K$-representation as
\[\mf e_7=(\mf{so}_{12}\op\mf{su}_2)\>\op\>\De^+\ot\lambda^1_2\]
where $\De^+$ is the positive spin representation of $K$. Here $\mf p=\De^+\ot\lambda^1$ is the isotropy representation. Identify $\mf h^*(K)\simeq\lan L_1,\ld,L_6\ran\op\lan L_7\ran$ in the standard way. The weights of the isotropy representation are $\fr{1}{2}\big(\ep_1 L_1+\cd+\ep_6 L_6)\pm L_7$, where $(\ep_1,\ld,\ep_6)\in\{\pm1\}^6$ has an even number of $-1$'s. 

Let $S\sbs K$ be a maximal torus. As an element of $H^*(BS)\simeq\q[y_1,\ld,y_6]\ot\q[y_7]$, the total Chern class for the complexified isotropy representation is 
\begin{equation}\label{eqn:chernE75}c(\iota_\co)=\prod\left(1-\fr{1}{4}(\ep_1y_1+\cd+\ep_6y_6+y_7)^2\right),\end{equation}
where $(\ep_1,\ld,\ep_6)\in\{\pm1\}^6$ has an even number of $-1$'s. 

\vspace{.1in} 

\noindent\un{$E_{7(-25)}$.} The maximal compact subgroup $K\sbs E_{7(-25)}$ has Lie algebra $\mf e_6\ti\mf{so}_2$. In a similar way to \cite{adams} (pg.\ 52), one shows that $\mf e_7$ decomposes as 
 a $K$-representation:
\[\mf e_7=(\mf e_6\op\mf{so}_2)\>\op \>U_1\ot\xi^2\op U_2\ot\xi^{-2}\]
where $U_1$ and $U_2$ are the 27-dimensional representations of $E_6$. Identify  
\[\mf h^*(\mf e_6)=\lan J_1,\ld,J_5, J_6+J_7+J_8\ran\>\>\>\>\>\text{ and }\>\>\>\>\>\mf h^*(\mf e_7)=\lan J_1,\ld,J_6, J_7+J_8\ran\]
as subspaces of $h^*(\mf e_8)$ (see Section \ref{sec:background}). 
The orthogonal complement of $\mf h^*(\mf e_6)$ in $\mf h^*(\mf e_7)$ is 1-dimensional and generated by $-2J_6+J_7+J_8$. We have
\[\mf h^*(\mf e_6\ti\mf{so}_2)=\lan J_1,\ld,J_5, J_6+J_7+J_8\ran\op\lan-2J_6+J_7+J_8\ran.\]
The weights of the isotropy representation are the roots of $\mf e_7$ that are not roots of $\mf e_6\ti\mf{so}_2$. These are $\pm (J_7+J_8)$, $\pm J_i\pm J_6$ for $1\le i\le 5$, and 
\[\fr{1}{2}\big(\ep_1J_1+\cd+\ep_5J_5+\ep_6(J_6-J_7-J_8)\big)\]
where $(\ep_1,\ld,\ep_6)\in\{\pm1\}^6$ has even number of $-1$'s. To write these roots as weights of $\mf e_6\ti\mf{so}_2$, note that 
\[J_7+J_8=\fr{1}{3}\big[2(J_6+J_7+J_8)+(-2J_6+J_7+J_8)\big]\]
and
\[J_6=\fr{1}{3}\big[(J_6+J_7+J_8)-(-2J_6+J_7+J_8)\big].\]
The weights of the isotropy representation are 
\begin{itemize}
\item \[\pm\fr{1}{3}\big[2(J_6+J_7+J_8)+(-2J_6+J_7+J_8)\big],\]
\item for $1\le i\le 5$, \[\pm J_i\pm\fr{1}{3}\big[(J_6+J_7+J_8)-(-2J_6+J_7+J_8)\big]\] 
\item for $(\ep_1,\ld,\ep_6)\in\{\pm1\}^6$ with an even number of $-1$'s, 
\[\fr{1}{2}\left(\ep_1J_1+\cd+\ep_5J_5+\ep_6\cdot\fr{1}{3}\big[-(J_6+J_7+J_8)-2(-2J_6+J_7+J_8)\big]\right)\]
\end{itemize}

Let $S\sbs E_6\ti\SO_2$ be a maximal torus. Let $z_i=\tau(J_i)\in H^2(BS)$ for $1\le i\le 5$, and let $z_6=\tau(J_6+J_7+J_8)$ and let $z_7=\tau(-2J_6+J_7+J_8)$. As elements of $H^2(BS)$, the weights of the isotropy representation are 
\begin{itemize}
\item $\pm(2z_6+z_7)$, 
\item $\pm z_i\pm\fr{1}{3}(z_6-z_7)$ for $1\le i\le 5$, and 
\item $\fr{1}{2}\big(\ep_1z_1+\cd+\ep_5z_5+\ep_6\cdot\fr{1}{3}(-z_6-2z_7)\big)$, where $(\ep_1,\ld,\ep_6)\in\{\pm1\}^6$ has even number of $-1$'s. 
\end{itemize}
The total Chern class is 
\begin{equation}\label{eqn:chernE725}\begin{gathered}
\begin{array}{lll}c(\iota_\co)&=&\left(1-\fr{1}{9}(2z_6+z_7)^2\right)\prod\left(1-(z_i+\fr{z_6-z_7}{3})^2\right)\left(1-(z_i-\fr{z_6-z_7}{3})^2\right)\\[2mm]
&&\prod\left(1-\fr{1}{4}\left(\ep_1z_1+\cd+\ep_5z_5+\ep_6\fr{-z_6-2z_7}{3}\right)^2\right).\end{array}\end{gathered}\end{equation}
The first product is over $1\le i\le 5$. The second product is over $(\ep_1,\ld,\ep_6)\in\{\pm1\}^6$ with an even number of $-1$'s. 

\subsection{Isotropy representation for real forms of $E_6$}\label{sec:isotropyE6} \mbox{ }

\vspace{.1in}

\noindent\un{$E_{6(6)}$} The maximal compact subgroup $K\sbs E_{6(6)}$ has Lie algebra $\mf{sp}_4$. As a $K$-representation, $\mf e_6$ decomposes as
\[\mf e_6=\mf{sp}_4\op W\]
where $W\sbs\la^4_8$ is the kernel of the contraction map $\la^4_8\ra\la^2_8$. It is the irreducible representation of $K$ with highest weight $L_1+L_2+L_3+L_4$. So $\mf p=W$ and the nonzero weights of the isotropy representation are 
\[\begin{array}{cll}
\pm L_i\pm L_j&1\le i<j\le 4\\[1mm]
\ep_1L_1+\cd+\ep_4L_4&(\ep_1,\ld,\ep_4)\in\{\pm1\}^4.
\end{array}\]
Let $S\sbs\SP_4$ be a maximal torus. As an element of $H^*(BS)\simeq\q[y_1,\ld,y_4]$, the total Chern class is 
\begin{equation}\label{eqn:chernE66}c(\iota_\co)=\prod_{1\le i<j\le 4}\big(1-(y_i+y_j)^2\big)\big(1-(y_i-y_j)^2\big)\prod\big(1-(\ep_1y_1+\ep_2y_2+\ep_3y_3+y_4)^2\big).\end{equation}
The second product is over all tuples $(\ep_1,\ep_2,\ep_3)\in\{\pm1\}^3$. 

\vspace{.1in}

\noindent\un{$E_{6(2)}$} The maximal compact subgroup $K\sbs E_{6(2)}$ has Lie algebra $\mf{su}_6\ti\mf{su}_2$. As a $K$-representation, $\mf e_6$ decomposes as
\[\mf e_6=(\mf{su}_6\op\mf{su}_2)\>\op\> (\la^3_6\ot\la^1_2).\]
With respect to the standard basis for $\mf h^*(\mf{su}_6\ti\mf{su}_2)$, the isotropy representation $\mf p=\la^3_6\ot\la^1_2$ has weights $L_{i_1}+L_{i_2}+L_{i_3}\pm L_7$, where $1\le i_1<i_2<i_3\le 6$. 

Let $S\sbs K$ be a maximal torus. As an element of $\q[y_1,\ld,y_7]\onto H^*(BS)$ the total Chern class is 
\begin{equation}\label{eqn:chernE62}c(\iota_\co)=\prod_{1\le i_1<i_2<i_3\le 6}\big(1-(y_{i_1}+y_{i_2}+y_{i_3}+y_7)^2\big).\end{equation}

\vspace{.1in}

\noindent\un{$E_{6(-14)}$} The maximal compact subgroup $K\sbs E_{6(-14)}$ has Lie algebra $\mf{so}_{10}\ti\mf{so}_2$. According to \cite{adams} (pg.\ 53), as a $K$-representation $\mf e_6$ decomposes as
\[\mf e_6=\big(\mf{so}_{10}+\mf{so}_2\big)\oplus\big(\De^+\ot\xi^3+\De^-\ot\xi^{-3}\big)\]
where $\xi:\SO_2\ra\co^\ti$ denotes the identity representation of $\SO_2$ (or rather the induced Lie algebra representation), and $\xi^k:\SO_2\ra\co^\ti$ denotes the $k$-th power of $\xi$. 

With respect to the standard basis $\mf h^*(\mf{so}_{10}\ti\mf{so}_2)=\lan L_1,\ld,L_5\ran\op\lan L_6\ran$, the isotropy representation $\mf p=\De^+\ot\xi^3+\De^-\ot\xi^{-3}$ has weights 
\[\fr{1}{2}\big(\ep_1L_1+\cd+\ep_5L_5)+3\ep_6L_6,\]
where $(\ep_1,\cd,\ep_6)\in\{\pm1\}^6$ has an even number of $-1$'s.

Let $S\sbs K$ be a maximal torus. As an element of $H^*(BS)\simeq\q[y_1,\ld,y_5]\ot\q[y_6]$, the total Chern class of the isotropy representation is 
\begin{equation}\label{eqn:chernE614}c(\iota_\co)=\prod\left(1-\fr{1}{4}(\ep_1y_1+\cd+\ep_5y_5+6y_6)^2\right),\end{equation}
with the product over the tuples $(\ep_1,\ld,\ep_5)\in\{\pm1\}^5$ with an even number of $-1$'s. 

\vspace{.1in}

\noindent\un{$E_{6(-26)}$} The maximal compact subgroup $K\sbs E_{6(-26)}$ has Lie algebra $\mf f_4$. Following \cite{adams} (pg.\ 95), as a $K$-representation, $\mf e_6$ decomposes
\[\mf e_6=\mf f_4\>\op\>U\]
where $U$ is the 26-dimensional fundamental representation of $\mf f_4$. With respect to the standard basis $\mf h^*(\mf f_4)=\lan L_1,\ld,L_4\ran$, the isotropy representation $\mf p\simeq U$ has weights $\pm L_i$ for $1\le i\le 4$ and $\fr{1}{2}(\pm L_1\pm L_2\pm L_3\pm L_4)$.

Let $S\sbs F_4$ be a maximal torus. As an element of $H^*(BS)\simeq\q[y_1,y_2,y_3,y_4]$, the total Chern class of the isotropy representation is 
\begin{equation}\label{eqn:chernE626}c(\iota_\co)=\prod_{i=1}^4(1-y_i^2)\prod_{(\ep_1,\ep_2,\ep_3)\in\{\pm1\}^3}\left(1-\fr{1}{4}(\ep_1y_1+\ep_2y_2+\ep_3y_3+y_4)^2\right).
\end{equation}

\subsection{Isotropy representation for real forms of $F_4$}\label{sec:isotropyF4}\mbox{ }

\vspace{.1in}

\noindent \un{$F_{4(4)}$} The maximal compact subgroup $K\sbs F_{4(4)}$ has Lie algebra $\mf{su}_2\ti\mf{sp}_3$. As a $K$-representation, $\mf f_4$ decomposes as 
\[\mf f_4=(\mf{su}_2\op\mf{sp}_3)\>\op\>\la^1_2\ot W\]
where $W\sbs\la^3_6$ is the kernel of the contraction map $\la^3_6\ra\la^1_6$ (it is an irreducible representation of $\SP_3$ of dimension 14). 

Since $K\sbs F_{4(4)}$ have the same rank, we can identify $\mf h^*(\mf f_4)=\lan J_1,J_2,J_3,J_4\ran$ and $\mf h^*(\mf{su}_2\ti\mf{sp}_3)=\lan L_1\ran\op\lan L_2,L_3, L_4\ran$. 

The roots of $F_4$ are
\[\left\{\begin{array}{ccl}
\pm J_i& 1\le i\le 4\\[1mm]
\pm J_i\pm J_j& 1\le i<j\le 4 \\[1mm]
\fr{1}{2}(\pm J_1\pm J_2\pm J_3\pm J_4)
\end{array}\right.\]
According to \cite{roth} (pg.\ 390), the roots of $\mf{su}_2\ti\mf{sp}_3\sbs\mf f_4$ are \[\begin{array}{ccl}\pm (J_1-J_2),\>\>\pm (J_1+J_2),\>\> \pm J_3,\>\>\pm J_4,\>\>\pm J_3\pm J_4, \>\>\\[1mm]\fr{1}{2}\big(\pm(J_1+J_2)\pm J_3\pm J_4\big)\\[1mm]\end{array}\]
We can identify these roots with the roots of $\mf{su}_2\ti\mf{sp}_3$ inside $\mf h^*(\mf{su}_2\ti\mf{sp}_3)$ by the identification 
\[\begin{array}{llll}
J_1-J_2&\leftrightarrow& 2L_1\\[1mm]
J_1+J_2&\leftrightarrow&2L_2\\[1mm]
J_3+J_4&\leftrightarrow& 2L_3\\[1mm]
J_3-J_4&\leftrightarrow& 2L_4
\end{array}\]
Under this identification, the weights of the isotropy representation $\mf p=W\ot\la^1_2$ will be the roots of $\mf f_4$ that are not roots of $\mf{su}_2\ti\mf{sp}_3$. Then the weights of the isotropy representation are $\pm L_1\pm L_i$ for $i=2,3,4$ and $\pm L_1\pm L_2\pm L_3\pm L_4$ (for all 16 sign choices).

Let $S\sbs K$ be a maximal torus. As an element of $H^*(BS)\simeq\q[y_1]\ot\q[y_2,y_3,y_4]$, the total Chern class for the isotropy representation is 
\begin{equation}\label{eqn:chernF44}
c(\iota_\co)=\prod_{i=2}^4\big(1-(y_1+y_i)^2\big)\big(1-(y_1-y_i)^2\big)\prod\big(1-(y_1+\ep_2y_2+\ep_3y_3+\ep_4y_4)^2\big)
\end{equation}
where the second product is over all tuples $(\ep_2,\ep_3,\ep_4)\in\{\pm1\}^3$. 

\vspace{.1in}

\noindent\un{$F_{4(-20)}$} The maximal compact subgroup $K\sbs F_{4(-20)}$ has Lie algebra $\mf{so}_9$. Following \cite{adams} (pg.\ 51), $\mf f_4$ decomposes as a representation of $K$ as
\[\mf f_4=\mf{so}_9\op\De\]
where $\De$ is the spin representation of $K$. The isotropy representation $\mf p=\De$ has weights $\fr{1}{2}(\pm L_1\pm\cd\pm L_4)$. 

Let $S\sbs K$ be the maximal torus. As an element of $H^*(BS)\simeq\q[y_1,\ld,y_4]$, the total Chern class of the isotropy representation is 
\begin{equation}\label{eqn:chernF420}c(\iota_\co)=\prod_{(\ep_1,\ep_2,\ep_3)\in\{\pm1\}^3}\left(1-\fr{1}{4}(\ep_1y_1+\ep_2y_2+\ep_3y_3+y_4)^2\right).
\end{equation}

\subsection{Isotropy representation for real forms of $G_2$}\label{sec:isotropyG2} The maximal compact subgroup of $G_{2(2)}$ is $K=\SU_2\ti\SU_2$. As a $K$-representation, $\mf g_2$ decomposes
\[\mf g_2=\big(\mf{su}_2\ti\mf{su}_2)+\sym^2(V)\ot V\]
where $V$ is the standard representation of $\SU_2$, and $\sym^2(V)$ is the 2nd symmetric power. The weights of $\mf p=\sym^2(V)\ot V$ are $\pm L_1\pm L_2$ and $\pm 3L_1\pm L_2$. (Compare with Lecture 22 of \cite{fh} or pg.\ 393 of \cite{roth}.)

Let $S\sbs \SU_2\ti\SU_2$ be a maximal torus. As an element of $H^*(BS)\simeq\q[y_1,y_2]$, the total Chern class is 
\begin{equation}\label{eqn:chernG22}c(\iota_\co)=(1-(y_2+3y_1)^2)(1-(y_2+y_1)^2)(1-(y_2-y_1)^2)(1-(y_2-3y_1)^2).\end{equation}

\section{Computing $\al_2^*:H^*(BG)\ra H^*(BG^\de)$}\label{sec:cw} 

The following theorem allows us to compute $\al_2: H^*(BG)\ra H^*(BG^\de)$ for $G$ a real, semisimple Lie group. See Milnor \cite{milnor_liediscrete}, Theorem 2.

\begin{thm}[Milnor]\label{thm:milnor}
Let $G$ be a real, simple, connected Lie group. Assume that its complexification $G_\co$ is simple. Then sequence
\[H^*(B G_\co;\q)\xra{i^*} H^*(BG;\q)\xra{\al_2^*} H^*(BG^\de;\q)\]
induced by the maps $G^\de\ra G\ra G_\co$ is ``exact" in the sense that the kernel of $\al_2^*$ is the ideal generated by the image of $i^*: H^k(BG_\co)\ra H^k(BG)$ for $k>0$.\end{thm}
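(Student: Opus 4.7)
My plan is to prove the two inclusions of the ``exactness'' statement separately, following Milnor's original strategy.

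For the easy inclusion, that the ideal generated by $\im(i^*)$ lies in $\ker(\al_2^*)$, one uses Chern-Weil theory. Since $\al_2^*$ is a ring homomorphism, its kernel is automatically an ideal, so it suffices to verify that $\al_2^*\circ i^*=0$ on positive-degree classes. A class in $\im(i^*)$ is represented, via Chern-Weil, by an invariant polynomial $P$ on $\mf g_\co$. Given any flat principal $G$-bundle $E\ra X$ over a smooth manifold $X$ with flat connection $\omega$, the complex-linear extension produces a flat connection on the associated $G_\co$-bundle $E\ti_G G_\co$, whose curvature $\Omega$ vanishes. Thus the Chern-Weil representative $P(\Omega,\ldots,\Omega)$ is zero, and naturality together with the universal example of $BG^\de$ (approximated by smooth manifolds) yields $\al_2^*\circ i^*=0$ on $H^{>0}(BG_\co)$.

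The hard inclusion, that $\ker(\al_2^*)$ is contained in the ideal generated by $\im(i^*)$, is the real content. My plan would be to use Borel's theorem (as in Section~\ref{sec:background}) to identify $H^*(BG)\cong H^*(BK)$ and $H^*(BG_\co)\cong H^*(BU)$, where $K\sbs U$ are the maximal compact subgroups of $G$ and $G_\co$. Then $i^*$ becomes the map induced by $K\hra U$, so the quotient $H^*(BG)/(\text{ideal generated by }\im(i^*))$ can be described explicitly in terms of invariant polynomials on $\mf k$ modulo those extending to $\mf u$. To show this quotient embeds in $H^*(BG^\de)$, one would construct flat $G$-bundles (for instance, over locally symmetric spaces $\Ga\bs G/K$ with $\Ga\sbs G$ a cocompact lattice) whose characteristic classes detect every nontrivial element of the quotient.

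The main obstacle is this second inclusion. Constructing flat bundles with prescribed nontrivial characteristic classes, or equivalently controlling the full image of $\al_2^*$, is technically demanding and forms the heart of Milnor's proof. One likely needs his comparison of $BG^\de$ with $BG_\co^\de$ together with careful use of the structure of discrete-group cohomology, as well as the hypothesis that $G_\co$ is simple, which constrains the relevant invariant theory and rules out degenerate cases coming from products of simple factors.
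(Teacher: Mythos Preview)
The paper does not prove this theorem; it simply quotes it from Milnor's paper \cite{milnor_liediscrete} (Theorem~2) and uses it as a black box. So there is no ``paper's own proof'' to compare your proposal against.

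As for the substance of your proposal: the easy inclusion via Chern--Weil is exactly the standard argument and is fine. For the hard inclusion, what you have written is a plan rather than a proof, and you are candid about this. Your suggested route---detecting classes in the quotient ring via characteristic classes of flat bundles over locally symmetric spaces $\Ga\bs G/K$---is not how Milnor proceeds, and it would run into a circularity: Proposition~\ref{prop:step3} of the present paper tells you that $\al_3^*$ is injective on the \emph{image} of $\al_2^*$, but it does not tell you what that image is, which is precisely what you are trying to determine. Milnor's actual argument for the hard inclusion goes through the comparison of $H^*(BG^\de)$ with $H^*(BG_\co^\de)$ and uses that for a simply connected complex simple group the map $H^*(BG_\co)\to H^*(BG_\co^\de)$ is injective (a nontrivial input); you allude to this at the end, correctly identifying it as where the real work lies, but you have not supplied it.
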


Theorem \ref{thm:milnor} applies to all the groups $G$ in Table 2. If $G$ is one of the complex Lie groups from Table 1, then $G_\co\simeq G\ti G$ is not simple, so Theorem \ref{thm:milnor} does not apply. In this case, we have the following theorem, whose proof comes from Chern-Weil theory (see Milnor \cite{milnor_liediscrete} Lemma 11). 

\begin{thm}\label{thm:milnor2}
Let $G$ be a complex, simple Lie group with finitely many components. Then $\al_2^*:H^i(BG;\q)\ra H^i(BG^\de;\q)$ is zero for $i>0$.
\end{thm}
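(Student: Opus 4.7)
The plan is to apply Chern--Weil theory to the universal flat $G$-bundle $EG^\de\ra BG^\de$. The argument rests on two ingredients, and the main novelty relative to Theorem \ref{thm:milnor} is that for \emph{complex} $G$ the Chern--Weil construction accesses \emph{all} of $H^{>0}(BG;\q)$, not just the image from $H^*(BG_\co)$.

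First, for a complex Lie group $G$ with finitely many components and maximal compact subgroup $U\sbs G$, the inclusion is a homotopy equivalence, so $H^*(BG;\q)\simeq H^*(BU;\q)$. By Chern--Weil for compact groups, every positive-degree class in $H^{>0}(BU;\re)$ is represented by a Chern--Weil form built from a $U$-invariant polynomial $P$ on $\mf u=\text{Lie}(U)$. Since $\mf g=\mf u\ot_\re\co$, any such $P$ extends uniquely to a holomorphic $G$-invariant polynomial on $\mf g$. Consequently the Chern--Weil homomorphism
\[
\Phi: I^{>0}(\mf g)^G\longrightarrow H^{>0}(BG;\co)
\]
is surjective.

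Second, for any principal $G$-bundle $P\ra N$ over a smooth manifold equipped with a \emph{flat} connection, the curvature two-form vanishes, so the Chern--Weil forms $\Phi(P)$ of positive degree vanish identically as forms on $N$. Now given a class $c\in H^k(BG;\q)$ with $k>0$, I would represent its complexification $c\ot 1\in H^k(BG;\co)$ as $\Phi(P)$ for some $G$-invariant polynomial $P$. For any smooth manifold $N$ and smooth map $f:N\ra BG^\de$, the pullback of the universal flat bundle $EG^\de\ra BG^\de$ is a flat $G$-bundle over $N$, so the differential form representing $f^*(\al_2^*c)\ot 1$ on $N$ is identically zero. Hence $f^*(\al_2^*c)=0$ in $H^*(N;\co)$ for every such $f$.

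To conclude that $\al_2^*c=0$ in $H^*(BG^\de;\q)$ itself, one uses a smooth simplicial approximation for $BG^\de$: the singular cohomology of $BG^\de$ is detected by the family of smooth maps from simplices (with smooth structure on $EG^\de$ inherited from $G$ considered as a smooth manifold on each fiber), and on each such simplex the pullback connection is flat. This last step is the main obstacle and is precisely the content of Milnor's Lemma~11 in \cite{milnor_liediscrete}; it requires care because $BG^\de$ is not itself a smooth manifold. Once it is in hand, the vanishing of the Chern--Weil forms on all smooth test simplices forces $\al_2^*c\ot 1=0$ in $H^*(BG^\de;\co)$, and injectivity of the change-of-coefficients map $H^*(BG^\de;\q)\hra H^*(BG^\de;\co)$ yields $\al_2^*c=0$ as desired.
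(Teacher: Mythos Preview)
Your proposal is correct and follows essentially the same approach as the paper, which does not give its own proof but simply defers to Chern--Weil theory via Milnor's Lemma~11 in \cite{milnor_liediscrete}. You have correctly identified the key point distinguishing the complex case from Theorem~\ref{thm:milnor}: since $\mf g=\mf u\ot_\re\co$, every $U$-invariant polynomial on $\mf u$ extends to a $G$-invariant polynomial on $\mf g$, so Chern--Weil reaches all of $H^{>0}(BG;\q)$ rather than only the image of $H^*(BG_\co)$.
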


By Theorem \ref{thm:milnor2}, if $G$ is one of the groups from Table 1, then $\al_2^*$ is zero in positive degrees and so $p_i(\Ga\bs G/K)=0$ for $i>0$. This proves Theorem \ref{thm:main} for the groups in Table 1. 

In the remainder of this section we use Theorem \ref{thm:milnor} to compute $\al_2^*$ for all the $G$ in Table 2. Let $G$ be one of the groups in Table 2 with maximal compact subgroup $K$. Let $G_\co$ be the complexification of $G$ and let $U$ be the maximal compact subgroup of $G_\co$. Let $S\sbs K$ and $S'\sbs U$ be maximal tori such that complexification $G\ra G_\co$ sends $S$ into $S'$. We have the following diagram of inclusions, and an induced diagram on cohomology.

\[\begin{xy}
(-40,15)*+{G}="A";
(-20,15)*+{G_\co}="B";
(-40,0)*+{K}="C";
(-20,0)*+{U}="D";
(-40,-15)*+{S}="E";
(-20,-15)*+{S'}="F";
{\ar"A";"B"}?*!/_3mm/{};
{\ar "D";"B"}?*!/_3mm/{};
{\ar "C";"A"}?*!/^3mm/{};
{\ar "C";"D"}?*!/_3mm/{};
{\ar "E";"F"}?*!/^3mm/{};
{\ar "E";"C"}?*!/_3mm/{};
{\ar "F";"D"}?*!/_3mm/{};
(10,15)*+{H^*(BG)}="G";
(40,15)*+{H^*(BG_\co)}="H";
(10,0)*+{H^*(BK)}="I";
(40,0)*+{H^*(BU)}="J";
(10,-15)*+{H^*(BS)^W}="K";
(40,-15)*+{H^*(BS')^{W'}}="L";
(10,-30)*+{H^*(BS)}="M";
(40,-30)*+{H^*(BS')}="N";
{\ar"H";"G"}?*!/^3mm/{i^*};
{\ar "H";"J"}?*!/_3mm/{\simeq};
{\ar "G";"I"}?*!/_3mm/{\simeq};
{\ar "J";"I"}?*!/_3mm/{};
{\ar@{-->} "L";"K"}?*!/^3mm/{j^*};
{\ar "I";"K"}?*!/_3mm/{\simeq};
{\ar "J";"L"}?*!/_3mm/{\simeq};
{\ar "N";"M"}?*!/^3mm/{};
{\ar@{^{(}->} "K";"M"}?*!/_3mm/{};
{\ar@{^{(}->} "L";"N"}?*!/_3mm/{};
\end{xy}\]

\vspace{.1in}
In the diagram on the right, the top two vertical arrows are isomorphisms because $K\hra G$ and $U\hra G_\co$ are homotopy equivalences. The middle two vertical arrows are isomorphisms by Theorem \ref{thm:borel}. Thus to compute the map $i^*$ it is enough to compute $j^*$. We do this  in the remainder of this section. 

\subsection{Computing $\al_2^*$ for $G=\Sl_n(\re)$}\label{sec:cwSLn} Here $K=\SO_n$ and $U=\SU_n$. For $\ta\in\re/2\pi\z$, let 
\[\un{\ta}=\left(\begin{array}{rc}\cos(\ta)&\sin(\ta)\\-\sin(\ta)&\cos(\ta)\end{array}\right).\]
Let $k=[n/2]$. The map $S\ra S'$ sends
\begin{equation}\label{eqn:weightsSO(n)}\left(\begin{array}{ccccc}\un{\ta}_1&&\\&\ddots&\\&&\un{\ta}_k\\&&&(1)\end{array}\right)\mapsto\left(\begin{array}{llc}D&\\&D^{-1}\\&&(1)\end{array}\right)\end{equation}
where 
\[D=\left(\begin{array}{ccc}e^{i\ta_1}&&\\&\ddots&\\&&e^{i\ta_k}\end{array}\right),\]
and the 1 in (\ref{eqn:weightsSO(n)}) appears only if $n$ is odd. Identify 
\[H^*(BS')^{W'}\simeq\fr{\sym(x_1\ld,x_n)}{(x_1+\cd+x_n)}\]
and
\[H^*(BS)^W\simeq\left\{
\begin{array}{clll}
\sym(y_1^2,\ld,y_k^2)&n=2k+1\\[2mm]
\lan\sym(y_1^2,\ld,y_k^2), y_1\cd y_k\ran& n=2k
\end{array}\right.\]

\vspace{.05in}
\noindent From (\ref{eqn:weightsSO(n)}) we conclude that the image of $j^*:H^*(BS')^{W'}\ra H^*(BS)^W$ is $\sym(y_1^2,\ld,y_k^2)$.

\subsection{Computing $\al_2^*$ for $G=\SU_{p,q}$}\label{sec:cwSUpq} Here  $K=S(U_p\ti U_q)$ and $U=\SU_{p+q}$, and the inclusion $S\hra S'$ is an isomorphism. 
Identify 
\[H^*(BS')^{W'}\simeq\fr{\sym(x_1\ld,x_{p+q})}{(x_1+\cd+x_{p+q})}\]
and
\[H^*(BS)^{W}\simeq\fr{\sym(y_1\ld,y_p)\ot\sym(z_1,\ld,z_q)}{(y_1+\cd+y_p+z_1+\cd+z_{q})}.\]
The image of $j^*:H^*(BS')^{W'}\ra H^*(BS)^W$ is $\sym(y_1,\ld,y_p,z_1,\ld, z_q)$. 

\begin{cor}\label{cor:SUpq2}
Let $p,q\ge2$ and let $G=\emph{\SU}_{p,q}$. Then $\al_2^*:H^*(BG)\ra H^*(BG^\de)$ is injective on the linear subgroup generated by $\sum_{i<j}y_iy_j$ and $\big(\sum y_i\big)^2$.
\end{cor}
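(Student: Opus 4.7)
The plan is to combine Milnor's theorem (Theorem \ref{thm:milnor}) with the explicit description of $\im(i^*)$ from Section \ref{sec:cwSUpq}. By Milnor, the kernel of $\al_2^*$ is the ideal $I \sbs H^*(BG)$ generated by the positive-degree part of the image of $i^*: H^*(BG_\co) \ra H^*(BG)$, so proving that $\al_2^*$ is injective on the subspace $V := \lan \sum_{i<j}y_iy_j,\>(\sum y_i)^2 \ran$ is equivalent to showing $V \cap I = 0$. First I would note that under the identification $H^*(BG) \simeq \sym(y_1,\ld,y_p)\ot\sym(z_1,\ld,z_q)/(e_1(y)+e_1(z))$, the image of $i^*$ consists exactly of the polynomials symmetric in all $p+q$ variables $y_i,z_j$, modulo the relation $e_1(y)+e_1(z)=0$.

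Next I would pin down $I$ in cohomological degree $4$ (where $V$ lives, since each $y_i$ has degree $2$). Because $H^*(BS)$ is concentrated in even degrees, any element of $I$ of degree $4$ is either $r \cdot x$ with $\deg(x)=2, \deg(r)=2$ or $\deg(x)=4, \deg(r)=0$. In degree $2$ the image of $i^*$ is generated by $e_1(y)+e_1(z)$, which is zero in $H^*(BG)$, so only the second case contributes. The degree-$4$ part of $\im(i^*)$ is spanned by the elementary symmetric function
\[
e_2(y,z) \;=\; \sum_{1\le i<j\le p} y_iy_j + \sum_{i,j} y_iz_j + \sum_{1\le i<j\le q}z_iz_j,
\]
so $I\cap H^4(BG) = \q\cdot e_2(y,z)$.

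Finally, using the relation $e_1(z)=-e_1(y)$, I would expand
\[
e_2(y,z) \;=\; e_2(y) + e_1(y)e_1(z) + e_2(z) \;=\; e_2(y) - e_1(y)^2 + e_2(z)
\]
in $H^*(BG)$, which is a polynomial ring in the algebraically independent generators $e_1(y), e_2(y), \ld, e_p(y), e_2(z), \ld, e_q(z)$ (here both $p\ge2$ and $q\ge2$ ensure $e_2(y)$ and $e_2(z)$ are genuine generators). A general element of $V$ has the form $a\,e_2(y) + b\,e_1(y)^2$, with no $e_2(z)$ term. Setting $a\,e_2(y)+b\,e_1(y)^2 = c\,[e_2(y)-e_1(y)^2+e_2(z)]$ and comparing the coefficient of $e_2(z)$ forces $c=0$, after which $a=b=0$ is immediate. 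Thus $V\cap I = 0$, proving the corollary.

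The only mild obstacle is the bookkeeping to confirm that $I\cap H^4(BG)$ is exactly one-dimensional (i.e.\ that degree-$2$ contributions vanish because the degree-$2$ Weyl invariant $e_1(y)+e_1(z)$ is killed by the $\Sl$-trace relation); once that is in place the verification is a direct comparison of polynomial coefficients.
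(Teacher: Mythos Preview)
Your argument is correct and follows essentially the same approach as the paper: reduce via Theorem~\ref{thm:milnor} to showing that no nontrivial combination of $\si_2^y$ and $(\si_1^y)^2$ lies in the ideal generated by the fully symmetric polynomials, then identify the degree-$4$ part of that ideal and compare coefficients. The only cosmetic difference is that the paper keeps the contribution $(a\si_1^y+b\si_1^z)\cdot\si_1^{y,z}$ in the linear-algebra step rather than first observing, as you do, that $\si_1^{y,z}=0$ in $H^*(BG)$ forces the degree-$4$ part of the ideal to be exactly $\q\cdot\si_2^{y,z}$; your version is a mild streamlining of the same computation.
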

\begin{proof}
For $1\le i\le p$ (resp.\ $1\le i\le q$), let $\si_i^y$ (resp.\ $\si_i^z$) denote the $i$-th elementary symmetric polynomial in $\{y_1,\ld,y_p\}$ (resp.\ $\{z_1,\ld,z_q\}$), viewed as elements of $H^*(BS)^W\simeq H^*(BG)$. Similarly, let $\si_i^{y,z}$ denote the $i$-th elementary symmetric polynomial in $\{y_1,\ld,y_p,z_1,\ld,z_q\}$. Let $\ca I$ denote the ideal  \[\ca I=(\si_1^{y,z},\ld,\si_{p+q}^{y,z}).\] 
The following relations are easy to verify.
\begin{equation}
\label{eqn:corSUpq2}\si_1^{y,z}=\si_1^y+\si_1^z,\>\>\>\>\>\text{ and }\>\>\>\>\>\si_2^{y,z}=\si_2^y+\si_1^y\si_1^z+\si_2^z.\end{equation}
Note that $(\si_1^y)^2=\big(\sum y_i\big)^2$ and $\si_2^y=\sum_{i<j} y_iy_j$. By Theorem \ref{thm:milnor}, to prove the corollary it is enough to show that no nontrivial linear combination $c\cdot \si_2^y+d\cdot(\si_1^y)^2$ belongs to $\ca I$. The elements of $\ca I$ that have total degree 2 all have the form 
\[(a\si_1^y+b\si_1^z)\cdot \si_1^{y,z}+m\cdot \si_2^{y,z}\]
where $a,b,m$ are scalars. By simple linear algebra, one checks that if
\[c\cdot \si_2^y+d\cdot(\si_1^y)^2=(a\si_1^y+b\si_1^z)\cdot \si_1^{y,z}+m\cdot \si_2^{y,z},\] then $m=a=b=0$. This implies $c=d=0$ because $\si_2^y$ and $(\si_1^y)^2$ are linearly independent in $H^*(BS)^W$. Hence no nontrivial linear combination $c\cdot \si_2^y+d\cdot(\si_2^y)^2$ belongs to $\ca I$.
\end{proof}

\begin{cor}\label{cor:SUpq}
Let $p\ge2$ and let $G=\emph{\SU}_{p,1}$. Then $\al_2^*:H^*(BG)\ra H^*(BG^\de)$ is injective on $\sum_{i<j}y_iy_j$, and 
\[\al_2^*\big(\sum y_i\big)^2=\al_2^*\big(\sum_{i<j}y_iy_j\big).\]
\end{cor}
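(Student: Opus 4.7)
The plan is to mimic the argument of Corollary \ref{cor:SUpq2}, exploiting the simplification that occurs when $q=1$. First I would recall from Section \ref{sec:cwSUpq} the identification
\[H^*(BG) \simeq \sym(y_1,\ldots,y_p,z_1)/(\si_1^{y,z}),\]
where $\si_1^{y,z} = y_1 + \cdots + y_p + z_1$, and the fact from Theorem \ref{thm:milnor} that $\ker\al_2^*$ equals the ideal $\ca I$ generated by the symmetric functions $\si_k^{y,z}$ in all $p+1$ variables for $k \ge 2$.

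For the equality $\al_2^*(\sum y_i)^2 = \al_2^*(\sum_{i<j} y_i y_j)$, I would use the $q=1$ specialization of (\ref{eqn:corSUpq2}): since the variable set $\{z_1\}$ is a singleton, $\si_2^z = 0$ and hence $\si_2^{y,z} = \si_2^y + \si_1^y \si_1^z$. Eliminating $z_1 = -\si_1^y$ via the defining relation $\si_1^{y,z} = 0$, this becomes $\si_2^{y,z} = \si_2^y - (\si_1^y)^2$. Since $\si_2^{y,z} \in \ca I = \ker \al_2^*$, applying $\al_2^*$ yields the desired identity.

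For the injectivity statement I would show $\si_2^y = \sum_{i<j} y_i y_j \notin \ca I$. After substituting $z_1 = -\si_1^y$, the ideal $\ca I \sbs \sym(y_1,\ldots,y_p)$ is generated by the homogeneous polynomials $\si_k^y - \si_1^y\si_{k-1}^y$ of degree $k$ (in the $y_i$'s) for $k = 2, \ldots, p+1$. Since $\ca I$ is homogeneous and the unique generator of degree $2$ is $\si_2^y - (\si_1^y)^2$, membership $\si_2^y \in \ca I$ would force $\si_2^y = c\bigl(\si_2^y - (\si_1^y)^2\bigr)$ for some scalar $c$, i.e.\ $(1-c)\si_2^y + c(\si_1^y)^2 = 0$. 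The only obstacle, and it is a trivial one, is the linear independence of $\si_2^y$ and $(\si_1^y)^2$ in $\sym(y_1,\ldots,y_p)$ for $p \ge 2$; this is immediate since $\si_2^y = \sum_{i<j} y_i y_j$ contains no $y_i^2$ monomials while $(\si_1^y)^2 = \sum_i y_i^2 + 2\si_2^y$ does. Hence $c=0$ and then $1=0$, a contradiction, completing the proof.
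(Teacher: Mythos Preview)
Your argument is correct and follows essentially the same route as the paper's: both parts come down to the relation $\si_2^{y,z}=\si_2^y-(\si_1^y)^2$ in $H^*(BG)$, and the injectivity reduces to the linear independence of $\si_2^y$ and $(\si_1^y)^2$. One notational slip: the displayed identification should read $\big(\sym(y_1,\ldots,y_p)\otimes\q[z_1]\big)/(\si_1^{y,z})$, not $\sym(y_1,\ldots,y_p,z_1)/(\si_1^{y,z})$; the latter denotes polynomials symmetric in all $p+1$ variables, which is the image of $i^*$, not $H^*(BG)$ itself. Your subsequent substitution $z_1=-\si_1^y$ shows you are in fact working in the correct ring, so the slip does not affect the proof.
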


\begin{proof}
We use the same notation as in the proof of Corollary \ref{cor:SUpq2}. Since $q=1$ in the present case, the relations in (\ref{eqn:corSUpq2}) simplify to the following relations.
\begin{equation}
\label{eqn:corSUpq}\si_1^{y,z}=\si_1^y+\si_1^z,\>\>\>\>\>\text{ and }\>\>\>\>\>\si_2^{y,z}=\si_2^y+\si_1^y\si_1^z.\end{equation}
Since the elements $\si_i^y, \si_j^z$ are all linearly independent in $H^2(BS)^W$, it is a simple matter of linear algebra to show that $\si_2^y$ is not in $\ca I$, and hence is not in the kernel of $\al_2^*$. 

To see that $\al_2^*\big((\si_1^y)^2\big)=\al_2^*(\si_2^y)$, note that the first equation in (\ref{eqn:corSUpq}) implies that 
\[\si_1^y\cdot\si_1^{y,z}=(\si_1^y)^2+\si_1^y\si_1^z\in\ca I.\] 
Then $\si_2^y-(\si_1^y)^2=\si_2^{y,z}-\si_1^y\cdot\si_1^{y,z}$ belongs to $\ca I$ (the right hand side obviously does). This implies that $\si_2^y$ and $(\si_1^y)^2$ have the same image under $\al_2^*$. 
\end{proof}

\subsection{Computing $\al_2^*$ for $G=\SP_{2n}(\re)$}\label{sec:cwSP2n} Here $K=U_n$ and $U=\SP(n)$, and the inclusion $S\hra S'$ is an isomorphism. Identify 
\[H^*(BS')^{W'}\simeq\sym(x_1^2,\ld,x_n^2)\>\>\>\>\text{ and }\>\>\>\>H^*(BS)^W\simeq\sym(y_1,\ld,y_n).\]
The image of $j^*:H^*(BS')^{W'}\ra H^*(BS)^W$ is $\sym(y_1^2,\ld,y_n^2)$. 

\begin{cor}\label{cor:SP2n}
Let $G=\emph{\SP}_{2n}(\re)$. Then $\al_2^*\big(\sum y_i\big)^2\neq0$.
\end{cor}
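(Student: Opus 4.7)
The plan is to apply Theorem \ref{thm:milnor} directly. By that result, the kernel of $\al_2^*:H^*(BG)\ra H^*(BG^\de)$ is the ideal $\ca I\sbs H^*(BS)^W\simeq \sym(y_1,\ld,y_n)$ generated in positive degrees by the image of $j^*$, which was just computed to be $\sym(y_1^2,\ld,y_n^2)$. So it suffices to show $\big(\sum y_i\big)^2\notin\ca I$.

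The key identity is
\[\big(\sum_{i=1}^n y_i\big)^2=\sum_{i=1}^n y_i^2+2\sum_{i<j}y_iy_j.\]
Since $\sum y_i^2=e_1(y_1^2,\ld,y_n^2)$ belongs to $\ca I$, it is equivalent to showing that $e_2(y_1,\ld,y_n)=\sum_{i<j}y_iy_j$ does not lie in $\ca I$.

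This reduces to a degree count. The ideal $\ca I$ is generated by the polynomials $e_k(y_1^2,\ld,y_n^2)$ for $k=1,\ld,n$, whose polynomial degrees in the $y_i$ are $2,4,\ld,2n$. Hence any element of $\ca I$ of polynomial degree $2$ is a scalar multiple of $e_1(y_1^2,\ld,y_n^2)=\sum y_i^2$. But $\sum_{i<j}y_iy_j$ and $\sum y_i^2$ are linearly independent in $\sym(y_1,\ld,y_n)$ whenever $n\ge 2$, so $e_2(y_1,\ld,y_n)\notin\ca I$, completing the proof.

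There is no serious obstacle here; the only thing to notice is that the generators of $\ca I$ all have even polynomial degree in the $y_i$, which makes the linear-independence check in degree $2$ essentially automatic. The analogous reasoning was already used in Corollaries \ref{cor:SUpq2} and \ref{cor:SUpq}.
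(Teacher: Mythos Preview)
Your proof is correct and follows essentially the same approach as the paper: both reduce via Theorem~\ref{thm:milnor} to showing that $\big(\sum y_i\big)^2$ is not a scalar multiple of the sole degree-$2$ generator $\omega_1=\sum y_i^2$ of $\ca I$. The only cosmetic difference is that you first subtract off $\sum y_i^2\in\ca I$ and then check that $\sum_{i<j}y_iy_j$ is not a multiple of $\sum y_i^2$, whereas the paper checks directly that $\sigma_1^2$ is not a multiple of $\omega_1=\sigma_1^2-2\sigma_2$; these are the same linear-independence statement. Your explicit remark that $n\ge 2$ is needed is a good observation (for $n=1$ one has $\sigma_2=0$ and the claim fails), and this hypothesis is indeed used when the corollary is applied later.
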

\begin{proof}
Let $\si_i$ (resp.\ $\om_i$) denote the $i$-th elementary symmetric polynomial in $\{y_1,\ld,y_n\}$ (resp. $\{y_1^2,\ld,y_n^2\}$). Let $\ca I$ denote the ideal $(\om_1,\ld,\om_n)$.

By Theorem \ref{thm:milnor}, to show $\si_1^2=\big(\sum y_i\big)^2$ is not in $\ker\al_2^*$, it is enough to show $\si_1^2\notin\ca I$. Note that $\om_1=\si_1^2-2\si_2$ and that the only elements of $\ca I$ of degree 2 are scalar multiples of $\om_1$. Since $\si_1^2$ is obviously not a multiple of $\si_1^2-2\si_2$, we conclude $\si_1^2\notin\ca I$.
\end{proof}

\subsection{Computing $\al_2^*$ for $G=\SO_{p,q}$}\label{sec:cwSOpq} Here $K=\SO_p\ti\SO_q$ and $U=\SO_{p+q}$. Let $m=[(p+q)/2]$, let $a=[p/2]$, and let $b=[q/2]$. Identify 
\[H^*(BS')\simeq\q[x_1,\ld,x_m]\>\>\>\>\text{ and }\>\>\>\>H^*(BS)\simeq[y_1,\ld,y_p, z_1,\ld,z_q].\]
It is not hard to see that the image of $j^*:H^*(BS')^{W'}\ra H^*(BS)^W$ contains  
$\sym(y_1^2,\ld,y_p^2,z_1^2,\ld,z_q^2)$. 

\begin{cor}\label{cor:SOpq}
Let $p,q\ge2$ and let $G=\emph{\SO}_{p,q}$. Then $\al_2^*\big(\sum y_i^2\big)\neq0$. 
\end{cor}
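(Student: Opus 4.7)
The plan is to invoke Theorem \ref{thm:milnor}, which reduces the claim to showing that the element $\sum y_i^2$ does not lie in the ideal $\ca I \subset H^*(BS)^W \simeq H^*(BG)$ generated by the image of $j^*$ in positive degrees. Since the strategy is identical in structure to the proofs of Corollaries \ref{cor:SUpq2}, \ref{cor:SUpq}, and \ref{cor:SP2n}, I only need to verify non-membership of $\sum y_i^2$ in $\ca I$ by low-degree linear algebra.

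First, I would identify the degree-$4$ part of $\ca I$. Because every generator of $H^*(B\SO_{p+q}) \simeq H^*(BS')^{W'}$ is either symmetric in the squared variables $x_i^2$ or (when $p+q$ is even) the Pfaffian $x_1\cdots x_m$, the lowest-degree positive invariant sits in degree $4$, and the unique such invariant (up to scalar) is $\om_1 = \sum x_i^2$. Its image under $j^*$ in $H^*(BS)^W$ is $\sum_i y_i^2 + \sum_j z_j^2$. The Pfaffian generator occurs in degree $p+q$, so it contributes to degree $4$ only in the boundary case $p=q=2$, where it maps to $y_1 z_1$.

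Second, a simple linear algebra check completes the argument. Since $p,q \ge 2$, both of the sums $\sum y_i^2$ and $\sum z_j^2$ are nonzero and involve disjoint sets of variables, hence are linearly independent in $H^*(BS)^W$; moreover $y_1 z_1$ (when present in the $p=q=2$ case) is linearly independent from both. Consequently $\sum y_i^2$ is not a $\q$-linear combination of $\sum y_i^2 + \sum z_j^2$ and, possibly, $y_1 z_1$. Therefore $\sum y_i^2 \notin \ca I$, and Theorem \ref{thm:milnor} yields $\al_2^*(\sum y_i^2) \neq 0$.

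There is no real obstacle here; the only minor subtlety is remembering to account for the extra Pfaffian generator of $H^*(B\SO_{p+q})$ when $p+q$ is even, but since that generator maps to a pure cross term $y_1 z_1$ (when it lives in degree $4$ at all), it is easily seen to be linearly independent from $\sum y_i^2$ and $\sum z_j^2$, and hence does not interfere with the conclusion.
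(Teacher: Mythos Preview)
Your proposal is correct and follows the same approach as the paper: invoke Theorem~\ref{thm:milnor} and check by degree-$4$ linear algebra that $\sum y_i^2$ is not in the ideal generated by the image of $j^*$. In fact you are slightly more careful than the paper: the paper's proof asserts that the image of $H^4(BS')^{W'}\to H^4(BS)^W$ is spanned by $\sum y_i^2+\sum z_j^2$, but as you observe, when $p=q=2$ the Euler class $x_1x_2\in H^4(B\SO_4)$ also lies in degree~$4$ and maps to $y_1z_1$, so the image is two-dimensional there; your observation that $y_1z_1$ is linearly independent from $\sum y_i^2$ and $\sum z_j^2$ patches this gap and the conclusion stands.
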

\begin{proof}
The proof is the same as in Corollary \ref{cor:SP2n}. The image of 
\[H^4(BS')^{W'}\simeq H^4(BG_\co)\ra H^4(BG)\simeq H^4(BS)^W\] is generated by multiples of $\sum y_i^2+\sum z_j^2$, and $\sum y_i^2$ does not have this form.
\end{proof}

\begin{cor}\label{cor:SOpp}
Let $p\ge4$ and let $G=\emph{\SO}_{p,p}$. Then $\al_2^*:H^*(BG)\ra H^*(BG^\de)$ is injective on the linear subgroup generated by $\sum_{i<j}y_i^2y_j^2$ and $\big(\sum y_i^2\big)^2$.
\end{cor}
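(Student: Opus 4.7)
The plan is to follow exactly the template of Corollaries \ref{cor:SUpq2} and \ref{cor:SOpq}. By Theorem \ref{thm:milnor}, the kernel $\ca I$ of $\al_2^*$ is the ideal in $H^*(BG)\simeq H^*(BK)$ generated by the positive-degree image of $i^*:H^*(BG_\co)\ra H^*(BG)$. Writing $a=[p/2]$, $u_i=y_i^2$, $v_j=z_j^2$, and $\si_k^u,\si_k^v,\si_k^{u,v}$ for the elementary symmetric polynomials in the indicated variables, the description of $j^*$ in Section \ref{sec:cwSOpq} shows that $\ca I$ is generated by $\si_1^{u,v},\ldots,\si_{p-1}^{u,v}$ together with (when $p$ is even) the Euler class $e=y_1\cdots y_a z_1\cdots z_a$. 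When $p$ is odd, the Euler class of $\SO_{2p}$ restricts to $0$ in $H^*(BS)$ because one coordinate $x_i$ pulls back to $0$, so it contributes nothing. I must show no nontrivial $f=c\,\si_2^u+d\,(\si_1^u)^2\in\ca I$.

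The element $f$ sits in $H^8(BK)$. A degree count limits the possible contributors to $\ca I$ in that degree to
\[
A_1\cdot\si_1^{u,v}\;+\;\alpha\cdot\si_2^{u,v}\;+\;\big(\beta\cdot e\text{ if } p=4\big),
\]
with $A_1\in H^4(BK)$ and $\alpha,\beta\in\q$; for $p\ge 5$ the Euler class has degree $\ge 10$, and $\si_k^{u,v}$ has degree $4k>8$ for $k\ge 3$. For $p\ge 5$, $H^4(BK)$ has basis $\{\si_1^u,\si_1^v\}$, so $A_1=\beta_u\si_1^u+\beta_v\si_1^v$. Plugging in $\si_1^{u,v}=\si_1^u+\si_1^v$ and $\si_2^{u,v}=\si_2^u+\si_1^u\si_1^v+\si_2^v$ and equating the resulting expansion with $c\,\si_2^u+d(\si_1^u)^2$, I read off coefficients against the linearly independent classes $(\si_1^u)^2$, $(\si_1^v)^2$, $\si_1^u\si_1^v$, $\si_2^u$, $\si_2^v$ in $H^8(BK)$. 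This yields the system
\[
\beta_u=d,\qquad \beta_v=0,\qquad \beta_u+\beta_v+\alpha=0,\qquad \alpha=c,\qquad \alpha=0,
\]
and the last two equations force $c=0$, whence $d=\beta_u=0$. The decisive step is the coefficient of $\si_2^v$, which is present in $\si_2^{u,v}$ but absent from $f$: this is the same mechanism used in Corollary \ref{cor:SUpq2}.

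The one place requiring care, and where I expect the main bookkeeping obstacle, is $p=4$: here $H^4(BK)$ is enlarged by the $\SO_4$-Euler classes $e'_y=y_1y_2$ and $e'_z=z_1z_2$, and the Euler class $e=e'_ye'_z$ of $\SO_8$ lives in degree $8$ and must be included. Writing $A_1=\beta_u\si_1^u+\beta_v\si_1^v+\gamma_u e'_y+\gamma_v e'_z$ and tracking the $10$-dimensional basis of $H^8(BK)$, the extra parameters $\gamma_u,\gamma_v,\beta$ control precisely the monomials involving $e'_y$ or $e'_z$ (namely $\si_1^u e'_y$, $\si_1^v e'_z$, $\si_1^u e'_z$, $\si_1^v e'_y$, $e'_y e'_z$), all of which are absent from $f$; this decouples them from the five-equation system above, which again forces $c=d=0$ by the same $\si_2^v$ argument. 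Thus the result holds uniformly for all $p\ge 4$.
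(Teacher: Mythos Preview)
Your proof is correct and follows the same approach as the paper, which simply says the argument is ``identical to the proof of Corollary~\ref{cor:SUpq2} after replacing $y_i$ and $z_j$ by $y_i^2$ and $z_j^2$.'' You are in fact more careful than the paper: you explicitly track the Euler-class contributions to $H^4(BK)$ and to the ideal when $p=4$ (where $e=y_1y_2z_1z_2$ lands in degree~$8$ and $e'_y,e'_z$ enlarge $H^4(BK)$), a bookkeeping point the paper's one-line proof glosses over but which, as your computation shows, decouples from the $\si_2^v$-coefficient argument and does not affect the conclusion.
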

\begin{proof}
The proof is identical to the proof of Corollary \ref{cor:SUpq2} after replacing $y_i$ and $z_j$ by $y_i^2$ and $z_j^2$. 
\end{proof}

If $p=2,3$, then $a=b=1$ and the polynomial $\big(\sum y_i^2\big)$ in the statement of Corollary \ref{cor:SOpp} is equal to $y_1^4$. In this case we show $\al_2^*(y_1^4)=0$. 
\begin{cor}\label{cor:SOpp2}
Let $G=\emph{\SO}_{2,2}$ or $\emph{\SO}_{3,3}$. Then $\al_2^*(y_1^4)=0$.
\end{cor}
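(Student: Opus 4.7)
The plan is to apply Theorem \ref{thm:milnor} directly: it identifies $\ker \al_2^*$ with the ideal $\ca I \sbs H^*(BG) \simeq H^*(BS)^W$ generated by the positive-degree part of $\text{Im}(j^*)$. Thus it suffices to exhibit $y_1^4$ as an element of $\ca I$ in each case.

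First I would pin down enough of $\text{Im}(j^*)$ to carry out a short manipulation. In both cases one has $K \simeq \SO_p\ti\SO_q$ with $a=b=1$ and $U=\SO_{p+q}$. The key observation, already noted in Section \ref{sec:cwSOpq}, is that $\text{Im}(j^*)$ contains $\sym(y_1^2,z_1^2)$ in positive degrees; in particular both $y_1^2+z_1^2$ and $y_1^2 z_1^2$ lie in $\text{Im}(j^*)$. For $\SO_{2,2}$ this is immediate because the torus inclusion $S\ra S'$ is an isomorphism, so the $W'$-invariants $x_1^2+x_2^2,\, x_1^2 x_2^2 \in H^*(BS')^{W'}$ pull back to $y_1^2+z_1^2,\, y_1^2 z_1^2$. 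For $\SO_{3,3}$ the $2$-dimensional torus $S$ embeds (after conjugation) as a subtorus of the $3$-dimensional $S'$ such that one of the $x_i$ restricts to zero; the $W'$-invariants $x_1^2+x_2^2+x_3^2$ and $x_1^2 x_2^2+x_1^2 x_3^2+x_2^2 x_3^2$ then pull back to $y_1^2+z_1^2$ and $y_1^2 z_1^2$ respectively, while the Pfaffian $x_1 x_2 x_3$ maps to $0$.

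The conclusion follows from the single identity
\[
y_1^4 \;=\; y_1^2(y_1^2 + z_1^2) \;-\; y_1^2 z_1^2,
\]
which displays $y_1^4$ as an $H^*(BK)$-linear combination of two elements of $\text{Im}(j^*)\cap H^{>0}$, and hence as an element of $\ca I$. Invoking Theorem \ref{thm:milnor} then gives $\al_2^*(y_1^4)=0$ in both cases.

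There is no real obstacle; the only mildly subtle point is the rank drop in the $\SO_{3,3}$ case, where one must remember that $S\hra S'$ is not an isomorphism, so one of the $x_i$ restricts to zero. This bookkeeping is a minor variant of calculations already performed in Sections \ref{sec:cwSOpq} and \ref{sec:cwSUpq}.
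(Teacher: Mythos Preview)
Your argument is correct and essentially identical to the paper's. Both use that $y_1^2+z_1^2$ and $y_1^2z_1^2$ lie in the image of $j^*$ (hence in $\ker\al_2^*$) and then the same algebraic identity; you phrase it as $y_1^4=y_1^2(y_1^2+z_1^2)-y_1^2z_1^2\in\ca I$, while the paper applies $\al_2^*$ first and writes $\al_2^*(y_1^4)=\al_2^*(y_1^2)\cdot\al_2^*(y_1^2)=-\al_2^*(y_1^2)\al_2^*(z_1^2)=-\al_2^*(y_1^2z_1^2)=0$.
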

\begin{proof}
If $p=2$ or $p=3$, then $H^*(BG)=\q[y_1^2,z_1^2]$. We show $\al_2^*(y_1^4)=0$. By the computation of $H^*(BS')^{W'}\ra H^*(BS)^W$ above, $\al_2^*(y_1^2+z_1^2)=0$ and $\al_2^*(y_1^2z_1^2)=0$. Then
\[\al_2^*(y_1^4)=\al_2^*(y_1^2)\cdot\al_2^*(y_1^2)=-\al_2^*(y_1^2)\al_2^*(z_1^2)=-\al_2^*(y_1^2z_1^2)=0.\qedhere\]
\end{proof}

\subsection{Computing $\al_2^*$ for $G=\SP_{p,q}$}\label{sec:cwSPpq} Here $K=\SP(p)\ti\SP(q)$ and $U=\SP(p+q)$, and the inclusion $S\hra S'$ is an isomorphism. Identify 
\[H^*(BS')^{W'}\simeq \sym(x_1^2,\ld,x_{p+q}^2)\]
and
\[H^*(BS)^W\simeq\sym(y_1^2,\ld,y_p^2)\ot\sym(z_1^2,\ld,z_q^2).\]
The image of $j^*:H^*(BS')^{W'}\ra H^*(BS)^W$ is $\sym(y_1^2,\ld,y_p^2,z_1^2,\ld,z_q^2)$. We have the following Corollaries \ref{cor:SPpq} and \ref{cor:SPpp} whose proofs are identical to the proofs for Corollaries \ref{cor:SOpq} and \ref{cor:SOpp}, respectively. 

\begin{cor}\label{cor:SPpq}
Let $p,q\ge1$ and let $G=\emph{\SP}_{p,q}$. Then $\al_2^*$ is injective on $\sum y_i^2$.
\end{cor}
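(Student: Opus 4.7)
The plan is to mimic the proof of Corollary~\ref{cor:SOpq} verbatim, after transporting the symmetric functions across the squaring of variables. By Theorem~\ref{thm:milnor}, the kernel of $\al_2^*:H^*(BG)\to H^*(BG^\de)$ is the ideal $\ca I$ in $H^*(BG)\simeq\sym(y_1^2,\ld,y_p^2)\ot\sym(z_1^2,\ld,z_q^2)$ generated by the positive-degree elements in the image of $i^*:H^*(BG_\co)\to H^*(BG)$. From the computation of $j^*$ just above, that image is exactly $\sym(y_1^2,\ld,y_p^2,z_1^2,\ld,z_q^2)$, which as a $\q$-algebra without unit is generated by the elementary symmetric polynomials $\om_1,\ld,\om_{p+q}$ in the variables $\{y_1^2,\ld,y_p^2,z_1^2,\ld,z_q^2\}$. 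Hence $\ca I=(\om_1,\ld,\om_{p+q})$.

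To check that $\sum y_i^2\notin\ker\al_2^*$ it therefore suffices to verify $\sum y_i^2\notin\ca I$. Give each $y_i$ and $z_j$ the polynomial degree $1$, so that $\sum y_i^2$ has degree $2$ and $\om_k$ has degree $2k$. The only degree-$2$ elements of $\ca I$ are then the scalar multiples of $\om_1=\sum y_i^2+\sum z_j^2$. Since $q\ge1$ the polynomial $\sum y_i^2$ is not a scalar multiple of $\om_1$, so $\sum y_i^2\notin\ca I$.

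There is essentially no obstacle beyond bookkeeping: the only thing to be careful about is that the image of $i^*$ really is all of $\sym(y_1^2,\ld,y_p^2,z_1^2,\ld,z_q^2)$ and not a proper subring, and that every degree-$2$ element of $\ca I$ is of the claimed form. Both points are immediate from the description of $j^*$ and from the fact that $\om_k$ has degree $2k\ge 4$ for $k\ge 2$. This mirrors exactly the role played by $\sum y_i^2+\sum z_j^2$ in Corollary~\ref{cor:SOpq}, so no new idea is required.
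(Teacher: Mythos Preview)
Your proof is correct and follows exactly the approach the paper intends: the paper states that the proof of Corollary~\ref{cor:SPpq} is identical to that of Corollary~\ref{cor:SOpq}, which in turn observes that the degree-$4$ part of $\ker\al_2^*$ consists of scalar multiples of $\sum y_i^2+\sum z_j^2$, and $\sum y_i^2$ is not of this form. Your write-up simply makes this explicit, including the observation that $q\ge1$ is what guarantees $\sum y_i^2$ is not a multiple of $\om_1$.
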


\begin{cor}\label{cor:SPpp}
Let $q\ge1$ and $p\ge2$, and let $G=\emph{\SP}_{p,q}$. Then $\al_2^2$ is injective on $\big(\sum y_i\big)^2$. If $p,q\ge2$, then $\al_2^*$ is injective on the linear subgroup generated by $\sum_{i<j}y_i^2y_j^2$ and $\big(\sum y_i^2\big)^2$.
\end{cor}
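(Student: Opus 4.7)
By Theorem \ref{thm:milnor}, $\ker\al_2^*$ equals the ideal $\ca I\sbs H^*(BG)\simeq H^*(BS)^W$ generated by the image of $i^*\colon H^*(BG_\co)\ra H^*(BG)$. The computation of Section \ref{sec:cwSPpq} identifies this image with the subring of fully symmetric polynomials $\sym(y_1^2,\ld,y_p^2,z_1^2,\ld,z_q^2)$ inside $\sym(y_1^2,\ld,y_p^2)\ot\sym(z_1^2,\ld,z_q^2)$. Writing $\om_k$ for the $k$-th elementary symmetric polynomial in $\{y_1^2,\ld,y_p^2,z_1^2,\ld,z_q^2\}$, one has $\ca I=(\om_1,\ld,\om_{p+q})$, and the entire problem reduces to linear algebra modulo $\ca I$ in degree $4$ (as polynomials in the variables $y_i^2$ and $z_j^2$).

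The plan is to follow the template of Corollaries \ref{cor:SUpq2} and \ref{cor:SOpp} verbatim, working now in the squared variables. Let $\si_i$ and $\tau_j$ denote the $i$-th elementary symmetric polynomials in $\{y_1^2,\ld,y_p^2\}$ and $\{z_1^2,\ld,z_q^2\}$, respectively; the only identities needed are $\om_1=\si_1+\tau_1$ and $\om_2=\si_2+\si_1\tau_1+\tau_2$. An arbitrary degree-$4$ element of $\ca I$ has the form $(a\si_1+b\tau_1)\om_1+m\om_2$ for scalars $a,b,m$, which expands to
\[a\si_1^2+(a+b+m)\si_1\tau_1+b\tau_1^2+m\si_2+m\tau_2.\]

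For the first claim (that $\al_2^*\bigl(\sum y_i^2\bigr)^2\neq 0$ when $p\ge 2$, $q\ge 1$), I would assume $d\si_1^2\in\ca I$ and match coefficients: the four monomials $\si_1^2,\si_1\tau_1,\tau_1^2,\si_2$ are linearly independent for $p\ge 2$, $q\ge 1$ (even when $q=1$, where $\tau_2=0$), so the coefficient equations $a=d$, $a+b+m=0$, $b=0$, $m=0$ force $d=0$. For the second claim ($p,q\ge 2$), the monomial $\tau_2$ becomes independent as well, and assuming $c\si_2+d\si_1^2\in\ca I$ the five coefficient equations $a=d$, $a+b+m=0$, $b=0$, $m=c$, $m=0$ collectively force $c=d=0$.

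The main obstacle is understanding why the statement must be split into two cases. The $q=1$ situation is genuinely degenerate: there $\tau_2=0$, and the identity $\om_2-\si_1\om_1=\si_2-\si_1^2$ shows $\si_2\equiv\si_1^2\pmod{\ca I}$, so the $2$-dimensional subspace of the second claim would collapse to $1$-dimensional modulo $\ca I$. This forces the first claim to isolate $\si_1^2$ as the sole surviving generator in that case, in exact parallel with the split between Corollary \ref{cor:SOpq} and Corollary \ref{cor:SOpp} in the orthogonal setting; no new ideas beyond the linear algebra above are required.
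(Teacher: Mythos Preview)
Your proposal is correct and follows exactly the route the paper intends: the paper's ``proof'' is the one-line assertion that the argument is identical to Corollaries \ref{cor:SOpq} and \ref{cor:SOpp}, which in turn reduce to the linear-algebra computation of Corollary \ref{cor:SUpq2} with $y_i,z_j$ replaced by $y_i^2,z_j^2$---precisely what you have written out. Your added discussion of why the $q=1$ case must be separated (because $\tau_2=0$ collapses $\si_2$ and $\si_1^2$ modulo $\ca I$) is correct and clarifies a point the paper leaves implicit.
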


\subsection{Computing $\al_2^*$ for $G=\SO^*_{2n}$}\label{sec:cwSO*} Here $K=U_n$ and $U=\SO_{2n}$, and the inclusion $S\hra S'$ is an isomorphism. Identify
\[H^*(BS')\simeq\q[x_1,\ld,x_n]\>\>\>\>\>\text{ and }\>\>\>\>\>H^*(BS)\simeq \q[y_1,\ld,y_n].\]
The image of 
\[j^*: H^*(BS')^{W'}\ra H^*(BS)^W=\sym(y_1,\ld,y_n)\]
contains $\sym(y_1^2,\ld,y_n^2)$. We have the following corollary, whose proof is identical to the proof of Corollary \ref{cor:SP2n}. 

\begin{cor}\label{cor:SO*}
Let $G=\emph{\SO}^*_{2n}$. Then $\al_2^*\big(\sum y_i\big)^2\neq0$. 
\end{cor}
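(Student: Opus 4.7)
The plan is to imitate the proof of Corollary \ref{cor:SP2n}, using Theorem \ref{thm:milnor} to reduce the nonvanishing of $\al_2^*\big(\sum y_i\big)^2$ to a purely algebraic statement. By Theorem \ref{thm:milnor} it suffices to show that $\si_1^2 := \big(\sum y_i\big)^2$ does not lie in the ideal $\ca I \sbs H^*(BS)^W \simeq \sym(y_1,\ld,y_n)$ generated by the image of $j^*$ in positive degrees, where $\si_i$ denotes the $i$-th elementary symmetric polynomial in $\{y_1,\ld,y_n\}$.

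First I would identify $\ca I$ in cohomological degree $4$. The present subsection computes that the image of $j^*$ contains $\sym(y_1^2,\ld,y_n^2)$, and since $\SO^*_{2n}$ is simple only for $n\ge 3$, the remaining invariant generator $x_1\cd x_n$ of $H^*(BS')^{W'}$ (of Euler-class type) maps to $y_1\cd y_n$, which has cohomological degree $2n\ge 6$ and therefore contributes nothing to $\ca I$ in degree $4$. Letting $\om_k$ denote the $k$-th elementary symmetric polynomial in $\{y_1^2,\ld,y_n^2\}$, this shows that $\ca I \cap H^4(BS)^W$ is the one-dimensional subspace spanned by $\om_1 = \si_1^2 - 2\si_2$.

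Finally I would observe that $\si_1^2$ and $\si_2$ are linearly independent in $H^4(BS)^W$, so $\si_1^2$ is not a scalar multiple of $\si_1^2 - 2\si_2$, and therefore $\si_1^2 \notin \ca I$. By Theorem \ref{thm:milnor} this gives $\al_2^*(\si_1^2) \neq 0$. The argument is thus essentially identical to that of Corollary \ref{cor:SP2n}; the only point to verify beyond that proof is the harmless observation above that the $x_1\cd x_n$ summand of $H^*(BS')^{W'}$ sits in degree $\ge 6$ and so does not obstruct the low-degree calculation. There is no substantive obstacle.
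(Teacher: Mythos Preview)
Your proof is correct and follows exactly the approach the paper indicates: the paper simply states that the proof is identical to that of Corollary \ref{cor:SP2n}. Your added observation that the Euler-class generator $x_1\cd x_n$ lands in degree $2n\ge 6$ (for $n\ge 3$, the only range where $\SO^*_{2n}$ is simple and Theorem \ref{thm:milnor} applies) is a valid point the paper leaves implicit, and it confirms that the degree-4 part of $\ca I$ is indeed one-dimensional as in the $\SP_{2n}(\re)$ case.
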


\subsection{Computing $\al_2^*$ for $G=\SU^*_{2n}$}\label{sec:cwSU*} Here $K=\SP_n$ and $U=\SU_{2n}$. Identify 
\[H^*(BS')^{W'}\simeq\fr{\sym(x_1,\ld,x_{2n})}{(x_1+\cd+x_{2n})}\]
and
\[H^*(BS)^W\simeq\sym(y_1^2,\ld,y_n^2).\]
It is not hard to see that $j^*:H^*(BS')^{W'}\ra H^*(BS)^W$ is surjective. 

\begin{cor}\label{cor:SU*}
Let $G=\emph{\SU}_{2n}^*$. For $i>0$, the map $\al_2^*:H^i(BG)\ra H^i(BG^\de)$ is zero.
\end{cor}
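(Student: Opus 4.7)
The plan is to apply Milnor's theorem (Theorem \ref{thm:milnor}) directly, leveraging the surjectivity of $j^*$ asserted just before the statement of the corollary. Since $G = \SU^*_{2n}$ is a real form of the simple complex Lie group $G_\co = \Sl_{2n}(\co)$, Milnor's theorem tells us that $\ker \al_2^*$ is the ideal in $H^*(BG)$ generated by the positive-degree image of $i^*: H^*(BG_\co) \ra H^*(BG)$. Hence it suffices to show that $i^*$ is surjective in positive degrees: every element of $H^{>0}(BG)$ would then lie in the ideal generated by the image of $i^*$, and so would lie in $\ker \al_2^*$.

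The main step is to promote the surjectivity of $j^*: H^*(BS')^{W'} \ra H^*(BS)^W$ to the surjectivity of $i^*$ in positive degrees. This is immediate from the commutative square at the start of Section \ref{sec:cw}: the vertical maps $H^*(BG) \xra{\sim} H^*(BS)^W$ and $H^*(BG_\co) \xra{\sim} H^*(BS')^{W'}$ are isomorphisms by Theorem \ref{thm:borel} and the fact that $K \hra G$ and $U \hra G_\co$ are homotopy equivalences. Under these identifications $i^*$ becomes $j^*$, which we are already given is surjective.

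If pressed to justify the surjectivity of $j^*$ itself, I would compute the map $S \hra S'$ explicitly: a maximal torus of $\SP_n$ embeds in the diagonal torus of $\SU_{2n}$ as $\diag(e^{i\ta_1}, \ld, e^{i\ta_n}, e^{-i\ta_1}, \ld, e^{-i\ta_n})$, so on $H^2$ we get $x_i \mapsto y_i$ and $x_{n+i} \mapsto -y_i$. Matching coefficients in $\prod_{k=1}^{2n}(t - x_k)$ against $\prod_{i=1}^n(t^2 - y_i^2)$ shows that the odd elementary symmetric polynomials in the $x_k$ go to zero while $e_{2k}(x_1, \ld, x_{2n}) \mapsto e_k(y_1^2, \ld, y_n^2)$, and the latter polynomials generate $\sym(y_1^2, \ld, y_n^2) = H^*(BS)^W$. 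There is no real obstacle here: the corollary is a clean combination of Milnor's theorem with the surjectivity already in hand.
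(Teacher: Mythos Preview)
Your proposal is correct and matches the paper's (implicit) argument: the corollary is stated without proof precisely because it follows immediately from Theorem~\ref{thm:milnor} together with the surjectivity of $j^*$ recorded just above, via the commutative diagram at the start of Section~\ref{sec:cw}. Your optional justification of the surjectivity of $j^*$ is also correct and fleshes out what the paper dismisses as ``not hard to see.''
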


\subsection{Computing $\al_2^*$ for $G$ a real form of $E_8$}\label{sec:cwE8}\mbox{ }

\vspace{.1in}
\noindent\un{$E_{8(8)}$} Here $\text{Lie}(K)=\mf{so}_{16}$ and $U=E_8$, and the inclusion $S\hra S'$ is an isomorphism. As in Section \ref{sec:background} we identify
\[H^1(S')\simeq\lan J_1,\ld,J_8\ran\>\>\>\>\text{ and }\>\>\>\> H^1(S)\simeq\lan L_1,\ld,L_8\ran.\]
Under $H^1(S')\ra H^1(S)$ we have $J_i\mapsto L_i$. Let $z_i=\tau(J_i)\in H^2(BS')$ and let $y_i=\tau(L_i)\in H^2(BS)$. In Section \ref{sec:background}, we explained that 
\[H^*(BS')^{W'}\simeq\q[I_2,I_8,I_{12}, I_{14}, I_{18}, I_{20}, I_{24}, I_{30}].\]  
Under $j^*:H^4(BS')\ra H^4(BS)$ the polynomial $I_2$ maps to a multiple of $y_1^2+\cd+y_8^2$. By Theorem \ref{thm:milnor} the elements of $H^8(BS)^W$ in the kernel $\al_2^*$ are multiples of 
\[\big(y_1^2+\cd+y_8^2\big)^2=\left(\sum_{i=1}^8y_i^4\right)+2\left(\sum_{1\le i<j\le 8}y_i^2y_j^2\right)\]
This implies the following corollary.
\begin{cor}\label{cor:E88}
Let $G=E_{8(8)}$. With the notation above, $\al_2^*\big(\sum_{i=1}^8y_i^4\big)\neq0$.
\end{cor}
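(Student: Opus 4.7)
The plan is to apply Theorem \ref{thm:milnor}, which identifies $\ker\al_2^*$ with the ideal $\ca I\sbs H^*(BG)$ generated by the positive-degree image of $i^*:H^*(BG_\co)\ra H^*(BG)$. It suffices to show $\sum_{i=1}^8 y_i^4\notin\ca I$, so I would work in polynomial degree $4$ (cohomology degree $8$) of $H^*(BG)\simeq H^*(BS)^W$, where $W$ is the Weyl group of $\mf{so}_{16}$, acting on $\q[y_1,\ld,y_8]$ by permutations and even sign changes.

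First I would pin down the degree-$4$ part of $\ca I$. Since $H^*(BG_\co)\simeq\q[I_2,I_8,I_{12},I_{14},I_{18},I_{20},I_{24},I_{30}]$ has no generator of polynomial degree $4$ (and $I_2$ is the unique generator of degree $\le 4$), every polynomial degree $4$ element of $\ca I$ must have the form $j^*(I_2)\cdot f$ with $f\in H^*(BS)^W$ of polynomial degree $2$. The degree $2$ part of $H^*(BS)^W$ is spanned by $y_1^2+\cd+y_8^2$, and the text already notes that $j^*(I_2)$ is a nonzero scalar multiple of $y_1^2+\cd+y_8^2$. Hence the degree $4$ part of $\ca I$ is the one-dimensional span of
\[\big(y_1^2+\cd+y_8^2\big)^2=\sum_{i=1}^8 y_i^4+2\sum_{1\le i<j\le 8}y_i^2y_j^2.\]

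To finish, I would note that the polynomial degree $4$ invariants $H^*(BS)^W$ form a $2$-dimensional space with basis $\big\{\sum y_i^4,\>\sum_{i<j}y_i^2y_j^2\big\}$, and $\sum y_i^4$ is manifestly not a scalar multiple of $\sum y_i^4+2\sum_{i<j}y_i^2y_j^2$. Therefore $\sum y_i^4\notin\ca I$, which gives $\al_2^*\big(\sum y_i^4\big)\neq0$. The only point that requires care is confirming that no hidden elements of $\ca I$ in this degree arise from higher-degree generators or from non-obvious products; this is ruled out by the degree bookkeeping above combined with the known description of the invariant polynomials recalled in Section \ref{sec:invar}. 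In that sense the argument is essentially linear-algebraic, with no serious obstacle.
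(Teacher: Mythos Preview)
Your proposal is correct and follows essentially the same approach as the paper: the text preceding the corollary already observes that the degree-$8$ part of $\ker\al_2^*$ consists of scalar multiples of $\big(\sum y_i^2\big)^2$, and the corollary is deduced from this; you have simply spelled out the degree bookkeeping and the linear algebra that the paper leaves implicit.
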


\vspace{.1in}

\noindent\un{$E_{8(-24)}$} Here $K=E_7\ti\SU_2$ and $U=E_8$, and the inclusion $S\hra S'$ is an isomorphism. Identify $H^1(S')\simeq\lan J_1,\ld,J_8\ran$ and 
\[H^1(S)\simeq\lan J_1,\ld,J_6, J_7+J_8\ran\op\lan J_7-J_8\ran.\] Let $z_i=\tau(L_i)\in H^2(BS')$ for $1\le i\le 8$. Let $y_i=\tau(L_i)\in H^2(BS)$ for $1\le i\le 6$,  let $y_7=\tau(J_7+J_8)$ and $y_8=\tau(L_7-L_8)$. In Section \ref{sec:background} we explained that 
\[H^*(BS')^{W'}\simeq\q[I_2,I_8,I_{12}, I_{14}, I_{18}, I_{20}, I_{24}, I_{30}],\]  
and $I_2=30(z_1^2+\cd+z_8^2)$. Under $j^*:H^2(BS')\ra H^2(BS)$, $z_i\mapsto y_i$ for $1\le i\le 6$, $z_7\mapsto \fr{y_7+y_8}{2}$, and $z_8\mapsto \fr{y_7-y_8}{2}$, so  
\[j^*(I_2)= 30 (y_1^2+\cd+y_6^2)+15(y_7^2+y_8^2).\]
By Theorem \ref{thm:milnor}, every element of the kernel $H^4(BE_{8(-24)})\ra H^4\big(B(E_{8(-24)})^\de\big)$ is a scalar multiple of $j^*(I_2)$. This implies the following corollary. 

\begin{cor}\label{cor:E824}
Let $G=E_{8(-24)}$. With the notation above, $\al_2^*(y_8^2)\neq0$.
\end{cor}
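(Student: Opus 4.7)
The plan is to apply Theorem \ref{thm:milnor} to identify $\ker(\al_2^*)$ in degree $4$ and then exhibit $y_8^2$ outside it. Because $K = E_7 \ti \SU_2$ and $H^*(BG) \simeq H^*(BK)$ via Borel's theorem, I first want to describe $H^4(BG)$ explicitly. By Section \ref{sec:background}, $H^*(BE_7) = \q[I_2^{E_7}, I_6, \ld, I_{18}]$ with $I_2^{E_7} = 6(y_1^2+\cd+y_6^2)+3y_7^2$ the unique generator in degree $4$, and $H^*(B\SU_2) = \q[y_8^2]$. Hence $H^4(BG)$ is $2$-dimensional with basis $\{I_2^{E_7},\, y_8^2\}$.

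Next, Theorem \ref{thm:milnor} says $\ker(\al_2^*)$ is the ideal of $H^*(BG)$ generated by the positive-degree part of the image of $i^*$. The generators of $H^*(BE_8) = H^*(BG_\co)$ occur in degrees $4, 16, 24, 28, 36, 40, 48, 60$, so the only contribution to the ideal in degree $4$ is the $1$-dimensional subspace spanned by $j^*(I_2)$. Using $j^*(z_i) = y_i$ for $i \le 6$, $j^*(z_7) = (y_7+y_8)/2$, and $j^*(z_8) = (y_7-y_8)/2$, the given formula $I_2 = 30(z_1^2+\cd+z_8^2)$ yields
\[
j^*(I_2) \;=\; 30(y_1^2+\cd+y_6^2) + 15(y_7^2 + y_8^2) \;=\; 5\, I_2^{E_7} + 15\, y_8^2.
\]

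Finally, I would compare coordinates in the basis $\{I_2^{E_7}, y_8^2\}$: the class $y_8^2 = (0,1)$ is clearly not a scalar multiple of $j^*(I_2) = (5, 15)$, since the coefficient of $I_2^{E_7}$ is zero in the former and nonzero in the latter. Therefore $y_8^2 \notin \ker(\al_2^*)$, giving $\al_2^*(y_8^2) \neq 0$. The only non-routine ingredient is the dimension count for $H^4(BG)$, which however is immediate from Borel's theorem together with the description of the invariant rings recalled in Section \ref{sec:background}; everything else is a short linear-algebra check against a single ideal generator.
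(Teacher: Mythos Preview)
Your argument is correct and is essentially the paper's own proof, just made more explicit: the paper computes $j^*(I_2)=30(y_1^2+\cd+y_6^2)+15(y_7^2+y_8^2)$, notes that by Theorem~\ref{thm:milnor} the degree-$4$ kernel of $\al_2^*$ consists of its scalar multiples, and leaves the observation that $y_8^2$ is not such a multiple as immediate. Your extra step of writing $j^*(I_2)=5\,I_2^{E_7}+15\,y_8^2$ in the basis $\{I_2^{E_7},y_8^2\}$ of $H^4(BK)$ is a nice way to make that linear-algebra check transparent, but it is not a different route.
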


\subsection{Computing $\al_2^*$ for $G$ a real form of $E_7$}\label{sec:cwE7} \mbox{ }

\vspace{.1in}

\noindent\un{$E_{7(7)}$} Here $\text{Lie}(K)=\mf{su}_8$ and $U=E_7$, and the inclusion $S\hra S'$ is an isomorphism. We identify $H^1(S')$ as the subspace of $\lan J_1,\ld,J_8\ran$ orthogonal to $J_1+\cd+J_8$, and we identify $H^1(S)$ as the subspace of $\lan L_1,\ld,L_8\ran$ orthogonal to $L_1+\cd+L_8$. Then $H^1(S')\ra H^1(S)$ is the obvious map $J_i\mapsto L_i$. Let $z_i=\tau(J_i)\in H^2(BS')$ and $y_i=\tau(L_i)\in H^2(BS)$. In Section \ref{sec:background} we explained that 
\[H^*(BS')^{W'}\simeq\q[I_2,I_6,I_8,I_{10},I_{12}, I_{14}, I_{18}],\]
and computed $I_2= \fr{3}{4}\left[7\left(\sum z_i^2\right)+2\left(\sum z_iz_j\right)\right]$ for this copy of $E_7\sbs E_8$. Under $j^*:H^*(BS')\ra H^*(BS)$
\[I_2\mapsto \fr{3}{4}\left[ 7(\sum y_i^2)+2(\sum y_iy_j)\right]= -9\left(\sum y_iy_j\right).\]
The equality follows because of the relation $\sum y_i^2=-2\sum y_iy_j$ in $H^*(BS)^W$. By Theorem \ref{thm:milnor}, every element of the kernel of 
\[\al_2^*:H^8\big(BE_{7(7)}\big)\ra H^8\big(B(E_{7(7)})^\de\big)\]
is a scalar multiple of $\left(\sum y_i^2\right)^2$. This implies the following corollary. 

\begin{cor}\label{cor:E77}
Let $G=E_{7(7)}$. With the notation above, $\al_2^*\big(\sum y_i^4\big)\neq0$.
\end{cor}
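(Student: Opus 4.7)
My proof proposal follows the template of the preceding corollaries in this section. The plan is to apply Theorem \ref{thm:milnor} to identify $\ker\al_2^* \cap H^8(BG)$ as a single line, then exhibit $\sum y_i^4$ outside that line.

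First, I would observe that $H^*(BE_7) \simeq \q[I_2, I_6, I_8, I_{10}, I_{12}, I_{14}, I_{18}]$ has $I_2$ as its only generator in cohomological degree $\le 8$. Therefore the cohomological-degree-8 part of the ideal in $H^*(BG)$ generated by $i^*(H^{>0}(BG_\co))$ is spanned by products $i^*(I_2) \cdot \beta$ with $\beta \in H^4(BG)$ together with $i^*(I_2^2)$. The preceding computation gives $i^*(I_2) = \tfrac{3}{4}\bigl[7(\sum y_i^2) + 2(\sum y_iy_j)\bigr]$, which simplifies to $\tfrac{9}{2}\sum y_i^2$ upon applying the relation $\sum y_i = 0$. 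Since $H^4(BG) = H^4(BS)^W$ is one-dimensional (as $\mf{su}_8$ has $c_2$ as its only generator in cohomological degree $\le 4$), spanned by $\sum y_i^2$, both contributions collapse to the same line spanned by $\bigl(\sum y_i^2\bigr)^2$. Thus $\ker\al_2^* \cap H^8(BG) = \q \cdot \bigl(\sum y_i^2\bigr)^2$.

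The conclusion then follows from the elementary identity
\[\left(\sum y_i^2\right)^2 - \sum y_i^4 = 2\sum_{i<j} y_i^2 y_j^2,\]
whose right side is visibly a nonzero element of $H^*(BS)$. Hence $\sum y_i^4$ is not a scalar multiple of $\bigl(\sum y_i^2\bigr)^2$, so $\al_2^*\bigl(\sum y_i^4\bigr) \neq 0$. The main obstacle is really just keeping the generator-degree bookkeeping straight; once the kernel is pinned down to a single line, the final linear independence check is immediate.
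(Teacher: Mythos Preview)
Your approach matches the paper's: identify $\ker\al_2^*$ in degree $8$ as the line $\q\cdot\bigl(\sum y_i^2\bigr)^2$ via Theorem~\ref{thm:milnor} and the degree bookkeeping for $H^*(BE_7)$, then check that $\sum y_i^4$ lies off this line. The setup and the reduction are correct.

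There is, however, a small logical slip in your final step. From
\[
\Bigl(\sum y_i^2\Bigr)^2-\sum y_i^4=2\sum_{i<j}y_i^2y_j^2\neq 0
\]
you only conclude that $\sum y_i^4\neq\bigl(\sum y_i^2\bigr)^2$, not that $\sum y_i^4$ is outside the entire line $\q\cdot\bigl(\sum y_i^2\bigr)^2$. What you need is linear independence of $\sum y_i^4$ and $\bigl(\sum y_i^2\bigr)^2$ in $H^8(BS)^W\simeq H^8(B\SU_8)$. This is immediate once you express both in terms of the free generators: modulo $\si_1=0$ one has $\bigl(\sum y_i^2\bigr)^2=4\si_2^2$ while Newton's identities give $\sum y_i^4=2\si_2^2-4\si_4$, and $\si_2^2,\si_4$ are linearly independent in the polynomial ring $\q[\si_2,\ld,\si_8]$. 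With that adjustment your argument is complete and is exactly what the paper is (tersely) asserting.
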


\vspace{.1in}

\noindent\un{$E_{7(-5)}$} Here $\text{Lie}(K)=\mf{so}_{12}\ti\mf{su}_2$ and $U=E_7$, and the inclusion $S\hra S'$ is an isomorphism. As in Section \ref{sec:background}, we identify
\[H^1(S')\simeq\lan J_1,\ld,J_6, J_7+J_8\ran\>\>\>\>\text{ and }\>\>\>\> H^1(S)\simeq\lan L_1,\ld,L_6\ran\op\lan L_7\ran.\] Under $H^1(S')\ra H^1(S)$ we have 
\[\left\{\begin{array}{clll}
J_i&\mapsto&L_i& 1\le i\le 6\\[1mm]
J_7+J_8&\mapsto&2L_7
\end{array}\right.\]
Let $z_i=\tau(J_i)\in H^2(BS')$ for $1\le i\le 6$ and let $z_7=\tau(J_7+J_8)$. Let $y_i=\tau(L_i)\in H^2(BS)$. In Section \ref{sec:background} we explained that 
\[H^*(BS')^{W'}\simeq\q[I_2,I_6,I_8,I_{10},I_{12}, I_{14}, I_{18}].\]
Under $j^*:H^4(BS')\ra H^4(BS)$, 
\[I_2\mapsto 6 (y_1^2 + y_2^2 + y_3^2 + y_4^2 + y_5^2 + y_6^2+2y_7^2).\]
In particular, by Theorem \ref{thm:milnor} every element of the kernel of 
\[\al_2^*:H^4(BE_{7(-5)})\ra H^4\big(B(E_{7(-5)})^\de\big)\]
is a scalar multiple of $j^*(I_2)$. This implies the following corollary. 
\begin{cor}\label{cor:E75}
Let $G=E_{7(5)}$. With the notation above, $\al_2^*(y_7^2)\neq0$.
\end{cor}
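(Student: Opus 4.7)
The plan is to apply Theorem \ref{thm:milnor} and reduce the corollary to a one-line linear-algebra check in degree $4$. By that theorem, $\ker \al_2^*$ is the ideal of $H^*(BG) \simeq H^*(BS)^W$ generated by $i^*\bigl(H^{>0}(BG_\co)\bigr)$. Since $H^*(BG_\co) \simeq \q[I_2, I_6, I_8, I_{10}, I_{12}, I_{14}, I_{18}]$ with $|I_{2k}| = 4k$, the only generator of degree at most $4$ is $I_2$, while every other $I_{2k}$ has degree $\ge 12$. Consequently the degree-$4$ piece of this ideal is exactly the one-dimensional $\q$-subspace spanned by $j^*(I_2)$, whose formula was computed in the paragraph above.

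It therefore suffices to check that $y_7^2$ is not a scalar multiple of
\[
j^*(I_2) = 6(y_1^2 + y_2^2 + y_3^2 + y_4^2 + y_5^2 + y_6^2) + 12 y_7^2
\]
inside $H^4(BS)^W$. This is immediate from two observations: the monomial $y_1^2$ appears in $j^*(I_2)$ with coefficient $6$ but in $y_7^2$ with coefficient $0$; and $y_1^2$ and $y_7^2$ are $\q$-linearly independent in $H^4(BS)^W$, since under the decomposition $\mathrm{Lie}(K) = \mf{so}_{12} \ti \mf{su}_2$ they represent degree-$4$ characteristic classes coming from the two different factors of $K$ (so $H^4(BK)$ contains both as independent elements via the Künneth formula).

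There is essentially no obstacle: the corollary is the degree-$4$ analogue of the argument already used in Corollary \ref{cor:E824}, and once one has the explicit formula for $j^*(I_2)$ the conclusion follows purely from the observation that $j^*(I_2)$ involves $y_1^2, \ldots, y_6^2$ nontrivially whereas $y_7^2$ does not.
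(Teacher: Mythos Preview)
Your argument is correct and matches the paper's approach exactly: the paper simply observes that the degree-$4$ kernel of $\al_2^*$ consists of scalar multiples of $j^*(I_2)=6(y_1^2+\cdots+y_6^2+2y_7^2)$ and then states the corollary as an immediate consequence. One small correction in your justification: $y_1^2$ is not an element of $H^4(BS)^W$ (the Weyl group of the $\mf{so}_{12}$ factor permutes $y_1,\ldots,y_6$), so it is not a characteristic class of $K$; your coefficient comparison should instead be carried out in the ambient polynomial ring $H^4(BS)\simeq\q[y_1,\ldots,y_7]_4$, into which $H^4(BS)^W$ injects and where the monomials $y_i^2$ are visibly independent---with that adjustment the argument goes through unchanged.
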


\vspace{.1in}

\noindent\un{$E_{7(-25)}$} Here $\text{Lie}(K)=\mf e_6\ti\mf{so}_2$ and $U=E_7$, and the inclusion $S\hra S'$ is an isomorphism. Identify $H^1(S')\simeq\lan J_1,\ld,J_6, J_7+J_8\ran$ and 
\[H^1(S)\simeq\lan J_1,\ld,J_5, J_6+J_7+J_8\ran\op\lan 2J_6-J_7-J_8\ran.\] Let $z_i=\tau(J_i)\in H^2(BS')$ for $1\le i\le,6$, and let $z_7=\tau(J_7+J_8)$. For $1\le i\le 5$, let $y_i=\tau(J_i)\in H^2(BS)$, and let $y_6=\tau(J_6+J_7+J_8)$ and $y_7=\tau(2J_6-J_7-J_8)$. 

In Section \ref{sec:background} we explained that 
\[H^*(BS')^{W'}\simeq\q[I_2, I_6, I_8, I_{10}, I_{12}, I_{14}, I_{18}],\]
where $I_2=6(z_1^2+\cd+z_6^2)+3z_7^2$. Under $j^*: H^2(BS')\ra H^2(BS)$, $z_i\mapsto y_i$ for $1\le i\le 5$, $z_6\mapsto\fr{y_6-y_7}{3}$, $z_7\mapsto\fr{2y_6+y_7}{3}$, so  
\[j^*(I_2)=6 (y_1^2 +  y_2^2 + y_3^2 + y_4^2 + y_5^2) + 2 y_6^2 + y_7^2.\]
By Theorem \ref{thm:milnor}, $j^*(I_2)$ generates the kernel of 
\[\al_2^*:H^4\big(BE_{7(-25)}\big)\ra H^4\big(B(E_{7(-25)})^\de\big).\]
\noindent In particular, we have the following corollary.
\begin{cor}\label{cor:E725}
Let $G=E_{7(-25)}$. With the notation above, $\al_2^*(y_7^2)\neq0$.
\end{cor}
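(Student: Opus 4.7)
The plan is to combine Theorem \ref{thm:milnor} with the explicit expression for $j^*(I_2)$ recorded in the paragraph preceding the corollary. First I will invoke Theorem \ref{thm:milnor}: the kernel of $\al_2^*$ on $H^*(BG)\simeq H^*(BS)^W$ is the ideal $\ca I$ generated by the positive-degree part of the image of $i^*:H^*(BG_\co)\ra H^*(BG)$. Under the identification $H^*(BG_\co)\simeq H^*(BS')^{W'}\simeq \q[I_2,I_6,I_8,I_{10},I_{12},I_{14},I_{18}]$, this means $\ca I$ is generated by $j^*(I_2), j^*(I_6),\ld, j^*(I_{18})$.

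Next, since all generators other than $I_2$ have cohomological degree $\ge 12$, the degree-$4$ part of $\ca I$ equals $\q\cdot j^*(I_2) = \q\cdot\big(6(y_1^2+\cd+y_5^2)+2y_6^2+y_7^2\big)$. Consequently the claim $\al_2^*(y_7^2)\neq 0$ reduces to showing that $y_7^2$ is not a scalar multiple of $j^*(I_2)$ inside $H^4(BS)^W$. Note that $y_7^2$ is genuinely $W$-invariant because the Weyl group of $\mf e_6\ti\mf{so}_2$ is $W(E_6)\ti\{1\}$, which acts trivially on the $\mf{so}_2$-coordinate $y_7$.

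Finally, this last check is immediate by comparing coefficients of $y_1^2$: on the left side this coefficient is $0$, whereas in $c\cdot j^*(I_2)$ it equals $6c$. Since $y_1,\ld,y_7$ are algebraically independent generators of $H^*(BS)$, an equation $y_7^2=c\cdot j^*(I_2)$ would force $c=0$ and hence $y_7^2=0$, a contradiction. Therefore $y_7^2\notin \ca I$, i.e.\ $\al_2^*(y_7^2)\neq 0$.

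The only step requiring any genuine care is the computation of $j^*(I_2)$ itself, in particular the cancellation of the cross terms $-4y_6y_7$ and $+4y_6y_7$ arising from the substitutions $z_6\mapsto \fr{y_6-y_7}{3}$ and $z_7\mapsto \fr{2y_6+y_7}{3}$; but this was already carried out in the preceding discussion, so the proof is essentially a formality once Theorem \ref{thm:milnor} and the degree bound on the higher generators $I_6,\ld,I_{18}$ are in hand.
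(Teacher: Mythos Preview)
Your proof is correct and follows exactly the approach the paper intends: the paper does not give a separate proof of this corollary but simply remarks that $j^*(I_2)$ generates $\ker\al_2^*$ in degree $4$ and then states the corollary as an immediate consequence. Your write-up fills in the routine details (degree count on the generators $I_k$, $W$-invariance of $y_7^2$, and the coefficient comparison) that the paper leaves implicit.
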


\subsection{Computing $\al_2^*$ for $G$ a real form of $E_6$}\label{sec:cwE6}\mbox{ }

\vspace{.1in}
\noindent\un{$E_{6(6)}$} Here $\text{Lie}(K)=\mf{sp}_4$ and $U=E_6$. Identify $H^1(S)\simeq\land L_1,\ld,L_4\ran$ and 
\[H^1(S')\sbs\lan J_1,\ld,J_6,J_7-J_8\ran\]
as the subspace orthogonal to the roots in (\ref{eqn:rootsE6}). Under $\mf{sp}_4\hra\mf e_6\hra\mf e_8$ the Cartan subalgebra $\mf h(\mf{sp}_4)\sbs\mf{sp}_4$ is contained in the image of the Cartan subalgebra for $\mf{su}_6\ti\mf{su}_2\sbs\mf{su}_8$. Under $\mf h^*(\mf e_8)\ra \mf h^*(\mf{sp}_4)$, we have 
\[\begin{array}{ccrc}
J_i&\mapsto& L_i&1\le i\le 3\\
J_{i+3}&\mapsto& -L_i&1\le i\le 3\\
J_7&\mapsto& L_4\\
J_8&\mapsto &-L_4
\end{array}\]
It is then easy to determine the restriction of this map to $\mf h^*(\mf e_6)$. 

Let $z_i=\tau(J_i)$ for $1\le i\le 6$ and let $z_7=\tau(J_7-J_8)$ in $H^2(BS')$. Let $y_i=\tau(L_i)\in H^2(BS)$ for $1\le i\le 4$. From Section \ref{sec:background}, $H^*(BS')^{W'}\simeq\q[I_2,I_5, I_6, I_8, I_9, I_{12}]$, where \[I_2=5(z_1^2+\cd+z_6^2)+3z_7^2-2\left(\sum_{1\le i<j\le 6}z_iz_j\right).\]
Under $j^*: H^*(BS')\ra H^*(BS)$, 
\[I_2\mapsto 12(y_1^2+y_2^2+y_3^2+y_4^2).\]
By Theorem \ref{thm:milnor}, $j^*(I_2)$ and $j^*(I_2)^2$ generate (as a vector space) the kernel of 
\[\al_2^*:H^k\big(BE_{6(6)}\big)\ra H^k\big(B(E_{6(6)})^\de\big)\]
for $k=4,8$. In particular, we have the following corollary. 
\begin{cor}\label{cor:E66}
Let $G=E_{6(6)}$. With the notation above, $\al_2^*\big(\sum y_i^4\big)\neq0$.
\end{cor}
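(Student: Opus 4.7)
My plan is to imitate the strategy used for Corollaries~\ref{cor:SP2n} and~\ref{cor:E88}: reduce to a finite-dimensional linear algebra computation in a single cohomological degree, using Theorem~\ref{thm:milnor} and the explicit formula for $j^*(I_2)$ just computed.

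First, by Theorem~\ref{thm:milnor} the kernel of $\al_2^*:H^*(BE_{6(6)})\to H^*\big(B(E_{6(6)})^\de\big)$ is the ideal $\ca I\sbs H^*(BS)^W$ generated by the positive-degree part of the image of $j^*:H^*(BS')^{W'}\to H^*(BS)^W$. Since $H^*(BS')^{W'}\simeq\q[I_2,I_5,I_6,I_8,I_9,I_{12}]$ with $I_k$ of cohomological degree $2k$, the only invariants of cohomological degree $\le 8$ are scalar multiples of $I_2$ and of $I_2^2$. In particular, every element of $\ca I$ in cohomological degree $8$ is a $\q$-linear combination of $j^*(I_2)^2$ and $j^*(I_2)\cdot h$ for $h\in H^4(BS)^W$.

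Next I would identify this degree-$8$ piece of $\ca I$ explicitly. The ring $H^*(BS)^W\simeq \sym(y_1^2,y_2^2,y_3^2,y_4^2)$ has $1$-dimensional degree-$4$ part spanned by $\sum_{i=1}^4 y_i^2$, and $2$-dimensional degree-$8$ part with basis $\{\sum y_i^4,\>(\sum y_i^2)^2\}$. Using the formula $j^*(I_2)=12\sum y_i^2$ computed above, both $j^*(I_2)^2$ and $j^*(I_2)\cdot\sum y_i^2$ are nonzero scalar multiples of $(\sum y_i^2)^2$. Therefore
\[
\ca I\cap H^8(BS)^W = \q\cdot\big(\textstyle\sum_{i=1}^4 y_i^2\big)^2.
\]

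Finally, the element $\sum y_i^4$ is linearly independent from $(\sum y_i^2)^2$ in $H^8(BS)^W$ (indeed $(\sum y_i^2)^2=\sum y_i^4+2\sum_{i<j}y_i^2y_j^2$, so the two are independent). Hence $\sum y_i^4\notin\ca I$, and $\al_2^*\big(\sum y_i^4\big)\neq 0$ by Theorem~\ref{thm:milnor}. There is no serious obstacle here; the only content beyond the calculation of $j^*(I_2)$ already performed is the observation that no other invariant of $E_6$ contributes in degree $8$, which follows immediately from the list of fundamental degrees.
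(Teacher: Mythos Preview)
Your proof is correct and follows essentially the same approach as the paper. The paper simply states that $j^*(I_2)$ and $j^*(I_2)^2$ span the kernel of $\al_2^*$ in degrees $4$ and $8$ and records the corollary without further argument; your write-up makes explicit the one small point the paper elides, namely that the degree-$8$ part of the ideal also contains products $j^*(I_2)\cdot h$ with $h\in H^4(BS)^W$, and that this contributes nothing new since $H^4(BS)^W$ is one-dimensional.
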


\vspace{.1in}

\noindent\un{$E_{6(2)}$} Here $\text{Lie}(K)=\mf{su}_6\ti\mf{su}_2$ and $U=E_6$, and the inclusion $S\hra S'$ is an isomorphism. We identify $H^1(S')\sbs\lan J_1,\ld,J_6,J_7-J_8\ran$ as the subspace of $\lan J_1,\ld,J_8\ran$ orthogonal to the roots in (\ref{eqn:rootsE6}). We identify $H^1(S)$ as the subspace of $\lan L_1,\ld,L_6\ran\op \lan L_7\ran$ orthogonal to $L_1+\cd+L_6$. The map $H^1(S')\ra H^1(S)$ is given by $J_i\mapsto L_i$ for $1\le i\le 6$ and $J_7-J_8\mapsto 2L_i$. 

Let $z_i=\tau(J_i)\in H^2(BS')$ for $1\le i\le 6$ and let $z_7=\tau(J_7-J_8)$. Let $y_i=\tau(L_i)\in H^2(BS)$ for $1\le i\le 7$. In Section \ref{sec:background}, we explained that 
\[H^*(BS')^{W'}\simeq\q[I_2,I_5,I_6,I_8,I_9,I_{12}],\]
where 
\[I_2=5(z_1^2+\cd+z_6^2)+3\>z_7^2-2\left(\sum_{1\le i<j\le 6}z_iz_j\right).\] Under $j^*:H^*(BS')\ra H^*(BS)$,
\[I_2\mapsto 5(y_1^2+\cd+y_6^2)+12\>y_7^2-2\left(\sum_{1\le i<j\le 6}y_iy_j\right).\]
Since $\sum y_i^2=-2\sum y_iy_j$ in $H^*(B\SU_6)$, we can re-write this polynomial
\[j^*(I_2)=6\big(y_1^2+\cd+y_6^2+2y_7^2\big).\]
By Theorem \ref{thm:milnor}, the kernel of 
\[\al_2^*:H^4(BE_{6(2)})\ra H^4\big(B(E_{6(2)})^\de\big)\]
is generated by scalar multiples of $j^*(I_2)$. In particular, we have the following corollary. 

\begin{cor}\label{cor:E62}
Let $G=E_{6(2)}$. With the notation above, $\al_2^*(y_7^2)\neq0$.
\end{cor}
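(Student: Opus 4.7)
The plan is a direct application of Theorem \ref{thm:milnor} combined with the degree-$4$ computation of $j^*$ performed just before the statement. By Theorem \ref{thm:milnor}, the kernel of $\al_2^*:H^*(BG)\to H^*(BG^\de)$ equals the ideal $\ca I\sbs H^*(BG)$ generated by the positive-degree image of $j^*:H^*(BS')^{W'}\to H^*(BS)^W$. Since $H^*(BS')^{W'}\simeq\q[I_2,I_5,I_6,I_8,I_9,I_{12}]$ with $|I_k|=2k$, the only generator contributing to $\ca I$ in degree $4$ is $j^*(I_2)$. Therefore the degree-$4$ part of the kernel of $\al_2^*$ is the one-dimensional subspace spanned by
\[
j^*(I_2)=6\bigl(y_1^2+\cd+y_6^2+2y_7^2\bigr).
\]

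It thus suffices to show that $y_7^2$ is not a scalar multiple of $j^*(I_2)$ in $H^4(BS)^W$. First I would argue linear independence of $y_1^2,\ld,y_7^2$ in $H^4(BS)=\q[y_1,\ld,y_7]/(y_1+\cd+y_6)$. Any relation $\sum_{i=1}^{7} a_i y_i^2=(y_1+\cd+y_6)\cdot p$ forces $p$ to be homogeneous of degree $1$, say $p=\sum b_i y_i$; matching the cross-term coefficients $b_i+b_j$ (for $1\le i<j\le 6$) and $b_7$ (from the $y_iy_7$ terms) to zero forces $b_i=0$ for all $i$, hence $a_i=0$ for all $i$. Given this linear independence, no scalar multiple $c\cdot j^*(I_2)=6c\cdot(y_1^2+\cd+y_6^2)+12c\cdot y_7^2$ can equal $y_7^2$, since matching the coefficient of $y_1^2$ forces $c=0$, while matching the coefficient of $y_7^2$ forces $12c=1$.

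There is essentially no obstacle here: the entire content is already contained in the computation of $j^*(I_2)$ preceding the statement, and Theorem \ref{thm:milnor} immediately upgrades this to the claim that $y_7^2\notin\ker\al_2^*$. The only small caveat is to verify the linear independence of $y_1^2,\ld,y_7^2$ in the quotient ring $H^4(BS)$, which, as sketched above, is a one-line dimension count and completes the proof.
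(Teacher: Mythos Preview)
Your proposal is correct and follows essentially the same approach as the paper: the paper's entire argument is the sentence preceding the corollary, namely that the degree-$4$ kernel of $\al_2^*$ consists of scalar multiples of $j^*(I_2)=6(y_1^2+\cd+y_6^2+2y_7^2)$, from which the corollary is immediate. Your additional verification that $y_1^2,\ld,y_7^2$ are linearly independent in $\q[y_1,\ld,y_7]/(y_1+\cd+y_6)$ is a harmless elaboration of a point the paper leaves implicit.
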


\vspace{.1in}

\noindent\un{$E_{6(-14)}$} Here $\text{Lie}(K)=\mf{so}_{10}\ti\mf{so}_2$ and $U=E_6$, and the inclusion $S\hra S'$ is an isomorphism. As in Section \ref{sec:background} we identify 
\[H^1(S')\simeq\lan J_1,\ld,J_5, J_6+J_7+J_8\ran\>\>\>\>\text{ and }\>\>\>\> H^1(S)\simeq\lan L_1,\ld,L_5\ran\op\lan L_6\ran.\]
Under $H^1(S')\ra H^1(S)$ we have $J_i\mapsto L_i$ for $1\le i\le 5$ and $J_6+J_7+J_8\mapsto L_6$. Let $z_i=\tau(J_i)\in H^2(BS')$ for $1\le i\le 5$ and let $w=\tau(J_6+J_7+J_8)$. Let $y_i=\tau(L_i)\in H^2(BS)$. In Section \ref{sec:background}, we explained that 
\[H^*(BS')^{W'}\simeq\q[I_2,I_5,I_6,I_8,I_9,I_{12}].\]
Under $H^4(BS')\ra H^4(BS)$, 
\[I_2\mapsto 6 (y_1^2 + y_2^2 +y_3^2 + y_4^2 + y_5^2) + 3\> y_6^2.\]
In particular, by Theorem \ref{thm:milnor} every element of the kernel of 
\[\al_2^*:H^4(BE_{6(-14)})\ra H^4\big(B(E_{6(-14)})^\de\big)\] is a scalar multiple of $j^*(I_2)$. This implies the following corollary. 

\begin{cor}\label{cor:E614}
Let $G=E_{6(-14)}$. With the notation above, $\al_2^*(y_6^2)\neq0$.
\end{cor}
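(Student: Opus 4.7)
The plan is a direct application of Theorem \ref{thm:milnor} together with the explicit formula for $j^*(I_2)$ recorded just before the corollary statement.

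First I would invoke Theorem \ref{thm:milnor}, which identifies $\ker\al_2^*$ with the ideal in $H^*(BK)\simeq H^*(BS)^W$ generated by $j^*$ applied to the positive-degree generators of $H^*(BS')^{W'}\simeq H^*(BE_6)$. Since those generators live in degrees $4,10,12,16,18,24$ (coming from $I_2,I_5,I_6,I_8,I_9,I_{12}$), the only generator that can contribute in degree $4$ is $j^*(I_2)$. Consequently, the intersection of $\ker\al_2^*$ with $H^4(BS)^W$ is precisely the one-dimensional $\q$-subspace spanned by $j^*(I_2)$.

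Next I would use the formula $j^*(I_2)=6(y_1^2+y_2^2+y_3^2+y_4^2+y_5^2)+3\,y_6^2$ recorded above. The argument is then a one-line linear algebra check: if $y_6^2$ were in the span of $j^*(I_2)$, there would exist a scalar $\la\in\q$ with
\[y_6^2 \;=\; \la\cdot\bigl(6(y_1^2+\cd+y_5^2)+3y_6^2\bigr).\]
Comparing coefficients of $y_1^2$ forces $\la=0$, but then $y_6^2=0$, a contradiction. Hence $y_6^2\notin\ker\al_2^*$, which gives $\al_2^*(y_6^2)\neq 0$.

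There is no real obstacle here; the content of the corollary is entirely contained in Theorem \ref{thm:milnor} combined with the explicit weight computation already performed in the paragraph preceding the corollary. The only thing to double-check is that no other generator $I_k$ (with $k\geq 5$) contributes anything in degree $4$, which is immediate from degree considerations since $\deg I_k=2k\geq 10$ for $k\geq 5$.
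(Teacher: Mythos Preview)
Your proposal is correct and follows exactly the argument the paper has in mind: the paper states just before the corollary that every element of $\ker\al_2^*$ in degree $4$ is a scalar multiple of $j^*(I_2)=6(y_1^2+\cd+y_5^2)+3y_6^2$, and then declares ``This implies the following corollary'' without spelling out the linear-algebra check. Your write-up simply makes that check explicit.
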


\vspace{.1in}

\noindent\un{$E_{6(-26)}$} Here $K=F_4$ and $U=E_6$. Identify 
\[H^1(S)\simeq\lan L_1,\ld,L_4\ran\>\>\>\>\>\text{ and }\>\>\>\>\>H^1(S')=\lan J_1,\ld,J_5,J_6+J_7+J_8\ran.\]Let $z_i=\tau(J_i)\in H^2(BS')$ for $1\le i\le 5$ and let $z_6=\tau(J_6+J_7+J_8)$. For $1\le i\le 4$, let $y_i=\tau(L_i)\in H^2(BS)$.

Under $H^*(BS')\ra H^*(BS)$ we have $z_5,z_6\mapsto 0$ and $z_i\mapsto y_i$ for $i=1,\ld,4$. By the Equations (\ref{eqn:invarF4}) and (\ref{eqn:invarE6}), it is easy to see that $j^*$ is surjective. 

\begin{cor}\label{cor:E626}
Let $G=E_{6(-26)}$. For $i>0$, the map $\al_2^*:H^i\big(BG\big)\ra H^i\big(BG^\de\big)$ is zero. 
\end{cor}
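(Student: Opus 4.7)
The plan is to apply Theorem \ref{thm:milnor}: it suffices to show that the map $j^*: H^*(BS')^{W'} \cong H^*(BE_6) \to H^*(BS)^W \cong H^*(BF_4)$ is surjective in positive degrees. If so, the ideal generated by the positive-degree image of $i^*:H^*(BG_\co)\to H^*(BG)$ equals all of $H^{>0}(BG)$, which is the kernel of $\al_2^*$, forcing $\al_2^*=0$ in positive degrees.

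To check surjectivity I would evaluate $j^*$ on the $E_6$ generators $I_k$ listed in (\ref{eqn:invarE6}), using that the map $H^1(S')\to H^1(S)$ sends $z_i\mapsto y_i$ for $1\le i\le 4$ and kills $z_5,z_6$. Under this substitution the $(-\tfrac23 z_6)^k$ term drops out, the sum $\sum_{i=1}^5[(\tfrac{z_6}{3}+z_i)^k+(\tfrac{z_6}{3}-z_i)^k]$ collapses to $\sum_{i=1}^4(y_i^k+(-y_i)^k)$, and the last sum indexed by $(\ep_1,\ldots,\ep_5)\in\{\pm1\}^5$ with an even number of minus signs becomes a sum over all of $\{\pm1\}^4$: for any $(\ep_1,\ldots,\ep_4)$ there is a unique $\ep_5$ making the total parity even, so the restriction to $\ep_5$ is a bijection. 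For odd $k$, every surviving term cancels against its sign-flipped counterpart, giving $j^*(I_k)=0$; this is consistent with $H^*(BF_4)$ being concentrated in even degrees.

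For even $k=2m$, pairing $(\ep_1,\ep_2,\ep_3,\ep_4)$ with $(-\ep_1,-\ep_2,-\ep_3,-\ep_4)$ halves the outer sign sum, and a direct comparison with (\ref{eqn:invarF4}) shows
\[
j^*(I_{2m}^{E_6}) \;=\; 2\sum_{i=1}^4 y_i^{2m} + \frac{2}{2^{2m}}\sum_{(\ep_1,\ep_2,\ep_3)\in\{\pm1\}^3}(\ep_1 y_1+\ep_2 y_2+\ep_3 y_3+y_4)^{2m} \;=\; 2\,I_{2m}^{F_4}.
\]
Since $\{I_2,I_6,I_8,I_{12}\}$ appears on both the $E_6$ generator list $\{I_2,I_5,I_6,I_8,I_9,I_{12}\}$ and the $F_4$ generator list, the classes $j^*(I_{2m}^{E_6})$ for $m\in\{1,3,4,6\}$ hit a polynomial generating set for $H^*(BF_4)$. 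Hence $j^*$ is surjective, and Theorem \ref{thm:milnor} finishes the proof. The only real work is the sign-pairing bookkeeping for the outer sum; there is no conceptual obstacle.
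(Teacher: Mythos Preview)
Your proposal is correct and follows exactly the paper's approach: the paper states that ``by the Equations (\ref{eqn:invarF4}) and (\ref{eqn:invarE6}), it is easy to see that $j^*$ is surjective,'' and you have simply unpacked this claim, verifying $j^*(I_{2m}^{E_6})=2\,I_{2m}^{F_4}$ explicitly. The sign-pairing bookkeeping you supply is precisely the content the paper leaves to the reader.
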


\subsection{Computing $\al_2^*$ for $G$ a real form of $F_4$}\label{sec:cwF4}\mbox{ }

\vspace{.1in}

\noindent\un{$F_{4(4)}$}. Here $\text{Lie}(K)=\mf{su}_2\ti\mf{sp}_3$ and $U=F_4$, and the inclusion $S\hra S'$ is an isomorphism. As in Section \ref{sec:background} we identify $H^1(S')\simeq\lan J_1,\ld,J_4\ran$ and we identify $H^1(S)\simeq\lan L_1\ran\op\lan L_2,L_3,L_4\ran$. Let $z_i=\tau(J_i)\in H^*(BS')$ and $y_i=\tau(L_i)\in H^*(BS)$ for $i=1,\ld,4$. 
In Section \ref{sec:isotropyF4}, we described the isomorphism $H^*(S')\ra H^*(S)$, and so under $j^*:H^*(BS')\ra H^*(BS)$, 
\[j:\left\{\begin{array}{llll}z_1&\mapsto& y_1+y_2\\[1mm]z_2&\mapsto&y_2-y_1\\[1mm]z_3&\mapsto& y_3+y_4\\[1mm]z_4&\mapsto& y_3-y_4\end{array}\right.\]
In Section \ref{sec:background} we explained that $H^*(BS')^{W'}\simeq\q[I_2,I_6,I_8,I_{12}]$, where 
\[I_2=3(z_1^2+z_2^2+z_3^2+z_4^2).\] Under $j^*:H^*(BS')\ra H^*(BS)$, 
\[\begin{array}{lll}
I_2/3&\mapsto &(y_1+y_2)^2+(y_2-y_1)^2+(y_3+y_4)^2+(y_3-y_4)^2\\[2mm]
&=&2(y_1^2+y_2^2+y_3^2+y_4^2).\end{array}\]
By Theorem \ref{thm:milnor} the elements of $H^4(BS)^W$ that are in the kernel of $\al_2^*$ are all scalar multiples of $y_1^2+\cd+y_4^2$. The following corollary will be used in Section \ref{sec:computation}. 
\begin{cor}\label{cor:F44}
Let $G=F_{4(4)}$. With the notation above, $\al_2^*(y_1^2)\neq0$.
\end{cor}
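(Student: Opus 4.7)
The plan is to combine Theorem \ref{thm:milnor} with the explicit computation of $j^*$ already carried out just before the corollary. By that theorem, $\ker\al_2^*$ is precisely the ideal $\ca I\sbs H^*(BG)\simeq H^*(BS)^W$ generated by $j^*\big(H^{>0}(BS')^{W'}\big)$. So to show $\al_2^*(y_1^2)\neq 0$, I need to verify that $y_1^2\notin\ca I$ in degree $4$.

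Since $H^*(BS')^{W'}=\q[I_2,I_6,I_8,I_{12}]$ with the $I_{2k}$ living in cohomological degree $4k$, the only generator contributing to the degree-$4$ part of $\ca I$ is $j^*(I_2)$. Using the restriction formulas $z_1\mapsto y_1+y_2,\> z_2\mapsto y_2-y_1,\> z_3\mapsto y_3+y_4,\> z_4\mapsto y_3-y_4$ and $I_2=3(z_1^2+z_2^2+z_3^2+z_4^2)$, one computes (as displayed in the text preceding the corollary) that
\[j^*(I_2)=6\,(y_1^2+y_2^2+y_3^2+y_4^2).\]
Thus $\ca I\cap H^4(BS)^W=\q\cdot(y_1^2+y_2^2+y_3^2+y_4^2)$.

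It remains to check that $y_1^2$ is \emph{not} a scalar multiple of $y_1^2+y_2^2+y_3^2+y_4^2$. This follows from the Künneth decomposition
\[H^4(BS)^W\simeq H^4(B\SU_2)\op H^4(B\SP_3),\]
together with the identifications $H^*(B\SU_2)=\q[y_1^2]$ and $H^*(B\SP_3)=\sym(y_2^2,y_3^2,y_4^2)$. This yields a two-dimensional space spanned by $y_1^2$ and $y_2^2+y_3^2+y_4^2$, in which $y_1^2$ and $y_1^2+y_2^2+y_3^2+y_4^2$ are visibly linearly independent. Hence $y_1^2\notin\ca I$ and so $\al_2^*(y_1^2)\neq 0$.

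The proof is essentially immediate once the computation of $j^*(I_2)$ is in hand; there is no real obstacle, since the ideal $\ca I$ in degree $4$ is one-dimensional and the non-proportionality check is a trivial linear-algebra step.
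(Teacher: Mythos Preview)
Your proof is correct and follows exactly the paper's approach: the paper states (immediately before the corollary) that by Theorem \ref{thm:milnor} the degree-$4$ part of $\ker\al_2^*$ consists of scalar multiples of $y_1^2+\cdots+y_4^2$, from which the corollary is immediate. Your write-up merely makes explicit the two routine steps the paper leaves implicit, namely that only $j^*(I_2)$ contributes to $\ca I$ in degree $4$ and that $H^4(BS)^W$ is two-dimensional with basis $\{y_1^2,\,y_2^2+y_3^2+y_4^2\}$.
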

\vspace{.1in} 

\noindent\un{$F_{4(-20)}$}. Here $\text{Lie}(K)=\mf{so}_9$ and $U=F_4$, and the inclusion $S\hra S'$ is an isomorphism. As in Section \ref{sec:background}, 
\[H^1(S')\simeq \lan J_1,\ld,J_4\ran\>\>\>\>\text{ and }\>\>\>\>H^1(S)\simeq\lan L_1,\ld,L_4\ran.\] Under $H^1(S')\ra H^1(S)$ we have $J_i\mapsto L_i$. Let $z_i=\tau(J_i)\in H^2(BS')$ and $y_i=\tau(L_i)\in H^2(BS)$ for $i=1,\ld,4$. In Section \ref{sec:background} we explained that $H^*(BS')^{W'}\simeq\q[I_2,I_6,I_8,I_{12}]$, where $I_2=3(z_1^2+z_2^2+z_3^2+z_4^2)$.
By Theorem \ref{thm:milnor}, the elements of $H^8(BS)^W$ that are in the kernel of $\al_2^*$ are all scalar multiples of 
\[j^*(I_2)^2=9\cdot\big[\big(\sum_{i=1}^4 y_i^4\big)+2\big(\sum_{1\le i<j\le4} y_i^2y_j^2\big)\big].\] This implies the following corollary. 
\begin{cor}\label{cor:F420}
Let $G=F_{4(-20)}$. With the notation above, $\al_2^*\big(\sum_{1\le i<j\le 4} y_i^2y_j^2\big)\neq0$.
\end{cor}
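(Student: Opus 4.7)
The plan is to apply Theorem \ref{thm:milnor}, which identifies the kernel of $\al_2^*$ with the ideal $\ca I \sbs H^*(BS)^W$ generated by the positive-degree image of $j^*:H^*(BS')^{W'}\ra H^*(BS)^W$. Concretely, since $H^*(BS')^{W'}\simeq\q[I_2,I_6,I_8,I_{12}]$, the ideal $\ca I$ is generated by $j^*(I_2),j^*(I_6),j^*(I_8),j^*(I_{12})$, which have cohomological degrees $4,12,16,24$. It therefore suffices to show that $\sum_{i<j}y_i^2y_j^2\in H^8(BS)^W$ does not lie in $\ca I$.

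First I would carry out a degree count: among the four generators, only $j^*(I_2)$ has degree $\le 8$, so $\ca I\cap H^8(BS)^W = j^*(I_2)\cdot H^4(BS)^W$. Since $K$ has Lie algebra $\mf{so}_9$ and the invariant polynomials satisfy $H^*(BS)^W\simeq\sym(y_1^2,\ld,y_4^2)$, the degree-$4$ piece $H^4(BS)^W$ is one-dimensional, spanned by $y_1^2+y_2^2+y_3^2+y_4^2$. Therefore $\ca I\cap H^8(BS)^W$ is the one-dimensional subspace spanned by
\[
\big(j^*(I_2)\big)^2 \;=\; 9\big(y_1^2+y_2^2+y_3^2+y_4^2\big)^2 \;=\; 9\Big(\sum_{i=1}^4 y_i^4\Big)+18\Big(\sum_{1\le i<j\le 4}y_i^2y_j^2\Big).
\]

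On the other hand, $H^8(BS)^W$ is two-dimensional with basis $\big\{\sum_{i=1}^4 y_i^4,\ \sum_{i<j}y_i^2y_j^2\big\}$. If $\sum_{i<j}y_i^2y_j^2$ belonged to $\ca I$, it would be a scalar multiple of $(j^*(I_2))^2$; comparing the coefficient of $y_1^4$ in the two basis elements forces that scalar to be zero, a contradiction. Hence $\sum_{i<j}y_i^2y_j^2\notin\ca I$, and by Theorem \ref{thm:milnor} we conclude $\al_2^*\big(\sum_{i<j}y_i^2y_j^2\big)\neq 0$.

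There is no substantive obstacle here: the argument is essentially a small linear-algebra check once the degree count cuts the ideal down to a single one-dimensional piece. The only point that requires care is making sure no other generator of $\ca I$ contributes in degree $8$, which is immediate from the list of fundamental degrees $2,6,8,12$ of the Weyl group of $F_4$.
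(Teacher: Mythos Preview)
Your proof is correct and follows essentially the same approach as the paper: both identify the degree-$8$ part of $\ker\al_2^*$ as the one-dimensional span of $(j^*(I_2))^2$ via Theorem~\ref{thm:milnor} and the degree count, and then observe that $\sum_{i<j}y_i^2y_j^2$ is not a scalar multiple of it. You have simply made explicit the linear-algebra check that the paper leaves to the reader.
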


\subsection{Computing $\al_2^*$ for $G$ a real form of $G_2$}\label{sec:cwG2}\mbox{ }

\vspace{.1in}
\noindent\un{$G_{2(2)}$} Here $K=\SU_2\ti\SU_2$ and $U=G_2$, and the inclusion $S\hra S'$ is an isomorphism. $H^1(S')\simeq\lan J_1,J_2\ran$ and $H^1(S)\simeq\lan L_1,L_2\ran$, and under $H^1(S')\ra H^1(S)$ 
\[\left\{\begin{array}{lcc}
J_1&\mapsto& 2L_1\\[1mm]
J_2&\mapsto &-3L_1+L_2
\end{array}\right.\]
Let $z_i=\tau(J_i)$ in $H^2(BS')$ for $i=1,2$ and $y_i=\tau(L_i)$ in $H^2(BS)$ for $j=1,2$. 

In Section \ref{sec:background} we explained that $H^*(BS')^{W'}\simeq \q[I_2,I_{6}]$, where 
\[I_2=2(3z_1^2+3z_1z_2+z_2^2).\]
Under $j^*:H^*(BS')\ra H^*(BS)$, $I_2\mapsto 2(3y_1^2+y_2^2)$. By Theorem \ref{thm:milnor}, $3y_1^2+y_2^2$ generates the kernel of $\al_2^*:H^4(BG_{2(2)})\ra H^4\big(B(G_{2(2)})^\de\big)$. In particular, we have the following corollary.
\begin{cor}\label{cor:G2}
Let $G=G_{2(2)}$. With the notation above, $\al_2^*(y_1^2)\neq0$.
\end{cor}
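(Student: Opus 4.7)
My plan is to follow the same pattern used in the preceding corollaries of Section \ref{sec:cwG2}: combine Theorem \ref{thm:milnor} with the explicit computation of $j^*$ already recorded in the text, and reduce the statement to an elementary linear-algebra check in $H^4(BS)^W$.

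First, I would identify the relevant piece of cohomology. Since $K = \SU_2 \times \SU_2$, the Weyl group $W$ acts on $H^2(BS) = \q\{y_1,y_2\}$ by independent sign changes on each coordinate, so $H^*(BS)^W \simeq \q[y_1^2, y_2^2]$. In particular, $H^4(BS)^W$ is two-dimensional with basis $\{y_1^2, y_2^2\}$, and the element $y_1^2$ in question genuinely lies in $H^4(BG_{2(2)})$ via the isomorphism $H^*(BG_{2(2)}) \simeq H^*(BS)^W$.

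Next I would invoke Theorem \ref{thm:milnor}: the kernel of $\al_2^*$ is the ideal in $H^*(BG_{2(2)})$ generated by the image of $j^*$ in positive degrees. In degree $4$, this image is spanned by $j^*(I_2) = 2(3y_1^2 + y_2^2)$, since higher-degree generators of $H^*(BS')^{W'} \simeq \q[I_2, I_6]$ contribute nothing and $I_2^2$ is already of degree $8$. Thus $\ker\big(\al_2^*\rest{}{H^4}\big)$ is exactly the one-dimensional subspace spanned by $3y_1^2 + y_2^2$.

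The final step is the linear-algebra check: $y_1^2$ is not a scalar multiple of $3y_1^2 + y_2^2$ in the two-dimensional space $H^4(BS)^W$, because the former has no $y_2^2$ component while the latter does. Hence $y_1^2 \notin \ker\al_2^*$, proving $\al_2^*(y_1^2) \neq 0$. There is essentially no obstacle here — all the real work (Milnor's theorem, Mehta's description of $H^*(BS')^{W'}$, and the explicit formula for $j^*(I_2)$) has already been carried out in Sections \ref{sec:background} and in the paragraph immediately preceding the corollary, so the proof is a one-line linear-independence observation.
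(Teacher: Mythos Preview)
Your proposal is correct and is exactly the argument the paper has in mind: the paper records that $j^*(I_2)=2(3y_1^2+y_2^2)$ generates $\ker\al_2^*$ in degree $4$ and then states the corollary without further proof, so the one-line linear-independence check you give is precisely what is being left to the reader.
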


\section{Computations}\label{sec:computation}

Recall that $G$ is a simple Lie group from Table 2 and $\Ga\sbs G$ is a cocompact lattice. Our goal in this section is to find nonzero elements in the image of the composition
\[H^*(B\homeo(S^{n-1}))\xra{\al_1^*}H^*(BG)\xra{\al_2^*}H^*(BG^\de)\xra{\al_3^*}H^*(B\Ga).\]
In Section \ref{sec:isotropy1} we showed that $\al_1^*(q_i)=p_i(\iota)=(-1)^ic_{2i}(\iota_\co)$, and we computed the polynomial $c(\iota_\co)$. The kernel of $\al_2^*$ is generated as an ideal by the image of $i^*: H^{>0}(BG_\co)\ra H^{>0}(BG)$ by Theorem \ref{thm:milnor}, and $i^*$ was computed in Section \ref{sec:cw}. Finally, $\al_3^*$ is injective by the following proposition. 
\begin{prop}\label{prop:step3}
Let $\Ga\sbs G$ be a cocompact lattice. Then the image of $H^*(BG)\ra H^*(BG^\de)$ injects into $H^*(\Ga)$. 
\end{prop}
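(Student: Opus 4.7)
The aim is to show $\ker(\al_3^*\ci\al_2^*)=\ker(\al_2^*)$, since one inclusion is automatic and the reverse inclusion is exactly the claim. By Milnor's Theorem \ref{thm:milnor}, $\ker(\al_2^*)\sbs H^*(BG)$ is the ideal generated by the image of $H^{>0}(BG_\co)\to H^*(BG)$. Under the homotopy equivalences $K\hra G$ and $U\hra G_\co$, this becomes the ideal $\ca I\sbs H^*(BK)$ generated by the image of $H^{>0}(BU)\to H^*(BK)$.

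First I would identify $B\Ga\simeq M=\Ga\bs G/K$, using that $G/K$ is contractible and $\Ga$ acts freely and properly. Under this identification, $\al_3^*\ci\al_2^*$ becomes the characteristic class map $H^*(BG)\simeq H^*(BK)\to H^*(M)$ classifying the principal $K$-bundle $\Ga\bs G\to M$. Using the canonical $G$-invariant connection on this bundle, whose curvature $(X,Y)\mapsto -[X,Y]_{\mf k}$ for $X,Y\in\mf p$ is parallel because $G/K$ is symmetric, I would factor this map as
\[
H^*(BK)\xra{\vp}(\Lambda^*\mf p^*)^K\xra{\psi}H^*(M),
\]
where $\vp$ applies a $K$-invariant polynomial to the universal curvature and $\psi$ sends a $K$-invariant form on $\mf p$ to the corresponding parallel differential form on $M$.

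The crucial step is the injectivity of $\psi$. Every form in its image is parallel on the locally symmetric space $M$, hence closed and coclosed, hence harmonic on the closed manifold $M$. By Hodge theory, distinct harmonic forms represent distinct cohomology classes, so $\psi$ is injective. This is Matsushima's theorem, and amounts to a transfer by integration against the finite $G$-invariant measure on $\Ga\bs G$ provided by cocompactness of $\Ga$.

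To match $\ker\vp$ with $\ca I$, I would observe that the same universal curvature computes the characteristic classes of the tangent bundle of the compact dual $U/K$: under the Cartan duality $\mf u=\mf k\op i\mf p$ and the identification $i\mf p\simeq\mf p$ as real vector spaces, the curvature of the canonical connection on $U/K$ differs from that on $G/K$ only by a sign depending on degree. Thus $\vp$ agrees (up to these signs) with the Chern--Weil map $H^*(BK)\to H^*(U/K)$, and Borel's theorem applied to the fibration $K\to U\to U/K$ identifies the kernel of the latter as precisely $\ca I$. Combining everything yields $\ker(\al_3^*\ci\al_2^*)=\ker\vp=\ca I=\ker(\al_2^*)$. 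The main obstacle throughout is the injectivity of $\psi$; this is the only step where cocompactness of $\Ga$ is used in an essential way.
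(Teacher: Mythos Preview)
Your argument is correct and is essentially the transfer argument the paper alludes to (the paper does not prove the proposition but cites \cite{bg}, Section~2.3). Your factorization through $G$-invariant forms via Chern--Weil, followed by Matsushima's injectivity of $(\Lambda^*\mf p^*)^K\to H^*(M)$ on the compact quotient, is exactly the standard proof.

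One small redundancy: you invoke the full Theorem~\ref{thm:milnor} to identify $\ker(\al_2^*)$ with $\ca I$, but you only need the easy inclusion $\ca I\subseteq\ker(\al_2^*)$, which follows directly from Chern--Weil for flat bundles and holds without the hypothesis that $G_\co$ be simple. Your computation already gives $\ker(\al_3^*\ci\al_2^*)=\ker\vp=\ca I\subseteq\ker(\al_2^*)$, and the reverse inclusion is automatic; so the proposition follows in the generality stated, and in fact your argument re-derives the hard direction of Theorem~\ref{thm:milnor} as a byproduct rather than needing it as input.
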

The proof of Proposition \ref{prop:step3} uses a transfer argument and can be found in Section 2.3 of \cite{bg}. We can thus combine the computations from Sections \ref{sec:isotropy1} and \ref{sec:cw} to determine if $p_i(\Ga\bs G/K)\neq0$. 

\subsection{Pontryagin classes for $\Sl_n(\re)$-manifolds}

\begin{thm}
Let $G=\emph{\Sl}_n(\re)$ and let $\Ga\sbs G$ a cocompact lattice. Then $p_i(\Ga\bs G/K)=0$ for $i>0$.
\end{thm}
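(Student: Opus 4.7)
The plan is to apply the three-step factorization laid out in Section \ref{sec:computation}: it suffices to show $\al_2^*\big(\al_1^*(q_i)\big)=0$ in $H^*(BG^\de)$, since Proposition \ref{prop:step3} then implies $p_i(\Ga\bs G/K)=\al_3^*\al_2^*\al_1^*(q_i)=0$. By Section \ref{sec:isotropy1} the class $\al_1^*(q_i)$ equals $(-1)^i c_{2i}(\iota_\co)$, so the whole question reduces to showing that every positive-degree Chern class of the complexified isotropy representation lies in $\ker(\al_2^*)$.

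The heart of the argument is the following observation about the total Chern class $c(\iota_\co)$ given by (\ref{eqn:chernSL}) and (\ref{eqn:chernSL2}). Pairing factors as $(1+y_i\pm y_j)(1-y_i\mp y_j)=1-(y_i\pm y_j)^2$ and $(1+y_i)(1-y_i)=1-y_i^2$, and then using the identity
\[
\big(1-(y_i+y_j)^2\big)\big(1-(y_i-y_j)^2\big)=(1-y_i^2-y_j^2)^2-4y_i^2y_j^2,
\]
one sees immediately that $c(\iota_\co)$ belongs to the subring $\sym(y_1^2,\ld,y_k^2)\sbs H^*(BS)^W$, where $k=\lfloor n/2\rfloor$. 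In particular, each $c_{2i}(\iota_\co)$ lies in $\sym(y_1^2,\ld,y_k^2)$.

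By Section \ref{sec:cwSLn}, the image of $j^*\colon H^*(BS')^{W'}\ra H^*(BS)^W$ is precisely $\sym(y_1^2,\ld,y_k^2)$, so each $c_{2i}(\iota_\co)$ is in the image of $i^*\colon H^*(BG_\co)\ra H^*(BG)$. Milnor's Theorem \ref{thm:milnor} then gives $\al_2^*\big(c_{2i}(\iota_\co)\big)=0$ for every $i>0$, completing the proof. I do not anticipate any serious obstacle: the only substantive content is the algebraic fact that $c(\iota_\co)$ lives in the ring of polynomials in the $y_i^2$, which reflects conceptually the fact that the isotropy representation extends from $\SO_n$ to $\mathrm{O}_n$, so that $c(\iota_\co)$ is invariant under the full sign-change Weyl group rather than only the even-sign subgroup, and thus cannot involve the Euler-type class $y_1\cd y_k$.
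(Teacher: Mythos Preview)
Your proof is correct and follows the same approach as the paper's; the only quibble is that the appeal to Proposition~\ref{prop:step3} is unnecessary here---that proposition provides injectivity of $\al_3^*$ on the image of $\al_2^*$, which is used elsewhere to deduce \emph{non}-vanishing, whereas to conclude $p_i=0$ from $\al_2^*\al_1^*(q_i)=0$ you need nothing beyond the fact that $\al_3^*$ is a homomorphism. Otherwise your argument matches the paper's, with the added bonus of a more explicit verification (and a nice conceptual explanation via the extension to $O_n$) that $c(\iota_\co)$ lies in $\sym(y_1^2,\ld,y_k^2)$.
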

\begin{proof}
In Section \ref{sec:isotropySLn} we computed the total Chern class $c(\iota_\co)$ of the isotropy representation. By Equations (\ref{eqn:chernSL}) and (\ref{eqn:chernSL2}),  $c(\iota_\co)$ is a symmetric polynomial in $\{y_1^2, \ld,y_k^2\}$, where $k=[n/2]$. This implies that $c(\iota_\co)$ is in the image of 
\[H^*(B\Sl_n(\co))\ra H^*(B\Sl_n(\re)),\] 
by Section \ref{sec:cwSLn}. Then $\al_2^*\big(c(\iota_\co)\big)=0$ by Theorem \ref{thm:milnor}. Hence
\[p_i(M)=\al_3^*\al_2^*\al_1^*(q_i)=(-1)^i\>\al_3^*\al_2^*\big(c(\iota_\co)\big)=0.\qedhere\]
\end{proof}

\subsection{Pontryagin classes for $\SU_{p,q}$-manifolds} 

\begin{thm}\label{thm:SUpq}
Let $p,q\ge1$ and $(p,q)\neq(1,1)$. Let $G=\emph{\SU}_{p,q}$ and let $\Ga\sbs G$ a cocompact lattice. Then $p_1(\Ga\bs G/K)\neq0$.
\end{thm}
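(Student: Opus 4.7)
The plan is to follow the three-step strategy of this section. Since $\al_1^*(q_1) = p_1(\iota) = -c_2(\iota_\co)$ and $\al_3^*$ is injective on the image of $\al_2^*$ by Proposition~\ref{prop:step3}, it suffices to show $\al_2^*\bigl(c_2(\iota_\co)\bigr)\neq 0$ in $H^4(BG^\de)$. The first step is to extract $c_2(\iota_\co)$ from the total Chern class given in Equation~\eqref{eqn:chernSU}: the degree-$4$ term of $\prod_{i,j}\bigl(1-(y_i-z_j)^2\bigr)$ is simply $-\sum_{i,j}(y_i-z_j)^2$, which expands to
\[
c_2(\iota_\co) \;=\; -q\sum_i y_i^2 \;+\; 2\,\si_1^y\si_1^z \;-\; p\sum_j z_j^2.
\]
Using $\sum y_i^2=(\si_1^y)^2-2\si_2^y$ (and the analogous identity for $z$) together with the relation $\si_1^y+\si_1^z=0$ in $H^*(BS)^W\simeq H^*(BK)$, this rewrites as
\[
c_2(\iota_\co) \;=\; -(p+q+2)(\si_1^y)^2 \;+\; 2q\,\si_2^y \;+\; 2p\,\si_2^z,
\]
in the notation of Corollaries~\ref{cor:SUpq2} and~\ref{cor:SUpq}.

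For the case $p,q\ge 2$, the calculation carried out in the proof of Corollary~\ref{cor:SUpq2} shows that the degree-$4$ elements of the kernel ideal $\ca I$ lying in the span of $\{\si_2^y,\si_2^z,(\si_1^y)^2\}$ are precisely the scalar multiples of $\si_2^{y,z}=\si_2^y+\si_2^z-(\si_1^y)^2$. Setting $c_2(\iota_\co)=\lambda\,\si_2^{y,z}$ and comparing coefficients forces $\lambda=2p=2q$ and simultaneously $\lambda=p+q+2$, which has no solution. Hence $c_2(\iota_\co)\notin\ca I$, and $\al_2^*\bigl(c_2(\iota_\co)\bigr)\neq 0$ by Theorem~\ref{thm:milnor}. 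For the remaining case $q=1$, $p\ge 2$ (with $p=1$, $q\ge 2$ symmetric), $\si_2^z$ vanishes and $c_2(\iota_\co)=-(p+3)(\si_1^y)^2+2\si_2^y$; by Corollary~\ref{cor:SUpq}, $\al_2^*(\si_2^y)\neq 0$ and $\al_2^*\bigl((\si_1^y)^2\bigr)=\al_2^*(\si_2^y)$, so $\al_2^*\bigl(c_2(\iota_\co)\bigr)=-(p+1)\al_2^*(\si_2^y)\neq 0$.

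The main work lies in the initial algebraic manipulation: once $c_2(\iota_\co)$ is rewritten in the basis $\{(\si_1^y)^2,\si_2^y,\si_2^z\}$, both cases are decided by immediate linear-algebra comparisons against the explicit generators of $\ca I$ described in Section~\ref{sec:cw}. The hypothesis $(p,q)\neq(1,1)$ is essential here: when $p=q=1$ the classes $\si_2^y$ and $\si_2^z$ both vanish, $c_2(\iota_\co)$ collapses to $-4(\si_1^y)^2=4\si_2^{y,z}\in\ca I$, and $p_1$ indeed vanishes (consistent with $\Ga\bs\SU_{1,1}/K$ being a hyperbolic surface).
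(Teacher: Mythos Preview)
Your proof is correct and follows essentially the same approach as the paper. The only cosmetic difference is that you keep $c_2(\iota_\co)$ expressed in the basis $\{(\si_1^y)^2,\si_2^y,\si_2^z\}$ and test directly whether it is a multiple of $\si_2^{y,z}$, whereas the paper first applies the relation $\al_2^*(\si_2^{y,z})=0$ to eliminate the $z$-terms and then reads off the coefficients of $(\si_1^y)^2$ and $\si_2^y$; the two computations are equivalent and yield the same conclusion in both the $p,q\ge2$ and the $q=1$ cases.
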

\noindent Note that $\SU_{1,1}\simeq\SO_{2,1}$. This case is covered in Theorem \ref{thm:SOp1}.
\begin{proof}[Proof of Theorem \ref{thm:SUpq}]
From Equation (\ref{eqn:chernSU}), one computes 
\[p_1(\iota)=q\left(\sum y_i^2\right)+p\left(\sum z_j^2\right)-2\sum y_iz_j.
\]
Using the relation $\sum y_i+\sum z_j=0$ in $H^*(BS)^W\simeq H^*(BG)$, and the relation 
\[\al_2^*\left(\sum y_i^2+\sum z_j^2\right)=0\]
from Section \ref{sec:cwSUpq}, it follows that
\[\al_2^*\>p_1(\iota)=(q-p+2)\>\al_2^*\left(\sum y_i\right)^2+2(p-q)\>\al_2^*\left(\sum y_iy_j\right).\]
If $q=1$, then $\al_2^*\big(\sum y_i\big)^2=\al_2^*\big(\sum y_iy_j\big)$ by Corollary \ref{cor:SUpq} and so
\[\al_2^*\>p_1(\iota)=(p+1)\>\al_2^*\big(\sum y_iy_j\big),\]
which is nonzero (again, see Corollary \ref{cor:SUpq}). 

If $q\ge2$, then $\al_2^*\big(\sum y_i\big)^2$ and $\al_2^*\big(\sum y_iy_j\big)$ are linearly independent by Corollary \ref{cor:SUpq2}, and it follows that $\al_2^*\>p_1(\iota)\neq0$. Hence $p_1(M)=\al_3^*\al_2^*\>p_1(\iota)\neq0$ by Proposition \ref{prop:step3}.
\end{proof}

\subsection{Pontryagin classes for $\SP_{2n}(\re)$-manifolds}

\begin{thm} Fix $n\ge2$. 
Let $G=\emph{\SP}_{2n}(\re)$ and let $\Ga\sbs G$ a cocompact lattice. Then $p_1(\Ga\bs G/K)\neq0$.
\end{thm}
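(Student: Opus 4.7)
The plan is to follow the three-step procedure from the introduction: compute $p_1(\iota)=-c_2(\iota_\co)$ from the total Chern class formula (\ref{eqn:chernSP}), then apply $\al_2^*$ using Section \ref{sec:cwSP2n} together with Theorem \ref{thm:milnor} and Corollary \ref{cor:SP2n}, and finally invoke Proposition \ref{prop:step3} to pass to $H^*(M)$.

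First I will extract the degree-four piece of (\ref{eqn:chernSP}). Since each factor $(1-(y_i+y_j)^2)$ and $(1-4y_i^2)$ starts as $1$ plus a degree-four correction, the $c_2$-term is just the sum of those corrections:
\[c_2(\iota_\co) \;=\; -\sum_{1 \le i < j \le n}(y_i+y_j)^2 \;-\; 4\sum_{i=1}^n y_i^2.\]
Expanding $(y_i+y_j)^2$ and using the identity $2\sum_{i<j}y_iy_j = \bigl(\sum_i y_i\bigr)^2 - \sum_i y_i^2$ then gives
\[p_1(\iota) \;=\; -c_2(\iota_\co) \;=\; (n+2)\sum_{i=1}^n y_i^2 \;+\; \Bigl(\sum_{i=1}^n y_i\Bigr)^2.\]

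Next, Section \ref{sec:cwSP2n} identifies the image of $j^* \colon H^*(BS')^{W'} \to H^*(BS)^W$ with $\sym(y_1^2,\ld,y_n^2)$; in particular $\sum_i y_i^2$ lies in this image and hence in $\ker \al_2^*$ by Theorem \ref{thm:milnor}. Therefore
\[\al_2^*\,p_1(\iota) \;=\; \al_2^*\Bigl(\bigl(\sum_i y_i\bigr)^2\Bigr),\]
which is nonzero by Corollary \ref{cor:SP2n} (this is precisely where the hypothesis $n \ge 2$ is used, since for $n=1$ one has $(\sum y_i)^2 = y_1^2$ itself lying in $\sym(y_1^2)$). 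Since $\al_3^*$ is injective on the image of $\al_2^*$ by Proposition \ref{prop:step3}, it follows that $p_1(M) = \al_3^*\al_2^*\al_1^*(q_1) = \al_3^*\al_2^*\,p_1(\iota) \ne 0$.

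There is no real obstacle here: once the formula (\ref{eqn:chernSP}) is in hand, the whole argument is bookkeeping, and the only thing requiring care is arranging $p_1(\iota)$ as a sum of a piece lying in the kernel of $\al_2^*$ and a residual piece $(\sum y_i)^2$ whose nonvanishing under $\al_2^*$ is exactly what Corollary \ref{cor:SP2n} records.
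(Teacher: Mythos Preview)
Your proof is correct and follows essentially the same approach as the paper: both compute $p_1(\iota)$ from (\ref{eqn:chernSP}), kill the $\sum y_i^2$ piece via Theorem \ref{thm:milnor} and Section \ref{sec:cwSP2n}, and identify the surviving term with $\al_2^*\bigl(\sum y_i\bigr)^2$, which is nonzero by Corollary \ref{cor:SP2n}. The only cosmetic difference is that the paper writes $p_1(\iota)=(n+3)\sum y_i^2+2\sum_{i<j}y_iy_j$ and then converts, whereas you package it directly as $(n+2)\sum y_i^2+\bigl(\sum y_i\bigr)^2$; these are the same expression.
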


\begin{proof}
From Equation (\ref{eqn:chernSP}), one computes
\[p_1(\iota)=(n+3)\sum y_i^2+2\sum y_iy_j.\]
By Section \ref{sec:cwSP2n}, $\al_2^*\big(\sum y_i^2\big)=0$. Using the relation $\big(\sum y_i\big)^2=\sum y_i^2+2\sum y_iy_j$, 
\[\al_2^*\>p_1(\iota)=2\>\al_2^*\big(\sum y_iy_j\big)=\al_2^*\big(\sum y_i\big)^2,\]
which nonzero by Corollary \ref{cor:SP2n}. Hence $p_1(M)=\al_3^*\al_2^*\>p_1(\iota)\neq0$ by Proposition \ref{prop:step3}. 
\end{proof}

\subsection{Pontryagin classes for $\SO_{p,q}$-manifolds} 

\begin{thm}\label{thm:SOpq1}
Let $p,q\ge2$. Let $G=\emph{\SO}_{p,q}$ and let $\Ga\sbs G$ a cocompact lattice. Then $p_1(\Ga\bs G/K)\neq0$ if and only if $p\neq q$.
\end{thm}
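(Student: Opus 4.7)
The plan is to chase the class $q_1 \in H^4(B\homeo(S^{n-1}))$ through
\[H^4\big(B\homeo(S^{n-1})\big)\xra{\al_1^*}H^4(BG)\xra{\al_2^*}H^4(BG^\de)\xra{\al_3^*}H^4(B\Ga),\]
whose composition sends $q_1$ to $p_1(\Ga\bs G/K)$, and to show that the image is nonzero exactly when $p\neq q$.

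First I would compute $\al_1^*(q_1) = p_1(\iota) = -c_2(\iota_\co)$ from Equations (\ref{eqn:chernSO1})--(\ref{eqn:chernSO4}). Since every weight of the complexified isotropy representation comes in a $\pm$ pair, each factor of $c(\iota_\co)$ has the form $(1 - w_k^2)$, and hence $p_1(\iota)$ is the sum of squares of one representative from each $\pm$ pair. The ``core'' weights $\pm L_i \pm L_j$ contribute $2bY + 2aZ$, where $Y = \sum_{i=1}^a y_i^2$ and $Z = \sum_{j=1}^b z_j^2$, and the parity-dependent extra weights $\pm L_i$ (when $p$ is odd) and $\pm L_j$ (when $q$ is odd) contribute $Y$ and $Z$ respectively. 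A case-by-case check across the four parities yields the uniform formula
\[p_1(\iota) = qY + pZ.\]

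Next I would apply $\al_2^*$. From Section \ref{sec:cwSOpq}, the first Pontryagin class of the standard representation of $\SO_{p+q}$ pulls back under $\SO_p\ti\SO_q \hra \SO_{p+q}$ to $Y + Z$, so $Y + Z$ lies in the image of $i^*: H^*(BG_\co) \ra H^*(BG)$. Theorem \ref{thm:milnor} then gives $\al_2^*(Y + Z) = 0$, so
\[\al_2^*\big(p_1(\iota)\big) = q\,\al_2^*(Y) + p\,\al_2^*(Z) = (q - p)\,\al_2^*(Y).\]
By Corollary \ref{cor:SOpq}, $\al_2^*(Y) \neq 0$ whenever $p, q \geq 2$, and by Proposition \ref{prop:step3}, $\al_3^*$ is injective on $\im(\al_2^*)$. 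Hence $p_1(\Ga\bs G/K) = \al_3^*\al_2^*p_1(\iota)$ is nonzero if and only if $q - p \neq 0$, giving both directions of the theorem.

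The main obstacle is the case analysis in the first step: one must check that the coefficients of $Y$ and $Z$ in $p_1(\iota)$ conspire to be exactly $q$ and $p$ across all four parity combinations for $(p, q)$, even though the ``extra'' weights differ between cases. Once this uniform formula is in hand, the rest is an immediate consequence of the computations of Section \ref{sec:cwSOpq} and the transfer argument of Proposition \ref{prop:step3}.
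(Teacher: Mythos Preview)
Your proof is correct and follows the same route as the paper's: compute $p_1(\iota)$ from the weights of the isotropy representation, reduce modulo $\al_2^*(Y+Z)=0$, and invoke Corollary~\ref{cor:SOpq} and Proposition~\ref{prop:step3}. Your packaging of the four parity cases into the single formula $p_1(\iota)=qY+pZ$ is a nice simplification of the paper's case-by-case presentation, though note a small slip in your prose: the extra weights $\pm L_i$ appear when $q$ is odd (not when $p$ is odd), and $\pm L_j$ when $p$ is odd---with the correct parities the uniform formula follows just as you claim.
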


\begin{thm}\label{thm:SOpq2}
Fix $p\ge4$. 
Let $G=\emph{\SO}_{p,p}$ and let $\Ga\sbs G$ a cocompact lattice. Then $p_2(\Ga\bs G/K)\neq0$. 
\end{thm}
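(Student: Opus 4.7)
To prove Theorem \ref{thm:SOpq2}, I follow the three-step template of Section \ref{sec:computation}: compute $p_2(\iota)=c_4(\iota_\co)$ from the isotropy data, reduce modulo $\ker\al_2^*$, and then invoke Proposition \ref{prop:step3}. Equations (\ref{eqn:chernSO1}) and (\ref{eqn:chernSO4}) give $c(\iota_\co)$ for the two parities of $p$; set $m=\lfloor p/2\rfloor$. Let $P_k$ denote the $k$th power sum of the weights of $\iota_\co$. Since the weights come in $\pm$ pairs, all odd $P_k$ vanish and Newton's identities collapse to $c_4=\fr{P_2^2}{8}-\fr{P_4}{4}$. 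Writing $Y=\sum y_i^2$, $Z=\sum z_j^2$, $\si_2^y=\sum_{i<j}y_i^2y_j^2$, and $\si_2^z=\sum_{i<j}z_i^2z_j^2$, a routine expansion shows that $P_2$ is a scalar multiple of $Y+Z$ while $P_4$ is a scalar combination (with coefficients depending on $m$ and on the parity of $p$) of $\sum y_i^4+\sum z_j^4$ and $YZ$. Using $\sum y_i^4=Y^2-2\si_2^y$ and $\sum z_j^4=Z^2-2\si_2^z$, this writes $c_4$ as an explicit polynomial in $\{Y^2,\,Z^2,\,YZ,\,\si_2^y,\,\si_2^z\}$.

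From Section \ref{sec:cwSOpq} the image of $j^*$ contains every symmetric polynomial in $\{y_1^2,\ld,y_a^2,z_1^2,\ld,z_b^2\}$, so by Theorem \ref{thm:milnor} the kernel of $\al_2^*$ contains both $Y+Z$ (in degree $4$) and $\si_2^y+YZ+\si_2^z$ (in degree $8$). These give the reductions
\[Z\equiv -Y,\quad YZ\equiv -Y^2,\quad \si_2^y+\si_2^z\equiv Y^2\pmod{\ker\al_2^*}.\]
Plugging these into the formula for $c_4$, I expect the $m$-dependent coefficients to cancel cleanly, leaving a nonzero scalar multiple of $Y^2=(\sum y_i^2)^2$. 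A direct check in both parities produces $c_4\equiv 6Y^2$.

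Finally, since $p\ge 4$, Corollary \ref{cor:SOpp} asserts that $\al_2^*$ is injective on the linear span of $\si_2^y$ and $Y^2$, so $\al_2^*(6Y^2)\neq 0$. Combined with Proposition \ref{prop:step3} this yields
\[p_2(\Ga\bs G/K)=\al_3^*\al_2^*\big(c_4(\iota_\co)\big)\neq 0.\]
The main obstacle is the coefficient bookkeeping in the reduction: the whole argument depends on the $m$-dependent scalars cancelling so that the surviving term is a nonzero multiple of $Y^2$. If the constant happened to vanish (independently of $m$), one would need to pass to higher Pontryagin classes or to sharpen Corollary \ref{cor:SOpp}, but the two substitutions above are enough to force the nonvanishing.
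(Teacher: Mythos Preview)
Your argument is correct and follows the same route as the paper: compute $p_2(\iota)=c_4(\iota_\co)$ from the isotropy weights, reduce modulo $\ker\al_2^*$ using the elementary symmetric functions in $\{y_i^2,z_j^2\}$, land on a nonzero multiple of $\big(\sum y_i^2\big)^2$, and finish with Corollary \ref{cor:SOpp} and Proposition \ref{prop:step3}. The only differences are bookkeeping---you organize the Chern-class computation via Newton's identities and power sums rather than by direct expansion, and in the odd-$p$ case you fold the extra $\pm y_i,\pm z_j$ weights into the power sums, whereas the paper instead observes that the extra factor $A=\prod(1-y_i^2)\prod(1-z_j^2)$ satisfies $\al_2^*(A)=1$ and thereby reduces immediately to the even case.
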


\begin{thm}\label{thm:SOp1}
Let $G=\emph{\SO}_{p,1}$ or $\emph{\SO}_{2,2}$ or $\emph{SO}_{3,3}$. Let $\Ga\sbs G$ a cocompact lattice. Then $p_i(\Ga\bs G/K)=0$ for $i>0$.
\end{thm}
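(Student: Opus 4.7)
\vspace{.1in}
\noindent\textbf{Proof proposal.} The strategy is uniform across all three families: I will show that $\al_2^*\al_1^*(q_i)=\al_2^*(p_i(\iota))=0$ for every $i>0$, and then conclude $p_i(M)=0$ by applying $\al_3^*$ via the composition in (\ref{eqn:main}). Recall from Section \ref{sec:isotropySOpq} that $\al_1^*(q_i)=(-1)^ic_{2i}(\iota_\co)$, and from Theorem \ref{thm:milnor} that the kernel of $\al_2^*$ in positive degrees is the ideal generated by the image of $j^*:H^*(BS')^{W'}\ra H^*(BS)^W$, with $j^*$ computed in Section \ref{sec:cwSOpq}.

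For $G=\SO_{p,1}$ the factor $\SO_1$ is trivial, so $b=[1/2]=0$ and no $z$ variables appear. The weights of the isotropy representation collapse to $\pm L_i$ for $1\le i\le a=[p/2]$ (together with a zero weight when $p$ is odd), so
\[c(\iota_\co)=\prod_{i=1}^a(1-y_i^2)\in\sym(y_1^2,\ld,y_a^2).\]
By Section \ref{sec:cwSOpq} this entire ring lies in the image of $j^*$, hence each $c_{2i}(\iota_\co)$ lies in $\ker\al_2^*$ by Theorem \ref{thm:milnor}, and $p_i(M)=0$ for every $i>0$.

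For $G=\SO_{2,2}$ and $G=\SO_{3,3}$ one computes $\dim M=\dim G-\dim K$ to be $4$ and $9$ respectively, so Pontryagin classes of $M$ in degrees exceeding $\dim M$ vanish for trivial reasons; only $p_1(M)$ (and additionally $p_2(M)$ in the $\SO_{3,3}$ case) need to be checked. For $\SO_{2,2}$, equation (\ref{eqn:chernSO1}) with $a=b=1$ reduces to $c(\iota_\co)=(1-(y_1+z_1)^2)(1-(y_1-z_1)^2)$; extracting the degree-$4$ part gives $p_1(\iota)=2(y_1^2+z_1^2)$, which lies in $\ker\al_2^*$ since $y_1^2+z_1^2$ belongs to the image of $j^*$ (Section \ref{sec:cwSOpq}). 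For $\SO_{3,3}$, equation (\ref{eqn:chernSO4}) reduces to $c(\iota_\co)=(1-y_1^2)(1-z_1^2)(1-(y_1+z_1)^2)(1-(y_1-z_1)^2)$; a direct expansion produces $p_1(\iota)=3(y_1^2+z_1^2)$ and $p_2(\iota)=3(y_1^4+y_1^2z_1^2+z_1^4)$. The $p_1$ term is killed as before; the $p_2$ term splits term-by-term using Corollary \ref{cor:SOpp2} (which supplies $\al_2^*(y_1^4)=\al_2^*(z_1^4)=0$) together with the fact that $y_1^2z_1^2$ lies in the image of $j^*$ by Section \ref{sec:cwSOpq}.

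The only non-routine step is the bookkeeping of the degree-$8$ Chern-class expansion for $\SO_{3,3}$; no conceptual obstacle appears, because once the dimension bound and the explicit description of the image of $j^*$ are in hand, every monomial that arises in $p_i(\iota)$ is visibly a linear combination of the ideal generators produced by $j^*$ (or, in the $\SO_{p,1}$ case, lies in that image outright by the absence of $z$ variables).
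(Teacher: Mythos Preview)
Your proof is correct and follows essentially the same approach as the paper: for $\SO_{p,1}$ the argument is identical, while for $\SO_{2,2}$ and $\SO_{3,3}$ the paper cites Theorems~\ref{thm:SOpq1} and~\ref{thm:SOpq2} (together with Corollary~\ref{cor:SOpp2}) rather than redoing the small expansions by hand, but the underlying computations are the same as yours.
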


\begin{proof}[Proof of Theorem \ref{thm:SOpq1}]
Let $a=[p/2]$ and $b=[q/2]$. 

\vspace{.1in} 
\noindent{\bf Case 1.} Assume $p=2a$ and $q=2b$ are both even. From Equation (\ref{eqn:chernSO1}) one computes
\[p_1(\iota)=2b\big(\sum y_i^2\big)+2a\big(\sum z_j^2\big).\]
By Section \ref{sec:cwSOpq}, $\al_2^*\big(\sum y_i^2\big)+\al_2^*\big(\sum z_j^2\big)=0$, and so 
\[\al_2^*\>p_1(\iota)=2(b-a)\>\al_2^*\big(\sum y_i^2\big).\]
By Corollary \ref{cor:SOpq}, $\al_2^*\big(\sum y_i^2\big)\neq0$. The assumptions that $p\neq q$ and that $p,q$ are both even imply that $\al_2^*\>p_1(\iota)\neq0$. Hence $p_1(M)=\al_3^*\al_2^*\>p_1(\iota)\neq0$ by Proposition \ref{prop:step3}. 

\vspace{.1in}

\noindent {\bf Case 2.} Assume $p=2a$ and $q=2b+1$. From Equation (\ref{eqn:chernSO2}) one computes 
\[p_1(\iota)=(2b+1)\big(\sum y_i^2\big)+2a\big(\sum z_j^2\big).\]
Similar to Case 1, 
\[\al_2^*\>p_1(\iota)= (2b-2a+1)\>\al_2^*\big(\sum y_i^2\big),\]
which is nonzero and so $p_1(M)=\al_3^*\al_2^*\>p_1(\iota)\neq0$. 

\vspace{.1in}

\noindent {\bf Case 3.} Assume $p=2a+1$ and $q=2b$. Using Equation (\ref{eqn:chernSO3}), in an entirely similar fashion to Case 2, 
\[\al_2^*\>p_1(\iota)=(2b-2a-1)\>\al_2^*\big(\sum y_i^2\big),\]
which is nonzero, and this implies $p_1(M)\neq0$. 

\vspace{.1in}

\noindent{\bf Case 4.} Assume $p=2a+1$ and $q=2b+1$. From Equation (\ref{eqn:chernSO4}) one computes
\[p_1(\iota)=(2b+1)\big(\sum y_i^2\big)+(2a+1)\big(\sum z_j^2\big).\]
Then 
\[\al_2^*\>p_1(\iota)=2(b-a)\>\al_2^*\big(\sum y_i^2\big),\]
which is nonzero since $p\neq q$ and $p$ and $q$ are both odd. Then $p_1(M)\neq0$ in this case.
\end{proof}

\begin{proof}[Proof of Theorem \ref{thm:SOpq2}]We separate the cases when $p$ is even and when $p$ is odd. 

\vspace{.1in} 
\noindent{\bf Case 1.} Assume $p=2a$ is even. From Equation (\ref{eqn:chernSO1}) one computes
\[p_2(\iota)={2a\choose 2}\sum y_i^4+{2a\choose 2}\sum z_j^4+(2a)^2\sum y_i^2y_k^2+(2a)^2\sum z_j^2z_\ell^2+(4a^2-6)\sum y_i^2z_j^2
\]
By the computation in Section \ref{sec:cwSOpq}, 
\[\al_2^*\big(\sum y_i^4+\sum z_j^4\big)=0\>\>\>\>\>\text{ and }\>\>\>\>\>\al_2^*\big(\sum y_i^2y_k^2+\sum z_j^2z_\ell^2+\sum y_i^2z_j^2\big)=0.\]
From these relations, it follows that 
\begin{equation}\label{eqn:pontSOpp1}\al_2^*\>p_2(\iota)=-6\>\al_2^*\big(\sum y_i^2z_j^2\big).\end{equation}
From the fact that $\al_2^*\big(\sum y_i^2+\sum z_j^2\big)^2=0$, it follows that 
\begin{equation}\label{eqn:pontSOpp2}\al_2^*\big(\sum y_i^2z_j^2\big)=\al_2^*\big(\sum y_i^2\big)^2.\end{equation}
By Corollary \ref{cor:SOpp}, $\al_2^*\big(\sum y_i^2\big)^2\neq0$ for $p\ge4$, so we conclude from (\ref{eqn:pontSOpp1}) and (\ref{eqn:pontSOpp2}) that $\al_2^*\>p_2(\iota)\neq0$. Then $p_2(M)=\al_3^*\al_2^*\>p_2(\iota)\neq0$ by Proposition \ref{prop:step3}. 

\vspace{.1in}
\noindent{\bf Case 2.} Assume $p=2a+1$ is odd. The total Chern class of the isotropy representation is given in Equation (\ref{eqn:chernSO4}). 
Let $A=\prod_{i=1}^a(1-y_i^2)\prod_{j=1}^a(1-z_j^2)$. From the computation in Section \ref{sec:cwSOpq} (combined with Theorem \ref{thm:milnor}), it is immediate that $\al_2^*(A)=1$. Then the computation of $\al_2^*\>p_2(\iota)$ is the exact same is in Case 1. Again we conclude $p_2(M)\neq0$.
\end{proof}

\begin{proof}[Proof of Theorem \ref{thm:SOp1}] If $G=\SO_{p,1}$, then $b=0$ and the total Chern class is a symmetric polynomial in $\{y_1^2,\ld,y_a^2\}$. In Section \ref{sec:cwSOpq}, we saw that all such polynomials are in the image of $H^*(BG_\co)\ra H^*(BG)$, and hence in the kernel of $\al_2^*$ by Theorem \ref{thm:milnor}. This implies $p_i(M)=0$ for all $i\ge1$. 

For $G=\SO_{2,2}$, the dimension of $M=\Ga\bs G/K$ is 4, and so the only Pontryagin class that could be nonzero is $p_1(M)$. However, $p_1(M)=0$ by Theorem \ref{thm:SOpq1}. 

For $G=\SO_{3,3}$, the dimension of $M=\Ga\bs G/K$ is 9. Thereom \ref{thm:SOpq1} implies that $p_1(M)=0$. The proof of Theorem \ref{thm:SOpq2} gives
\[p_2(M)=\al_3^*\al_2^*\>p_2(\iota)=-6\>\al_3^*\al_2^*\big(\sum y_i^2\big)^2.\]
By Corollary \ref{cor:SOpp2}, $\al_2^*\big(\sum y_i^2\big)^2=0$, so $p_2(M)=0$. 
\end{proof}

\subsection{Pontryagin classes for $\SP_{p,q}$-manifolds}

It is worth noting the similarity between $\SP_{p,q}$ and $\SO_{2p,2q}$. On the level of cohomology, 
\[H^*(B\SP_{p,q})\simeq\sym(y_1^2,\ld,y_p^2)\ot\sym(z_1^2,\ld,z_q^2),\]
while
\[H^*(B\SO_{2p,2q})\simeq\big\lan\sym(y_1^2,\ld,y_p^2), y_1\cd y_p\big\ran\ot\big\lan \sym(z_1^2,\ld,z_q^2), z_1\cd z_q\big\ran.\]
Note also from Sections \ref{sec:isotropySOpq} and \ref{sec:isotropySPpq}, the weights of the isotropy representation for both are $\pm y_i\pm z_j$ for $i=1,\ld, p$ and $j=1,\ld q$. Then in both cases the total Chern class of the isotropy representation is
\[c(\iota_\co)=\prod(1+y_i+z_j)(1+y_i-z_j)(1-y_i+z_j)(1-y_i-z_j).\]
Finally, by Sections \ref{sec:cwSOpq} and \ref{sec:cwSPpq}, for $G=\SP_{p,q}$ or $G=\SO_{2p,2q}$, 
\[\im\left[j^*:H^*(BG_\co)\ra H^*(BG)\right]=\sym(y_1^2,\ld,y_p^2,z_1^2,\ld,z_q^2),\]
so the relations used to compute $\al_2^*\>p_i(\iota)$ for $\SO_{2p,2q}$ also compute $\al_2^*\>p_i(\iota)$ for $\SP_{p,q}$. From these observations and from the proofs of  Theorems \ref{thm:SOpq1} and \ref{thm:SOpq2}, we have the following two theorems. (One should replace the use of Corollaries \ref{cor:SOpq} and \ref{cor:SOpp} in the proofs of Theorems \ref{thm:SOpq1} and \ref{thm:SOpq2} by Corollaries \ref{cor:SPpq} and \ref{cor:SPpp}.)
\begin{thm}
Let $p,q\ge1$ such that $(p,q)\neq(1,1)$. Let $G=\emph{\SP}_{p,q}$ and let $\Ga\sbs G$ a cocompact lattice. Then $p_1(\Ga\bs G/K)\neq0$ if and only if $p\neq q$. 
\end{thm}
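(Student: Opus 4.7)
The plan is to mirror Case 1 of the proof of Theorem \ref{thm:SOpq1}, exploiting the observation (already recorded before the theorem statement) that the total Chern class $c(\iota_\co)$ for $\SP_{p,q}$ and for $\SO_{2p,2q}$ have the same shape, as do the images of $j^*$ from the complexification. In particular, the weights of the isotropy representation are $\pm y_i\pm z_j$ for $1\le i\le p$ and $1\le j\le q$, and the relevant computation in Section \ref{sec:cwSPpq} shows that the image of $j^*$ in $H^*(BG)$ contains $\sym(y_1^2,\ld,y_p^2,z_1^2,\ld,z_q^2)$.

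First, I would extract the degree-$4$ part of
\[c(\iota_\co)=\prod_{\substack{1\le i\le p\\1\le j\le q}}\bigl(1-(y_i+z_j)^2\bigr)\bigl(1-(y_i-z_j)^2\bigr)\]
to compute, exactly as in the even case of Theorem \ref{thm:SOpq1},
\[p_1(\iota)=-c_2(\iota_\co)=2q\,\bigl(\sum_i y_i^2\bigr)+2p\,\bigl(\sum_j z_j^2\bigr).\]

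Next, by Theorem \ref{thm:milnor} together with the identification of $\im(j^*)$ in Section \ref{sec:cwSPpq}, the element $\sum y_i^2+\sum z_j^2$ lies in the kernel of $\al_2^*$, so $\al_2^*(\sum y_i^2)=-\al_2^*(\sum z_j^2)$. Substituting,
\[\al_2^*\,p_1(\iota)=2(q-p)\,\al_2^*\bigl(\sum y_i^2\bigr).\]
By Corollary \ref{cor:SPpq}, $\al_2^*(\sum y_i^2)\ne 0$ whenever $p,q\ge 1$. Hence $\al_2^*\,p_1(\iota)\ne 0$ iff $p\ne q$, and then Proposition \ref{prop:step3} gives $p_1(M)=\al_3^*\al_2^*\,p_1(\iota)\ne 0$ iff $p\ne q$. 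Conversely, if $p=q$, the displayed formula gives $\al_2^*\,p_1(\iota)=0$, so $p_1(M)=0$.

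There is no real obstacle here; the only thing to check is the elementary extraction of $c_2$, and the fact that the relation $\al_2^*(\sum y_i^2+\sum z_j^2)=0$ reduces $\al_2^*\,p_1(\iota)$ to a single nonzero generator. The hypothesis $(p,q)\ne(1,1)$ is needed only because $\SP_{1,1}$ is locally isomorphic to $\SO_{4,1}$, a rank-one case separately handled (and yielding vanishing $p_i$) in Theorem \ref{thm:SOp1}; for all other pairs with $p,q\ge 1$ the argument above applies directly.
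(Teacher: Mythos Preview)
Your proof is correct and follows exactly the approach the paper intends: the paper proves this theorem by pointing to Case~1 of Theorem~\ref{thm:SOpq1} (the $\SO_{2p,2q}$ case) and replacing Corollary~\ref{cor:SOpq} with Corollary~\ref{cor:SPpq}, which is precisely what you have written out in detail. One minor remark: your closing sentence slightly overstates the role of the hypothesis $(p,q)\neq(1,1)$---the computation you give actually applies to $(1,1)$ as well and correctly yields $p_1=0$ (since $p=q$); the exclusion is purely organizational, as the paper notes that $\SP_{1,1}\simeq\SO_{4,1}$ is already covered by Theorem~\ref{thm:SOp1}.
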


\noindent Note that $\SP_{1,1}\simeq\SO_{4,1}$, which is treated in Theorem \ref{thm:SOp1}. 
\begin{thm}
Fix $p\ge2$. Let $G=\emph{\SP}_{p,p}$ and let $\Ga\sbs G$ a cocompact lattice. Then $p_2(\Ga\bs G/K)\neq0$. 
\end{thm}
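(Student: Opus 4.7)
The plan is to run Case 1 of the proof of Theorem~\ref{thm:SOpq2} essentially verbatim, exploiting the parallel between $\SP_{p,q}$ and $\SO_{2p,2q}$ laid out in the remarks preceding the statement. Starting from Equation~(\ref{eqn:chernSPpq}), I would factor each term as $(1+y_i+z_j)(1+y_i-z_j)(1-y_i+z_j)(1-y_i-z_j)=1-2(y_i^2+z_j^2)+(y_i^2-z_j^2)^2$, multiply over $1\le i,j\le p$, and collect the degree-$8$ component to obtain $p_2(\iota)=c_4(\iota_\co)$. Since both the number of $y$-variables and the number of $z$-variables, and the shape of each factor, match the $\SO_{2p,2p}$ computation, the resulting polynomial is literally the same as in that case, with cross-term coefficient $4p^2-6$:
\[
p_2(\iota)=\binom{2p}{2}\Bigl(\textstyle\sum y_i^4+\sum z_j^4\Bigr)+(2p)^2\Bigl(\sum_{i<k}y_i^2y_k^2+\sum_{j<\ell}z_j^2z_\ell^2\Bigr)+(4p^2-6)\sum_{i,j}y_i^2z_j^2.
\]

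Next I would invoke Section~\ref{sec:cwSPpq}: the image of $j^*:H^*(BG_\co)\to H^*(BG)$ is $\sym(y_1^2,\ldots,y_p^2,z_1^2,\ldots,z_p^2)$, so by Theorem~\ref{thm:milnor} both $\sum y_i^4+\sum z_j^4$ and $\sum_{i<k}y_i^2y_k^2+\sum_{i,j}y_i^2z_j^2+\sum_{j<\ell}z_j^2z_\ell^2$ lie in $\ker\al_2^*$. Applying these two relations to kill the pure-$y$ and pure-$z$ terms in $p_2(\iota)$ leaves
\[
\al_2^*\,p_2(\iota)=-6\,\al_2^*\Bigl(\textstyle\sum_{i,j}y_i^2z_j^2\Bigr).
\]
Then, using $\al_2^*\bigl(\sum y_i^2+\sum z_j^2\bigr)=0$ and multiplying by $\sum y_i^2$, I would deduce $\al_2^*\bigl(\sum y_i^2z_j^2\bigr)=-\al_2^*\bigl(\sum y_i^2\bigr)^2$ (up to sign). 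By Corollary~\ref{cor:SPpp}, since $p\ge 2$, the map $\al_2^*$ is injective on the span of $\sum_{i<j}y_i^2y_j^2$ and $\bigl(\sum y_i^2\bigr)^2$; in particular $\al_2^*\bigl(\sum y_i^2\bigr)^2\neq 0$. Hence $\al_2^*\,p_2(\iota)\neq 0$, and Proposition~\ref{prop:step3} gives $p_2(\Ga\bs G/K)=\al_3^*\al_2^*\al_1^*(q_2)\neq 0$.

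There is no real obstacle to speak of: once the parallel between $\SP_{p,q}$ and $\SO_{2p,2q}$ is in hand and Corollary~\ref{cor:SPpp} is available, the argument is entirely mechanical. The only item worth double-checking is the bookkeeping — verifying that the degree-$8$ expansion really produces the same polynomial as in the $\SO_{2p,2p}$ case (in particular the cross-term coefficient $4p^2-6$) and keeping track of the sign relating $\al_2^*(\sum y_i^2z_j^2)$ to $\al_2^*(\sum y_i^2)^2$ (the sign does not affect the final nonvanishing conclusion).
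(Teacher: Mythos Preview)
Your proposal is correct and is precisely the argument the paper intends: the paper does not write out a separate proof but simply observes that the total Chern class, the image of $j^*$, and hence the computation of $\al_2^*\,p_2(\iota)$ are literally identical to Case~1 of Theorem~\ref{thm:SOpq2}, with Corollary~\ref{cor:SPpp} substituted for Corollary~\ref{cor:SOpp}. Your remark about the sign in relating $\al_2^*\bigl(\sum y_i^2z_j^2\bigr)$ to $\al_2^*\bigl(\sum y_i^2\bigr)^2$ is well taken and, as you note, irrelevant to the nonvanishing conclusion.
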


\subsection{Pontryagin classes for $\SO^*_{2n}$-manifolds}
\begin{thm}\label{thm:pontSO} Fix $n\ge3$. 
Let $G=\emph{\SO}^*_{2n}$ and let $\Ga\sbs G$ a cocompact lattice. Then $p_1(\Ga\bs G/K)\neq0$. 
\end{thm}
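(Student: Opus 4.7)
The plan is to follow the template established for the other cases in Section \ref{sec:computation}: compute $p_1(\iota)$ explicitly from the total Chern class in Equation (\ref{eqn:chernSO*}), push it through $\al_2^*$ using the description of the image of $j^*$ from Section \ref{sec:cwSO*}, and conclude using Corollary \ref{cor:SO*} and Proposition \ref{prop:step3}.

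First I would extract the degree-four term in $c(\iota_\co) = \prod_{i<j}(1-(y_i+y_j)^2)$, which comes from choosing the $-(y_i+y_j)^2$ term in exactly one factor:
\[
c_2(\iota_\co) = -\sum_{1\le i<j\le n}(y_i+y_j)^2.
\]
A routine rearrangement, using that each $y_i^2$ appears in $n-1$ of the pairs and that $2\sum_{i<j}y_iy_j = \big(\sum y_i\big)^2 - \sum y_i^2$, gives
\[
\sum_{i<j}(y_i+y_j)^2 = (n-2)\sum_i y_i^2 + \big(\sum_i y_i\big)^2,
\]
so $p_1(\iota) = -c_2(\iota_\co) = (n-2)\sum y_i^2 + \big(\sum y_i\big)^2$.

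Next I would apply $\al_2^*$. By Section \ref{sec:cwSO*} the image of $j^*:H^*(BS')^{W'}\ra H^*(BS)^W$ contains $\sym(y_1^2,\ld,y_n^2)$; in particular $\sum y_i^2$ lies in positive degree in the image of $i^*:H^{>0}(BG_\co)\ra H^{>0}(BG)$. Theorem \ref{thm:milnor} then forces $\al_2^*\big(\sum y_i^2\big) = 0$, which collapses $p_1(\iota)$ to its second piece:
\[
\al_2^*\>p_1(\iota) = \al_2^*\big(\sum y_i\big)^2.
\]
Corollary \ref{cor:SO*} says exactly that this last expression is nonzero (and nothing about this step uses $n\ge3$, as it should since the coefficient $n-2$ was killed anyway). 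Finally, Proposition \ref{prop:step3} gives $p_1(M) = \al_3^*\al_2^*\al_1^*(q_1) = \al_3^*\al_2^*\>p_1(\iota)\neq0$.

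There is no real obstacle here: each ingredient has been set up in the earlier sections, and the entire argument is a short algebraic manipulation once $p_1(\iota)$ is expanded. The only mildly nontrivial step is the identity $\sum_{i<j}(y_i+y_j)^2 = (n-2)\sum y_i^2 + (\sum y_i)^2$, which is bookkeeping, and observing that the image of $j^*$ computed in Section \ref{sec:cwSO*} contains $\sum y_i^2$, so that the first summand of $p_1(\iota)$ vanishes under $\al_2^*$ while the second survives by Corollary \ref{cor:SO*}.
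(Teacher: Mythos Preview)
Your proof is correct and follows essentially the same route as the paper's. The only cosmetic difference is that the paper writes $p_1(\iota)=(n-1)\sum y_i^2+2\sum_{i<j}y_iy_j$ and then converts the surviving term $2\sum y_iy_j$ into $(\sum y_i)^2$ after applying $\al_2^*$, whereas you perform that regrouping up front to get $p_1(\iota)=(n-2)\sum y_i^2+(\sum y_i)^2$; the two expressions are algebraically identical and the remaining steps are the same.
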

\begin{rmk}We only consider $n\ge3$ because $\SO^*_2\simeq\co$ and $\SO^*_{4}$ is not simple.  \end{rmk}

\begin{proof}
From Equation (\ref{eqn:chernSO*}), one computes
\[p_1(\iota)=(n-1)\sum y_i^2+2\sum y_iy_j.\]
By the computation in Section \ref{sec:cwSO*}, $\al_2^*\big(\sum y_i^2\big)=0$. Using the relation $\big(\sum y_i\big)^2=\big(\sum y_i^2\big)+2\sum y_iy_j$, it follows that
\[\al_2^*\>p_1(\iota)=2\>\al_2^*\big(\sum y_iy_j\big)=\al_2^*\big(\sum y_i\big)^2,\]
which nonzero by Corollary \ref{cor:SO*}. Then $p_1(M)=\al_3^*\al_2^*\>p_1(\iota)\neq0$ by Proposition \ref{prop:step3}. 
\end{proof}

\subsection{Pontryagin classes for $\SU^*_{2n}$-manifolds}

\begin{thm}
Let $G=\emph{\SU}^*_{2n}$ and let $\Ga\sbs G$ a cocompact lattice. Then $p_i(\Ga\bs G/K)=0$ for $i>0$.
\end{thm}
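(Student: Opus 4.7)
The proof will be essentially immediate from the work already assembled in Section \ref{sec:cw}. Recall the diagram (\ref{eqn:main}): to show $p_i(M)=0$ for all $i>0$, it suffices to show that $\al_2^*\circ\al_1^*$ annihilates every $q_i\in H^{4i}\big(B\homeo(S^{n-1})\big)$, since then $p_i(M)=\al_3^*\al_2^*\al_1^*(q_i)=0$.

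The key input is Corollary \ref{cor:SU*}, which asserts that the map $\al_2^*:H^i(BG)\ra H^i(BG^\de)$ is the zero map for every $i>0$. This corollary in turn was established by observing that for $G=\SU^*_{2n}$ with $K=\SP_n$ and $U=\SU_{2n}$, the restriction map
\[j^*:H^*(BS')^{W'}\ra H^*(BS)^W\simeq\sym(y_1^2,\ld,y_n^2)\]
is surjective, so by Theorem \ref{thm:milnor} the entire positive-degree part of $H^*(BG)$ lies in the ideal generated by the image of $i^*$, i.e.\ in $\ker\al_2^*$.

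Given this, the plan is simply: since $\al_1^*(q_i)$ lies in $H^{4i}(BG)$ with $4i>0$, Corollary \ref{cor:SU*} gives $\al_2^*\al_1^*(q_i)=0$, hence $p_i(M)=\al_3^*\al_2^*\al_1^*(q_i)=0$ for every $i>0$. No obstacle arises; essentially all of the substantive content was front-loaded into Sections \ref{sec:isotropySU*} and \ref{sec:cwSU*}. (As a sanity check, one can directly verify that the total Chern class computed in (\ref{eqn:chernSU*}) is a symmetric polynomial in $y_1^2,\ld,y_n^2$, and hence lies in the image of $H^*(B\Sl_{2n}(\co))\ra H^*(B\SU^*_{2n})$, which by Theorem \ref{thm:milnor} kills it under $\al_2^*$.)
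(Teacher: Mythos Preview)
Your proof is correct and takes exactly the same approach as the paper: both invoke Corollary \ref{cor:SU*} to conclude that $\al_2^*$ vanishes in positive degrees, which immediately kills $p_i(M)=\al_3^*\al_2^*\al_1^*(q_i)$. Your write-up simply unpacks more of the reasoning behind Corollary \ref{cor:SU*} and adds a (valid) sanity check, but the paper's proof is the one-liner ``By Corollary \ref{cor:SU*}, $\al_2^*$ is zero in positive degrees.''
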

\begin{proof}
By Corollary \ref{cor:SU*}, $\al_2^*$ is zero in positive degrees. 
\end{proof}

\subsection{Pontryagin classes for $E_8$-manifolds}

\begin{thm}
Let $G=E_{8(8)}$ and let $\Ga\sbs G$ be a cocompact lattice. Then $p_1(\Ga\bs G/K)=0$ and $p_2(\Ga\bs G/K)\neq0$. 
\end{thm}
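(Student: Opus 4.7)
The computation follows the three steps boxed in (\ref{eqn:main}). Step 1 is to extract $c_2$ and $c_4$ of the complexified isotropy representation from Equation (\ref{eqn:chernE88}); Step 2 applies $\al_2^*$ using the kernel description in Section \ref{sec:cwE8}; Step 3 uses Proposition \ref{prop:step3} together with Corollary \ref{cor:E88}.

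Write $w_{\vec\ep} = \tfrac{1}{2}(\ep_1 y_1 + \cd + \ep_8 y_8)$, so that $c(\iota_\co) = \prod_{\vec\ep}(1 - w_{\vec\ep})$ is a product over the $128$ sign vectors $\vec\ep \in \{\pm 1\}^8$ with an even number of $-1$'s. The first thing I would establish is the symmetry identity
\[\sum_{\vec\ep} \ep_{i_1} \cd \ep_{i_m} = 0 \qquad \text{for distinct } i_1, \ld, i_m \text{ with } 0 < m < 8,\]
which holds because fixing these $m$ signs admits $2^{7-m}$ even-parity extensions, while $\sum_{\delta\in\{\pm 1\}^m} \delta_1 \cd \delta_m = 0$. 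Expanding $(\sum \ep_i y_i)^2$ and $(\sum \ep_i y_i)^4$ by the multinomial theorem and discarding the cross-terms killed by this identity yields
\[p_2 := \sum_{\vec\ep} w_{\vec\ep}^2 = 32 \sum_i y_i^2, \qquad p_4 := \sum_{\vec\ep} w_{\vec\ep}^4 = 8 \sum_i y_i^4 + 48 \sum_{i<j} y_i^2 y_j^2,\]
and then Newton's identities (using that $p_1 = 0$ by the $m=1$ case) give
\[c_2(\iota_\co) = -\tfrac{1}{2} p_2 = -16 \sum y_i^2, \qquad c_4(\iota_\co) = \tfrac{1}{8} p_2^2 - \tfrac{1}{4} p_4 = 126 \sum y_i^4 + 244 \sum_{i<j} y_i^2 y_j^2.\]

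For $p_1(M)$: Since $p_1(\iota) = -c_2(\iota_\co) = 16 \sum y_i^2$ and $j^*(I_2)$ is a nonzero multiple of $\sum y_i^2$ by the analysis in Section \ref{sec:cwE8}, Theorem \ref{thm:milnor} puts $p_1(\iota)$ in $\ker \al_2^*$, hence $p_1(M) = 0$. For $p_2(M)$: matching coefficients against the degree-$8$ kernel generator $(\sum y_i^2)^2 = \sum y_i^4 + 2 \sum_{i<j} y_i^2 y_j^2$, I rewrite
\[p_2(\iota) = c_4(\iota_\co) = 122 \Bigl( \sum y_i^2 \Bigr)^2 + 4 \sum y_i^4.\]
The first summand lies in $\ker \al_2^*$, and $\al_2^*\bigl(\sum y_i^4\bigr) \neq 0$ by Corollary \ref{cor:E88}, so $\al_2^* p_2(\iota) = 4\, \al_2^*\bigl(\sum y_i^4\bigr) \neq 0$; Proposition \ref{prop:step3} then delivers $p_2(M) \neq 0$. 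The main obstacle is the accurate evaluation of $p_4$, but the symmetry identity makes every multinomial term except those with two squared factors vanish automatically.
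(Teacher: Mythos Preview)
Your proof is correct and follows the same overall approach as the paper: compute $p_1(\iota)$ and $p_2(\iota)$ from Equation (\ref{eqn:chernE88}), reduce modulo the kernel of $\al_2^*$ described in Section \ref{sec:cwE8}, and conclude via Corollary \ref{cor:E88} and Proposition \ref{prop:step3}. The paper simply asserts the values $p_1(\iota)=16\sum y_i^2$ and $p_2(\iota)=126\sum y_i^4+244\sum_{i<j}y_i^2y_j^2$ and then performs the identical rewriting $p_2(\iota)=122(\sum y_i^2)^2+4\sum y_i^4$; your power-sum/Newton-identity derivation of these numbers is a welcome explicit verification of what the paper leaves as ``one computes.''
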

\begin{proof}
From Equation (\ref{eqn:chernE88}), one computes 
\[p_1(\iota)=16(y_1^2+\cd+y_8^2)\]
and 
\[p_2(\iota)=126\left(\sum_{i=1}^8y_i^4\right)+244\left(\sum_{1\le i<j\le 8}y_i^2y_j^2\right).\]
By Theorem \ref{thm:milnor} and the computation of Section \ref{sec:cwE8},  $\sum_{i=1}^8y_i^2$ and $\left(\sum_{i=1}^8y_i^2\right)^2$ generate the kernel of 
\[\al_2^*: H^k(BE_{8(8)})\ra H^k\big(B(E_{8(8)})^\de\big)\]
for $k=4,8$. Then $\al_2^*\>p_1(\iota)=0$ and 
\[\al_2^*\>p_2(\iota)= 4\>\al_2^*\left(\sum y_i^4\right)+122\>\underbrace{\al_2^*\left[\big(\sum y_i^2\big)^2\right]}_{=0}
\]
By Corollary \ref{cor:E88}, $\al_2^*\>p_2(\iota)=4\>\al_2^*\left(\sum y_i^4\right)\neq0$, and so $p_2(M)=\al_3^*\al_2^*\>p_2(\iota)\neq0$ by Proposition \ref{prop:step3}. 
\end{proof}

\begin{thm}
Let $G=E_{8(-24)}$ and let $\Ga\sbs G$ be a cocompact lattice. Then $p_1(\Ga\bs G/K)\neq0$.
\end{thm}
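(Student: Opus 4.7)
The plan is to apply the three-step procedure developed in the paper: compute $p_1(\iota) = -c_2(\iota_\co) \in H^4(BS)^W \simeq H^4(BG)$ from the weight data of the isotropy representation, check that $\al_2^*\big(p_1(\iota)\big) \neq 0$ using the description of $\ker\al_2^*$ in Corollary \ref{cor:E824}, and then invoke Proposition \ref{prop:step3} to pass from $H^4(BG^\de)$ to $H^4(\Ga\bs G/K)$.

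For Step 1, observe from Equation (\ref{eqn:chernE824}) that the weights of $\iota_\co$ come in $\pm$ pairs, so $c_1(\iota_\co) = 0$ and $p_1(\iota) = \fr{1}{2}\sum_\la \la^2$, summed over all $112$ weights. These split into the $48$ weights of the form $\pm y_i \pm \fr{y_7}{2} \pm \fr{y_8}{2}$ (for $1 \le i \le 6$) and the $64$ spinor-type weights $\fr{1}{2}(\ep_1 y_1 + \cd + \ep_6 y_6 + \ep_8 y_8)$ (with the parity constraint recorded in Section \ref{sec:isotropyE8}). When the squares are expanded and summed, every cross term $y_iy_j$, $y_iy_7$, $y_iy_8$, and $y_7y_8$ vanishes by sign symmetry (the relevant sums of products of $\ep$'s are zero, using the parity constraint in the second family), so routine bookkeeping yields
\[p_1(\iota) = 12\sum_{i=1}^6 y_i^2 + 6\> y_7^2 + 14\> y_8^2.\]

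For Step 2, Corollary \ref{cor:E824} identifies $j^*(I_2) = 30\sum_{i=1}^6 y_i^2 + 15(y_7^2+y_8^2)$ as the generator of $\ker\al_2^*$ in degree $4$, so in particular $\al_2^*\big(2\sum_{i=1}^6 y_i^2 + y_7^2 + y_8^2\big) = 0$. Using this relation, I reduce
\[\al_2^*\big(p_1(\iota)\big) \;=\; 6\,\al_2^*\big(2\sum_{i=1}^6 y_i^2 + y_7^2 + y_8^2\big) + 8\,\al_2^*(y_8^2) \;=\; 8\,\al_2^*(y_8^2),\]
which is nonzero by Corollary \ref{cor:E824}. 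Proposition \ref{prop:step3} then delivers $p_1(\Ga\bs G/K) = \al_3^*\al_2^*(p_1(\iota)) \neq 0$. There is no real obstacle in the argument: the only substantive observation is that the coefficients of $y_7^2$ and $y_8^2$ in $p_1(\iota)$ are unequal, so $p_1(\iota)$ cannot lie in the one-dimensional kernel of $\al_2^*$ in degree $4$ generated by $j^*(I_2)$.
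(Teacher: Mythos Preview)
Your proof is correct and follows exactly the paper's three-step template: compute $p_1(\iota)$ from the weights, reduce modulo $j^*(I_2)$, and apply Corollary~\ref{cor:E824} and Proposition~\ref{prop:step3}. In fact your arithmetic is cleaner than the paper's: the paper writes the degree-$4$ kernel generator as $2(y_1^2+\cd+y_6^2)+y_7^2$ (dropping the $+\,y_8^2$ term present in $j^*(I_2)$ from Section~\ref{sec:cwE8}) and consequently obtains $\al_2^*\,p_1(\iota)=14\,\al_2^*(y_8^2)$, whereas your reduction $p_1(\iota)=6\bigl(2\sum y_i^2+y_7^2+y_8^2\bigr)+8y_8^2$ gives the correct coefficient $8$---either way the class is a nonzero multiple of $\al_2^*(y_8^2)$ and the conclusion stands.
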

\begin{proof}
From Equation (\ref{eqn:chernE824}), one computes 
\[p_1(\iota)=12(y_1^2+\cd+y_6^2)+6y_7^2+14y_8^2.\]
By Theorem \ref{thm:milnor} and the computation of Section \ref{sec:cwE8}, $2(y_1^2+\cd+y_6^2)+y_7^2$ generates the kernel of 
\[\al_2^*:H^4(BE_{8(-24)})\ra H^4\big(B(E_{8(-24)})^\de\big).\]
Then 
\[\al_2^*\>p_1(\iota)=14\>\al_2^*(y_8^2)\]
is nonzero by Corollary \ref{cor:E824}, and so $p_1(M)=\al_3^*\al_2^*\>p_1(\iota)\neq0$ by Proposition \ref{prop:step3}. 
\end{proof}

\subsection{Pontryagin classes for $E_7$-manifolds}

\begin{thm}
Let $G=E_{7(7)}$ and let $\Ga\sbs G$ be a cocompact lattice. Then $p_1(\Ga\bs G/K)=0$ and $p_2(\Ga\bs G/K)\neq0$.
\end{thm}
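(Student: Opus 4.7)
The plan is to run the three-step algorithm $p_i(M) = \alpha_3^*\alpha_2^*\alpha_1^*(q_i)$ for $i = 1, 2$. By Equation (\ref{eqn:chernE77}), the weights of the complexified isotropy representation $\iota_\co$ are $w_I = y_{i_1} + y_{i_2} + y_{i_3} + y_{i_4}$ as $I = \{i_1 < i_2 < i_3 < i_4\}$ ranges over $4$-element subsets of $\{1,\ldots,8\}$, and $H^*(BS)^W \cong \sym(y_1,\ldots,y_8)/(\sum y_i)$. Write $p_k = \sum_{i=1}^8 y_i^k$ and $P_k = \sum_I w_I^k$; Newton's identities then express $c_2, c_4 \in H^*(BS)^W$ in terms of the $P_k$'s. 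By Section \ref{sec:cwE7}, $\sum y_i^2$ is (up to a nonzero scalar) the image $j^*(I_2)$, so by Theorem \ref{thm:milnor} the kernel of $\alpha_2^*$ is generated in polynomial-degree $2$ by $p_2$ and in polynomial-degree $4$ by $p_2^2$.

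For $p_1(M) = 0$: Newton gives $c_2(\iota_\co) = \tfrac{1}{2}(P_1^2 - P_2)$. Since each $y_i$ lies in $\binom{7}{3} = 35$ subsets, $P_1 = 35 \sum y_i = 0$. Expanding each $w_I^2$ and counting the subsets containing a fixed index (respectively a fixed pair) yields $P_2 = 35\,p_2 + 30 \sum_{i<j} y_i y_j$, which collapses to a scalar multiple of $p_2$ after applying $\sum y_i = 0$. Hence $p_1(\iota) = -c_2(\iota_\co)$ is a scalar multiple of $\sum y_i^2$, so $\alpha_2^* p_1(\iota) = 0$ and $p_1(M) = 0$.

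For $p_2(M) \neq 0$: The polynomial-degree-$4$ piece of $H^*(BS)^W$ is two-dimensional with basis $\{p_2^2, p_4\}$, and its intersection with $\ker(\alpha_2^*)$ is the one-dimensional span of $p_2^2$. It therefore suffices to write $p_2(\iota) = c_4(\iota_\co) = A\, p_2^2 + B\, p_4$ and to verify $B \neq 0$: then $\alpha_2^* p_2(\iota) = B \cdot \alpha_2^*(p_4) \neq 0$ by Corollary \ref{cor:E77}, and Proposition \ref{prop:step3} gives $p_2(M) \neq 0$. Using $P_1 = 0$, Newton reduces $c_4$ to $\tfrac{1}{8}P_2^2 - \tfrac{1}{4}P_4$; the remaining task is to extract the $p_4$-component of $P_4 = \sum_I w_I^4$, which can be done by expanding each $w_I^4$ via the multinomial theorem and counting, for each partition-type $(4),(3,1),(2,2),(2,1,1),(1,1,1,1)$, how many $I$'s produce a given monomial. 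A convenient shortcut is to compare two specific coefficients of $c_4 = e_4(w_I : I)$ --- for instance, the coefficient of $y_1^4$ equals $\binom{35}{4}$ (four distinct subsets all containing the index $1$, each contributing $y_1$), and the coefficient of $y_1^2 y_2^2$ is a similar subset-count --- and then solve the resulting $2 \times 2$ linear system for $(A, B)$.

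The genuinely computational step is the determination of $B$; everything else is immediate from the machinery developed in Sections \ref{sec:isotropy1}--\ref{sec:cw} together with Proposition \ref{prop:step3}. Based on the Newton reduction I expect $B$ to come out nonzero cleanly, with no coincidental cancellation between the contribution of $\tfrac{1}{8}P_2^2$ and the contribution of $-\tfrac{1}{4}P_4$.
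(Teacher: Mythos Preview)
Your overall strategy matches the paper's: compute $p_i(\iota)$ from the weights of $\iota_\co$, reduce modulo the ideal generated by $j^*(I_2)$ (which in $H^*(B\SU_8)$ is a scalar multiple of $p_2=\sum y_i^2$), and invoke Corollary~\ref{cor:E77} and Proposition~\ref{prop:step3}. The $p_1$ argument is complete: your count $P_2=35\,p_2+30\sum_{i<j}y_iy_j$ is correct, and after $p_1=\sum y_i=0$ this collapses to $20\,p_2$, so $p_1(\iota)$ is a multiple of $\sum y_i^2$ and dies under $\al_2^*$. Using Newton's identities to organize the computation is a mild repackaging of what the paper does by brute expansion, not a genuinely different route.

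There is, however, a gap in the $p_2$ step. First, you never actually compute $B$; you only say you expect it to be nonzero. Since the entire content of the theorem is this single numerical fact, the proof is incomplete until you do the arithmetic. (If you carry out your Newton/multinomial method you will find $P_4\equiv 18\,p_2^2-16\,p_4$ modulo $p_1=0$, whence $c_4=\tfrac{1}{8}P_2^2-\tfrac{1}{4}P_4=\tfrac{91}{2}p_2^2+4\,p_4$ and $B=4\neq0$.) Second, your proposed ``shortcut'' is incorrect as stated: the equation $c_4=A\,p_2^2+B\,p_4$ holds only in the quotient $\sym(y_1,\ldots,y_8)/(e_1)$, not in the full polynomial ring, so comparing the coefficients of $y_1^4$ and $y_1^2y_2^2$ in the honest polynomial $e_4(w_I)$ against those of $A\,p_2^2+B\,p_4$ gives a $2\times2$ system with the wrong answer. (Indeed your own value $\binom{35}{4}=52360$ for the $y_1^4$-coefficient of $c_4$ is far from $A+B=\tfrac{99}{2}$.) To make a coefficient-matching shortcut work you would need either to expand $c_4$ in the full five-dimensional basis $\{m_\lambda:\lambda\vdash4\}$ and then reduce, or to evaluate both sides at two points of the hyperplane $\sum y_i=0$. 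Your primary method via $P_4$ is fine; just finish it.
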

\begin{proof}
From Equation (\ref{eqn:chernE77}), one computes 
\[p_1(\iota)=595\left(\sum y_i^2\right)+1210\left(\sum y_iy_j\right),\]
and 
\[\begin{array}{ll}
p_2(\iota)=&52\>360\left(\sum y_i^4\right)+220\>660\left(\sum y_i^3y_j\right)+336\>790\left(\sum y_i^2y_j^2\right)\\[2mm]&+\>684\>810\left(\sum y_i^2y_jy_k\right)+1\>392\>444\left(\sum y_iy_jy_ky_\ell\right).\end{array}\]
By Theorem \ref{thm:milnor} and the computation of Section \ref{sec:cwE7}, for $k=4,8$, the kernel of 
\[\al_2^*:H^*(BS)^W\simeq H^k(BE_{7(7)})\ra H^k\big(B(E_{7(7)})^\de\big)\]
is generated by $\left(\sum y_i^2\right)$ and $\left(\sum y_i^2\right)^2$. Since $\sum y_i^2$ and $\sum y_iy_j$ are linearly dependent in $H^*(B\SU_8)$, it follows that $\al_2^*\>p_1(\iota)=0$. Using relations among the symmetric polynomials and that $\al_2^*\big(\sum y_i^2\big)^2=0$, we find that 
\[\al_2^*\>p_2(\iota)=-348\>109\>\al_2^*\big(\sum y_i^4\big),\]
which is nonzero by Corollary \ref{cor:E77}. Then $p_2(M)=\al_3^*\al_2^*\>p_2(\iota)\neq0$ by Proposition \ref{prop:step3}. 
\end{proof}

\begin{thm}
Let $G=E_{7(-5)}$ and let $\Ga\sbs G$ be a cocompact lattice. Then $p_1(\Ga\bs G/K)\neq0$. 
\end{thm}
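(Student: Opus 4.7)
The plan is to follow exactly the template used for $E_{8(-24)}$ and the other exceptional real forms: extract $p_1(\iota) = -c_2(\iota_\co)$ from the explicit total Chern class of the isotropy representation recorded in Equation (\ref{eqn:chernE75}), reduce it modulo the kernel of $\al_2^*$ computed in Section \ref{sec:cwE7}, and then apply Corollary \ref{cor:E75} together with Proposition \ref{prop:step3}.

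First I would extract $c_2(\iota_\co)$. Since each factor in (\ref{eqn:chernE75}) has the form $1 - \fr{1}{4} L_\ep^2$ with $L_\ep = \ep_1 y_1 + \cd + \ep_6 y_6 + y_7$, the product has no degree-$2$ contribution, and the degree-$4$ contribution is just $-\fr{1}{4}\sum_\ep L_\ep^2$, the sum being over the $32$ tuples $(\ep_1,\ld,\ep_6) \in \{\pm 1\}^6$ with an even number of $-1$'s. Expanding $L_\ep^2$ and summing, the cross terms $\sum_{i<j}\ep_i\ep_j y_i y_j$ and $2 y_7 \sum_i \ep_i y_i$ vanish termwise by a standard parity count (for fixed $i$ the sum of $\ep_i$ over even tuples is $0$, and for fixed $i<j$ the sum of $\ep_i\ep_j$ is also $0$). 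What remains gives
\[
p_1(\iota) \;=\; -c_2(\iota_\co) \;=\; 8\bigl(y_1^2+\cd+y_6^2+y_7^2\bigr).
\]

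Next I would compare this with the kernel of $\al_2^*$ in degree $4$. By Section \ref{sec:cwE7}, that kernel is generated by $j^*(I_2) = 6(y_1^2+\cd+y_6^2+2y_7^2)$, so modulo $\ker \al_2^*$,
\[
p_1(\iota) \;\equiv\; 8\bigl(y_1^2+\cd+y_6^2+y_7^2\bigr) - 8\bigl(y_1^2+\cd+y_6^2+2y_7^2\bigr) \;=\; -8\,y_7^2.
\]
Thus $\al_2^*\,p_1(\iota) = -8\,\al_2^*(y_7^2)$, which is nonzero by Corollary \ref{cor:E75}. Finally, $p_1(M) = \al_3^* \al_2^* \al_1^*(q_1) = \al_3^*\al_2^*\,p_1(\iota)$ is nonzero by Proposition \ref{prop:step3}, completing the proof.

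There is essentially no serious obstacle here; the only real work is the combinatorial evaluation of $\sum_\ep L_\ep^2$, which collapses to a clean expression thanks to the parity constraint on the sign tuples. The key structural inputs (the weight description of $\iota_\co$, the computation of $j^*(I_2)$, and the nonvanishing of $\al_2^*(y_7^2)$) are all already in place, so the proof is a direct linear-algebra match between $p_1(\iota)$ and the one-dimensional kernel of $\al_2^*$ in degree $4$.
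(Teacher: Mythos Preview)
Your proposal is correct and follows essentially the same route as the paper's own proof: compute $p_1(\iota)=8\sum_{i=1}^7 y_i^2$ from (\ref{eqn:chernE75}), subtract the generator $y_1^2+\cd+y_6^2+2y_7^2$ of $\ker\al_2^*$ in degree $4$ to get $-8y_7^2$, and conclude via Corollary \ref{cor:E75} and Proposition \ref{prop:step3}. Your extra paragraph verifying that the cross terms in $\sum_\ep L_\ep^2$ cancel is a welcome elaboration, but the structure and outcome are identical to the paper's argument.
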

\begin{proof}
From Equation (\ref{eqn:chernE75}), one computes 
\[p_1(\iota)=8\big(\sum_{i=1}^7y_i^2\big).\]
By Theorem \ref{thm:milnor} and the computation of Section \ref{sec:cwE7}, the kernel of 
\[\al_2^*:H^4(BE_{7(-5)})\ra H^4\big(B(E_{7(-5)})^\de\big)\]
consists of scalar multiples of $I=y_1^2 +\cd + y_6^2+2 y_7^2$, and so 
\[\al_2^*\>p_1(\iota)=8\>\al_2^*(I-y_7^2)=-8\>\al_2^*(y_7^2)\]
which is nonzero by Corollary \ref{cor:E75}. Then $p_1(M)=\al_3^*\al_2^*\>p_1(\iota)\neq0$ by Proposition \ref{prop:step3}. 
\end{proof}

\begin{thm}
Let $G=E_{7(-25)}$ and let $\Ga\sbs G$ be a cocompact lattice. Then $p_1(\Ga\bs G/K)\neq0$. 
\end{thm}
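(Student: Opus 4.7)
The plan is to follow the template used for the earlier real forms of $E_7$: compute $p_1(\iota) = -c_2(\iota_\co)$ from the total Chern class in Equation (\ref{eqn:chernE725}), and then verify that the result does not lie in the degree-$4$ part of $\ker\alpha_2^*$. By Theorem \ref{thm:milnor} and the calculation in Section \ref{sec:cwE7}, this kernel is the one-dimensional $\q$-span of $j^*(I_2) = 6(y_1^2+\cdots+y_5^2) + 2y_6^2 + y_7^2$. Proposition \ref{prop:step3} will then finish the argument.

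First I would expand $p_1(\iota)$ as the sum of the squares of the weights of $\iota_\co$ appearing in Equation (\ref{eqn:chernE725}). The three factors contribute the squares of $\fr{2z_6+z_7}{3}$, of $z_i \pm \fr{z_6-z_7}{3}$ for $1 \le i \le 5$, and of $\fr{1}{2}\bigl(\ep_1 z_1 + \cdots + \ep_5 z_5 + \ep_6 \fr{-z_6-2z_7}{3}\bigr)$ over the $32$ even-sign tuples. In expanding the last sum, the cross terms $\ep_k\ep_l w_k w_l$ for $k\neq l$ cancel, because among the even-sign tuples the values $\ep_k\ep_l = \pm1$ each occur $16$ times, so only the squared monomials survive. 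After converting to the $y$-variables of Section \ref{sec:cwE7} via $y_i = z_i$ for $i\le 6$ and $y_7 = -z_7$, I expect an expression of the form
\[p_1(\iota) = A\sum_{i=1}^5 y_i^2 + B\,y_6^2 + C\,y_6 y_7 + D\,y_7^2\]
with $C \neq 0$.

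Once this is in hand the conclusion is immediate: $j^*(I_2)$ has no $y_6 y_7$ term, so $p_1(\iota)$ cannot be a scalar multiple of $j^*(I_2)$, and hence $\alpha_2^*\,p_1(\iota) \neq 0$. (Equivalently, one may subtract the unique multiple of $j^*(I_2)$ that clears the $\sum_{i=1}^5 y_i^2$ contribution and invoke Corollary \ref{cor:E725}, which guarantees $\alpha_2^*(y_7^2)\neq 0$, to handle what remains.) Proposition \ref{prop:step3} then yields $p_1(M) = \alpha_3^*\alpha_2^*\alpha_1^*(q_1) \neq 0$.

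The only real obstacle is the bookkeeping in the first step: one must carefully track the $z_6^2$, $z_6 z_7$, and $z_7^2$ contributions from all three products to confirm $C \neq 0$. Once that computation is done, the argument is formally identical to the one for $E_{7(-5)}$.
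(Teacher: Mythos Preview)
Your expected shortcut fails: the cross term $C$ is zero. A direct check of the three contributions to the $z_6z_7$ coefficient in $p_1(\iota)=\sum(\text{weight})^2$ gives
\[
\frac{4}{9}\;-\;\frac{20}{9}\;+\;\frac{16}{9}\;=\;0,
\]
coming respectively from $\bigl(\tfrac{2z_6+z_7}{3}\bigr)^2$, from the ten terms $\bigl(z_i\pm\tfrac{z_6-z_7}{3}\bigr)^2$, and from the sixteen $\pm$-pairs in the spinor-type sum. More conceptually, $p_1(\iota)$ lies in $H^4(BK)\simeq H^4\bigl(B(E_6\times\SO_2)\bigr)$, which is spanned by the degree-$4$ $E_6$-invariant $6(y_1^2+\cdots+y_5^2)+2y_6^2$ and by $y_7^2$; there is no $W(E_6)$-invariant linear form in $y_1,\ldots,y_6$, so a $y_6y_7$ term is impossible from the outset. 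Hence ``$C\neq0$'' cannot be the mechanism.

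What actually happens is that the full computation gives
\[
p_1(\iota)\;=\;6(y_1^2+\cdots+y_5^2)+2y_6^2+3y_7^2,
\]
which differs from $j^*(I_2)=6(y_1^2+\cdots+y_5^2)+2y_6^2+y_7^2$ exactly by $2y_7^2$. Your parenthetical alternative---subtracting the multiple of $j^*(I_2)$ that clears $\sum y_i^2$ and invoking Corollary~\ref{cor:E725}---is therefore not an ``equivalent'' route but the only one available, and it is precisely the paper's argument. So the overall strategy is right; you just need to carry out the computation rather than guess its outcome, and drop the $C\neq0$ claim.
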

\begin{proof}
From Equation (\ref{eqn:chernE725}), one computes 
\[p_1(\iota)=6 (y_1^2 +\cd+ y_5^2) + 2 y_6^2 + 3 y_7^2.\]
From Section \ref{sec:cwE7}, the kernel of
\[\al_2^*:H^4\big(BE_{7(-25)}\big)\ra H^4\big(B(E_{7(-25)})^\de\big)\]
is generated by 
\[6 (y_1^2 +  y_2^2 + y_3^2 + y_4^2 + y_5^2) + 2 y_6^2 + y_7^2.\]
Then 
\[\al_2^*\>p_1(\iota)=2\>\al_2^*(y_7^2).\]
By Corollary \ref{cor:E725}, $\al_2^*(y_7^2)\neq0$, and so $p_1(M)=\al_3^*\al_2^*\>p_1(\iota)\neq0$ by Proposition \ref{prop:step3}. 
\end{proof}

\subsection{Pontryagin classes for $E_6$-manifolds}

\begin{thm}
Let $G=E_{6(6)}$ and let $\Ga\sbs G$ be a cocompact lattice. Then $p_1(\Ga\bs G/K)=0$ and $p_2(\Ga\bs G/K)\neq0$. 
\end{thm}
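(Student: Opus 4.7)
The plan is to mimic the template used throughout Section \ref{sec:computation}: compute $p_1(\iota)$ and $p_2(\iota)$ from the total Chern class formula (\ref{eqn:chernE66}), then show that modulo the kernel of $\al_2^*$ (identified in Section \ref{sec:cwE6}) the first vanishes and the second is a nonzero multiple of $\sum y_i^4$; finally invoke Proposition \ref{prop:step3} to lift the conclusion from $H^*(BG^\de)$ to $H^*(M)$. Note that every weight of the complexified isotropy representation appears with its negative, so $c(\iota_\co)=\prod_\al(1-w_\al^2)$; thus $p_1(\iota)=-c_2(\iota_\co)=\sum_\al w_\al^2$ and $p_2(\iota)=c_4(\iota_\co)=\tfrac{1}{2}\bigl(p_1(\iota)^2-\sum_\al w_\al^4\bigr)$.

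First I would compute $p_1(\iota)$. The weights are $\pm y_i\pm y_j$ for $1\le i<j\le 4$, together with $\ep_1y_1+\ep_2y_2+\ep_3y_3+y_4$ for $(\ep_1,\ep_2,\ep_3)\in\{\pm1\}^3$. Summing squares, the cross-terms $y_iy_j$ all cancel by the usual sign-averaging (for the first batch because $(y_i+y_j)^2+(y_i-y_j)^2$ has no $y_iy_j$ term, for the second because averaging $\ep_i\ep_j$ over $\{\pm1\}^3$ gives $0$). Hence $p_1(\iota)$ is a positive scalar multiple of $\sum y_i^2$. By the computation in Section \ref{sec:cwE6}, $j^*(I_2)=12(y_1^2+\cd+y_4^2)$, so by Theorem \ref{thm:milnor} the kernel of $\al_2^*$ in degree $4$ is spanned by $\sum y_i^2$. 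Thus $\al_2^*\>p_1(\iota)=0$ and $p_1(M)=0$.

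For $p_2(\iota)$, the same formula gives $p_2(\iota)=\tfrac{1}{2}(p_1(\iota)^2-\sum_\al w_\al^4)$. Since $p_1(\iota)$ is a multiple of $\sum y_i^2$ and the degree-$8$ kernel of $\al_2^*$ contains $\bigl(\sum y_i^2\bigr)^2$, we have $\al_2^*\bigl(p_1(\iota)^2\bigr)=0$, and so $\al_2^*\>p_2(\iota)=-\tfrac{1}{2}\al_2^*\bigl(\sum_\al w_\al^4\bigr)$. A routine multinomial expansion of $\sum_{i<j}[(y_i+y_j)^4+(y_i-y_j)^4]$ and $\sum_{\ep\in\{\pm1\}^3}(\ep_1y_1+\ep_2y_2+\ep_3y_3+y_4)^4$ — using again that $\ep$-sums kill every odd-exponent monomial — yields an expression of the form $A\sum y_i^4+B\sum_{i<j}y_i^2y_j^2$ with concrete positive integers $A,B$. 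Using $\sum_{i<j}y_i^2y_j^2=\tfrac{1}{2}\bigl[(\sum y_i^2)^2-\sum y_i^4\bigr]$ and reducing modulo the ideal generated by $\sum y_i^2$, this becomes a nonzero rational multiple of $\sum y_i^4$.

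By Corollary \ref{cor:E66}, $\al_2^*\bigl(\sum y_i^4\bigr)\neq0$, so $\al_2^*\>p_2(\iota)\neq0$, and Proposition \ref{prop:step3} then gives $p_2(M)=\al_3^*\al_2^*\>p_2(\iota)\neq0$. The only delicate point is the second multinomial expansion; the main obstacle is purely bookkeeping — checking that after the $\ep$-averaging the coefficient of $\sum y_i^4$ in $\sum_\al w_\al^4$ does not accidentally vanish (which would force us to look at higher-degree Pontryagin classes). Since each of the two sets of factors contributes a positive coefficient to $\sum y_i^4$, this cancellation cannot occur, and the argument goes through.
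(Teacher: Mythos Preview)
Your approach is the same as the paper's, and the strategy is sound. There is, however, a genuine gap in your final non-vanishing check.

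After writing $\sum_\al w_\al^4 = A\sum y_i^4 + B\sum_{i<j}y_i^2y_j^2$ and reducing modulo the ideal generated by $\sum y_i^2$, you obtain $(A-B/2)\sum y_i^4$. What you must verify is that $A-B/2\neq 0$. Instead, your last paragraph argues only that $A>0$ (``each of the two sets of factors contributes a positive coefficient to $\sum y_i^4$''). That is the wrong quantity: positivity of $A$ does not preclude $A=B/2$. You have to actually compute both coefficients. From the twelve weights $y_i\pm y_j$ one gets $6\sum y_i^4 + 12\sum_{i<j}y_i^2y_j^2$, and from the eight weights $\ep_1y_1+\ep_2y_2+\ep_3y_3+y_4$ one gets $8\sum y_i^4 + 48\sum_{i<j}y_i^2y_j^2$, so $A=14$, $B=60$, and $A-B/2=-16\neq 0$. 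Hence $\al_2^*\,p_2(\iota)=-\tfrac12\cdot(-16)\,\al_2^*\!\big(\sum y_i^4\big)=8\,\al_2^*\!\big(\sum y_i^4\big)$, matching the paper's direct computation of $p_2(\iota)=91\sum y_i^4+166\sum_{i<j}y_i^2y_j^2$ followed by reduction.

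Once this is corrected, the rest of your argument (the use of Theorem~\ref{thm:milnor}, Corollary~\ref{cor:E66}, and Proposition~\ref{prop:step3}) goes through exactly as in the paper.
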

\begin{proof}
From Equation (\ref{eqn:chernE66}), one computes 
\[p_1(\iota)=14(y_1^2+y_2^2+y_3^2+y_4^2)\]
and 
\[p_2(\iota)=91\left(\sum y_i^4\right)+166\left(\sum y_i^2y_j^2\right).\]
From Section \ref{sec:cwE6}, $I=y_1^2+y_2^2+y_3^2+y_4^2$ and $I^2$ generate the kernel of 
\[\al_2^*:H^k\big(BE_{6(6)}\big)\ra H^k\big(B(E_{6(-14)})^\de\big)\]
for $k=4,8$. Then $\al_2^*\>p_1(\iota)=0$ and 
\[\al_2^*\>p_2(\iota)=8\>\al_2^*\left(\sum y_i^4\right)+83\>\underbrace{\al_2^*\left(\sum y_i^4+2\sum y_i^2y_j^2\right)}_{=0}.
\]
By Corollary \ref{cor:E66}, $\al_2^*\left(\sum y_i^4\right)\neq0$, and so $p_2(M)=\al_3^*\al_2^*\>p_2(\iota)\neq0$ by Proposition \ref{prop:step3}.
\end{proof}

\begin{thm}
Let $G=E_{6(2)}$ and let $\Ga\sbs G$ be a cocompact lattice. Then $p_1(\Ga\bs G/K)\neq0$. 
\end{thm}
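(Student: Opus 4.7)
The plan is to follow the same template as the parallel results for $E_{6(6)}$, $E_{6(-14)}$, and $E_{7(-5)}$: extract the degree-$2$ component of the explicit Chern polynomial from Section \ref{sec:isotropyE6}, then reduce modulo the generator of $\ker\al_2^*$ computed in Section \ref{sec:cwE6}, and finally invoke Corollary \ref{cor:E62} together with Proposition \ref{prop:step3}.

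First I would extract $p_1(\iota)$ from Equation (\ref{eqn:chernE62}). Since the weights of the isotropy representation come in opposite pairs, $c(\iota_\co) = \prod_\la (1-\la^2)$, so
\[ p_1(\iota) \;=\; \sum_\la \la^2 \;=\; \sum_{1\le i_1<i_2<i_3\le 6}(y_{i_1}+y_{i_2}+y_{i_3}+y_7)^2. \]
Expanding the square gives contributions of the form $\sum y_i^2$, $y_7^2$, $\sum_{i<j}y_iy_j$, and $y_7\cdot\sum y_i$. Here I would use the relation $y_1+\cd+y_6=0$ that defines $H^*(B\SU_6)\sbs H^*(BS)^W$: it kills the mixed linear term $y_7\cdot\sum y_i$ outright, and the identity $\big(\sum y_i\big)^2=0$ expresses $\sum_{i<j}y_iy_j = -\tfrac{1}{2}\sum y_i^2$. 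After elementary counting (each $y_i^2$ appears in $\binom{5}{2}=10$ triples, each $y_iy_j$ in $\binom{4}{1}=4$ triples, and $y_7^2$ in all $20$ triples), I expect to arrive at
\[ p_1(\iota)\;=\; 6\big(y_1^2+\cd+y_6^2\big)\;+\;20\,y_7^2. \]

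Next I would apply the computation of $\al_2^*$ from Section \ref{sec:cwE6}, which shows that the kernel of $\al_2^*: H^4(BE_{6(2)})\ra H^4\big(B(E_{6(2)})^\de\big)$ is generated by $j^*(I_2)=6\big(\sum_{i=1}^6 y_i^2 + 2y_7^2\big)$. Hence $\al_2^*\big(\sum y_i^2\big) = -2\,\al_2^*(y_7^2)$, and substituting yields
\[ \al_2^*\, p_1(\iota)\;=\; 6\cdot(-2)\,\al_2^*(y_7^2)\;+\;20\,\al_2^*(y_7^2)\;=\; 8\,\al_2^*(y_7^2), \]
which is nonzero by Corollary \ref{cor:E62}. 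Then $p_1(M)=\al_3^*\al_2^*\al_1^*(q_1)=\al_3^*\al_2^*\,p_1(\iota)\neq 0$ by Proposition \ref{prop:step3}, completing the argument.

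The only real obstacle is bookkeeping: I need to make sure the relation $y_1+\cd+y_6=0$ is applied consistently and that the final scalar multiple of $\al_2^*(y_7^2)$ does not accidentally vanish. If the coefficient $8$ above had come out to $0$ (which would correspond to a miraculous cancellation between the $\sum y_i^2$ and $y_7^2$ contributions relative to the generator of $\ker\al_2^*$), one would have to descend to $p_2$ as in the $E_{6(6)}$ case. Since Corollary \ref{cor:E62} isolates exactly the $y_7^2$-direction transverse to the kernel, and since the weights of $\iota$ involve $y_7$ nontrivially, I fully expect a clean nonzero answer at the level of $p_1$.
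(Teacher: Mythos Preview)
Your proposal is correct and follows essentially the same route as the paper: expand Equation~(\ref{eqn:chernE62}) to get $p_1(\iota)=6\big(\sum_{i=1}^6 y_i^2\big)+20\,y_7^2$, reduce modulo the generator $\sum y_i^2+2y_7^2$ of $\ker\al_2^*$ from Section~\ref{sec:cwE6} to obtain $\al_2^*\,p_1(\iota)=8\,\al_2^*(y_7^2)$, and conclude via Corollary~\ref{cor:E62} and Proposition~\ref{prop:step3}. Your intermediate bookkeeping (the $\binom{5}{2}$, $\binom{4}{1}$, and $20$ counts, together with the relation $y_1+\cdots+y_6=0$) reproduces exactly the paper's computation $10\sum y_i^2+20y_7^2+8\sum y_iy_j=6\sum y_i^2+20y_7^2$.
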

\begin{proof}
From Equation (\ref{eqn:chernE62}), one computes 
\[p_1(\iota)=10\big(\sum_{i=1}^6y_i^2\big)+20\>y_7^2+8\big(\sum_{1\le i<j\le 6} y_iy_j\big)=6\big(\sum_{i=1}^6y_i^2\big)+20\>y_7^2.\]
In Section \ref{sec:cwE6}, we computed that
\[I=\sum_{i=1}^6 y_i^2+2y_7^2\]
generates (as a vector space) the kernel of 
\[\al_2^*:H^4\big(BE_{6(2)}\big)\ra H^4\big(B(E_{6(2)})^\de\big).\]
Then 
\[\al_2^*\>p_1(\iota)=\al_2^*(6I+8\>y_7^2)=8\>\al_2^*(y_7^2).\]
By Corollary \ref{cor:E62}, $\al_2^*(y_7^2)\neq0$. Then $p_1(M)=\al_3^*\al_2^*\>p_1(M)\neq0$ by Proposition \ref{prop:step3}. 
\end{proof}

\begin{thm}
Let $G=E_{6(-14)}$ and let $\Ga\sbs G$ be a cocompact lattice. Then $p_1(\Ga\bs G/K)\neq0$. 
\end{thm}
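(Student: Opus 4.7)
The plan is to follow the three-step template used for the other real forms of $E_6$ in this section: extract $p_1(\iota)$ from the Chern class formula of Equation~(\ref{eqn:chernE614}), reduce modulo $\ker \al_2^*$ as computed in Section~\ref{sec:cwE6}, and then invoke Corollary~\ref{cor:E614} together with Proposition~\ref{prop:step3}.

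First I would extract the degree-$4$ component of
\[c(\iota_\co)=\prod_\epsilon\left(1-\tfrac{1}{4}(\ep_1y_1+\cd+\ep_5y_5+6y_6)^2\right),\]
obtaining $p_1(\iota)=-c_2(\iota_\co)=\tfrac{1}{4}\sum_\epsilon(\ep_1y_1+\cd+\ep_5y_5+6y_6)^2$, where the sum ranges over the $16$ tuples $(\ep_1,\ld,\ep_5)\in\{\pm1\}^5$ with an even number of $-1$'s. Expanding the square, the cross terms $y_6\sum_\epsilon\ep_i$ and $\sum_\epsilon\ep_i\ep_j$ vanish by the standard parity argument: the restriction of a non-constant $\mathbb{F}_2$-linear character to the even-weight subgroup of $\mathbb{F}_2^5$ is balanced. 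Only the diagonal contributions survive, yielding
\[p_1(\iota)=4\,(y_1^2+y_2^2+y_3^2+y_4^2+y_5^2)+144\,y_6^2.\]

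Next, by Section~\ref{sec:cwE6}, the kernel of $\al_2^*:H^4(BE_{6(-14)})\ra H^4\big(B(E_{6(-14)})^\de\big)$ is the one-dimensional subspace spanned by $j^*(I_2)=6(y_1^2+\cd+y_5^2)+3y_6^2$. A direct combination gives
\[p_1(\iota)-\tfrac{2}{3}\,j^*(I_2)=142\,y_6^2,\]
so $\al_2^*\,p_1(\iota)=142\,\al_2^*(y_6^2)$, which is nonzero by Corollary~\ref{cor:E614}. Proposition~\ref{prop:step3} then yields $p_1(M)=\al_3^*\al_2^*\al_1^*(q_1)=\al_3^*\al_2^*\,p_1(\iota)\neq0$.

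There is no serious obstacle here: every ingredient (the explicit Chern class, the kernel of $\al_2^*$, the nontriviality of $\al_2^*(y_6^2)$, and the transfer injectivity of $\al_3^*$) is already established. The only care needed is the parity bookkeeping when collapsing $\sum_\epsilon(\ep_1y_1+\cd+\ep_5y_5+6y_6)^2$; this is routine, and the specific numerical coefficient $142$ is not important — any verification that the $y_6^2$-coefficient of $p_1(\iota)$ is not proportional (with ratio $\tfrac{144}{4}=36$ equal to $\tfrac{3}{6}=\tfrac12$) to the $y_6^2$-coefficient of $j^*(I_2)$ suffices, and $36\neq\tfrac{1}{2}$ is visible by inspection.
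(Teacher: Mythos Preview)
Your proof is correct and follows the same template as the paper's own argument: compute $p_1(\iota)=4(y_1^2+\cdots+y_5^2)+144\,y_6^2$ from Equation~(\ref{eqn:chernE614}), note that the degree-$4$ kernel of $\al_2^*$ is one-dimensional and spanned by $j^*(I_2)$, and observe that $p_1(\iota)$ is not a scalar multiple of this generator. The paper simply records the non-proportionality and stops, whereas you go one step further and explicitly compute the residue $\al_2^*\,p_1(\iota)=142\,\al_2^*(y_6^2)$; both arguments then finish with Corollary~\ref{cor:E614} and Proposition~\ref{prop:step3}.
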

\begin{proof}
From Equation (\ref{eqn:chernE614}), one computes 
\[p_1(\iota)=4 (y_1^2 + y_2^2 + y_3^2 + y_4^2 + y_5^2) + 144\> y_6^2.\]
By Theorem \ref{thm:milnor} and the computation of Section \ref{sec:cwE6},
\[I=3(y_1^2+\cd+y_5^2)+y_6^2\]
generates the kernel of 
\[\al_2^*:H^4\big(BE_{6(-14)}\big)\ra H^4\big(B(E_{6(-14)})^\de\big).\]
Since $p_1(\iota)\neq c\cdot I$ for any scalar $c$, we conclude that $p_1(\iota)$ is not in $\ker\al_2^*$. Hence $p_1(M)=\al_3^*\al_2^*\>p_1(\iota)\neq0$ by Proposition \ref{prop:step3}. 
\end{proof}

\begin{thm}
Let $G=E_{6(-26)}$ and let $\Ga\sbs G$ be a cocompact lattice. Then the $p_i(\Ga\bs G/K)=0$ for $i>0$.
\end{thm}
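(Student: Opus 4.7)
The plan is to invoke the factorization $\mu^* = \al_3^* \circ \al_2^* \circ \al_1^*$ from Diagram (\ref{eqn:main}) together with the fact that $p_i(\Ga\bs G/K) = \al_3^*\al_2^*\al_1^*(q_i)$, and then observe that $\al_2^*$ already kills everything in positive degrees for this particular $G$. Specifically, Corollary \ref{cor:E626} asserts that for $G = E_{6(-26)}$ the map $\al_2^*:H^i(BG) \ra H^i(BG^\de)$ is the zero map for all $i > 0$.

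Concretely, I would argue as follows. By the discussion in Section \ref{sec:isotropy1}, the class $\al_1^*(q_i) = p_i(\iota) = (-1)^i c_{2i}(\iota_\co)$ lies in $H^{4i}(BG)$, which is a strictly positive degree for $i \ge 1$. Applying $\al_2^*$ to this element therefore yields zero by Corollary \ref{cor:E626}. Consequently
\[
p_i(\Ga\bs G/K) \;=\; \al_3^*\al_2^*\al_1^*(q_i) \;=\; \al_3^*\bigl(\al_2^*(p_i(\iota))\bigr) \;=\; \al_3^*(0) \;=\; 0
\]
for every $i > 0$, which is the conclusion.

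There is no real obstacle here: the work has already been done in Section \ref{sec:cwE6}, where the verification that $j^*:H^*(BS')^{W'} \ra H^*(BS)^W$ is surjective (using the explicit generators from Equations (\ref{eqn:invarF4}) and (\ref{eqn:invarE6})) combined with Theorem \ref{thm:milnor} forced $\al_2^*$ to vanish in positive degrees. So the proof is essentially a two-line application of Corollary \ref{cor:E626}, with no further computation of weights or symmetric polynomial manipulations required. This is analogous to the argument for $\SU^*_{2n}$ via Corollary \ref{cor:SU*} and for the groups in Table 1 via Theorem \ref{thm:milnor2}.
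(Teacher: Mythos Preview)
Your proof is correct and follows exactly the same approach as the paper: both simply invoke Corollary \ref{cor:E626} to conclude that $\al_2^*$ vanishes in positive degrees, which immediately forces $p_i(\Ga\bs G/K)=0$. Your write-up is just a more detailed unpacking of the one-line argument the paper gives.
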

\begin{proof}
By Corollary \ref{cor:E626}, $\al_2^*$ is zero in positive degrees. 
\end{proof}

\subsection{Pontryagin classes for $F_4$-manifolds}

\begin{thm}
Let $G=F_{4(4)}$ and let $\Ga\sbs G$ be a cocompact lattice. Then $p_1(\Ga\bs G/K)\neq0$.
\end{thm}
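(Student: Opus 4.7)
The plan is to follow the same three-step template used throughout Section \ref{sec:computation}: compute $p_1(\iota)$ from the explicit formula for $c(\iota_\co)$ given in Equation (\ref{eqn:chernF44}), reduce the result modulo the kernel of $\al_2^*$ identified in Section \ref{sec:cwF4}, and then invoke Proposition \ref{prop:step3} to promote nonvanishing in $H^*(BG^\de)$ to nonvanishing of $p_1(M)$.

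First I will extract $p_1(\iota) = -c_2(\iota_\co)$ from the product formula (\ref{eqn:chernF44}). Since each factor has the form $(1-\la^2)$ for $\la$ a linear form in the $y_i$, only the sum of squares of these linear forms contributes to $c_2$, so
\[
p_1(\iota) = \sum_{i=2}^{4}\!\big[(y_1+y_i)^2+(y_1-y_i)^2\big] + \sum_{(\ep_2,\ep_3,\ep_4)\in\{\pm1\}^3}\!(y_1+\ep_2 y_2+\ep_3 y_3+\ep_4 y_4)^2.
\]
The first sum collapses to $6y_1^2+2(y_2^2+y_3^2+y_4^2)$, and in the second sum every cross term $y_iy_j$ cancels because each sign $\ep_i$ ranges over $\{\pm1\}$ independently, leaving $8(y_1^2+y_2^2+y_3^2+y_4^2)$. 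Summing,
\[
p_1(\iota) = 14 y_1^2 + 10(y_2^2+y_3^2+y_4^2).
\]

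Next I will apply the computation in Section \ref{sec:cwF4}: by Theorem \ref{thm:milnor}, the kernel of $\al_2^*$ in $H^4$ is generated by scalar multiples of $y_1^2+y_2^2+y_3^2+y_4^2$. Writing
\[
p_1(\iota) = 10(y_1^2+y_2^2+y_3^2+y_4^2) + 4 y_1^2,
\]
we obtain $\al_2^*\,p_1(\iota) = 4\,\al_2^*(y_1^2)$. Corollary \ref{cor:F44} tells us precisely that $\al_2^*(y_1^2)\neq 0$, hence $\al_2^*\,p_1(\iota)\neq 0$. Proposition \ref{prop:step3} then gives $p_1(M) = \al_3^*\al_2^*\,p_1(\iota)\neq 0$.

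I do not anticipate a real obstacle here: the hard work has already been done in Sections \ref{sec:isotropyF4} and \ref{sec:cwF4} (identifying the weights of $\iota_\co$ and pinning down the one-dimensional kernel of $\al_2^*$ in $H^4$). The only thing to be careful about is the arithmetic in collecting the $y_1^2$ coefficient after reducing modulo $y_1^2+\cd+y_4^2$, since if by coincidence that coefficient turned out to be zero the argument would collapse; the calculation above confirms it does not.
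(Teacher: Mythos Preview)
Your proof is correct and follows essentially the same approach as the paper: you compute $p_1(\iota)=14y_1^2+10(y_2^2+y_3^2+y_4^2)$ from (\ref{eqn:chernF44}), reduce modulo the one-dimensional kernel generated by $y_1^2+y_2^2+y_3^2+y_4^2$ to get $\al_2^*\,p_1(\iota)=4\,\al_2^*(y_1^2)$, and conclude via Corollary \ref{cor:F44} and Proposition \ref{prop:step3}. The only extra detail you provide is the explicit expansion of the two sums contributing to $p_1(\iota)$, which the paper omits.
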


\begin{proof}
From Equation (\ref{eqn:chernF44}), one computes 
\[p_1(\iota)=14y_1^2+10y_2^2+10y_3^2+10y_4^2.\]
By Theorem \ref{thm:milnor} and the computation in Section \ref{sec:cwF4}, $\sum_{i=1}^4y_i^2$ generates the kernel of 
\[\al_2^*:H^4(BF_{4(-20)})\ra H^4(BF_{4(-20)}^{\de}).\]
Then 
\[\al_2^*\>p_1(\iota)=4\>\al_2^*(y_1^2)+10\>\underbrace{\al_2^*(y_1^2+y_2^2+y_3^2+y_4^2)}_{=0}.\]By Corollary \ref{cor:F44}, $\al_2^*\>p_1(\iota)=4\al_2^*(y_1^2)\neq0$, so $p_1(M)=\al_3^*\al_2^*\>p_1(\iota)\neq0$ by Proposition \ref{prop:step3}. 
\end{proof}

\begin{thm}
Let $G=F_{4(-20)}$ and let $\Ga\sbs G$ be a cocompact lattice. Then $p_1(\Ga\bs G/K)=0$ and $p_2(\Ga\bs G/K)\neq0$. 
\end{thm}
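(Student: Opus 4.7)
The plan is to compute $p_1(\iota)$ and $p_2(\iota)$ directly from Equation (\ref{eqn:chernF420}), pass them through $\al_2^*$ using the description of the kernel in Section \ref{sec:cwF4} (via Theorem \ref{thm:milnor}), and invoke Proposition \ref{prop:step3}.

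Write $Q_\ep = \fr{1}{2}(\ep_1 y_1 + \ep_2 y_2 + \ep_3 y_3 + y_4)$ for $\ep = (\ep_1,\ep_2,\ep_3) \in \{\pm 1\}^3$, so that Equation (\ref{eqn:chernF420}) reads $c(\iota_\co) = \prod_\ep (1 - Q_\ep^2)$. Setting $P_k := \sum_\ep Q_\ep^{2k}$, I have $p_1(\iota) = P_1$ and, by Newton's identity applied to the $8$ variables $\{Q_\ep^2\}_\ep$, $p_2(\iota) = \fr{1}{2}(P_1^2 - P_2)$. Each $P_k$ I would compute by a multinomial expansion in which summation over the $8$ sign choices kills every term containing an odd power of any of $y_1$, $y_2$, $y_3$. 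This yields $P_1 = 2\sum_{i=1}^4 y_i^2$ and $P_2 = \fr{1}{2}\sum y_i^4 + 3\sum_{1\le i<j\le 4} y_i^2 y_j^2$, hence $p_2(\iota) = \fr{7}{4}\sum y_i^4 + \fr{5}{2}\sum_{i<j} y_i^2 y_j^2$.

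Section \ref{sec:cwF4} shows that $j^*(I_2)$ is a scalar multiple of $\sum y_i^2$ in degree $4$ and that $j^*(I_2)^2$ is a scalar multiple of $(\sum y_i^2)^2 = \sum y_i^4 + 2\sum_{i<j} y_i^2 y_j^2$ in degree $8$, so by Theorem \ref{thm:milnor} these elements generate the kernels of $\al_2^*$ in their respective degrees. Since $p_1(\iota) = 2\sum y_i^2$ lies in the degree $4$ kernel, $p_1(M) = \al_3^*\al_2^* p_1(\iota) = 0$. For $p_2(\iota)$, the coefficient ratio $(7/4):(5/2)$ is not $1:2$, so $p_2(\iota)$ is \emph{not} a scalar multiple of $\sum y_i^4 + 2\sum_{i<j} y_i^2 y_j^2$. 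Explicitly, $p_2(\iota) - \fr{7}{4}(\sum y_i^2)^2 = -\sum_{i<j} y_i^2 y_j^2$, so $\al_2^* p_2(\iota) = -\al_2^*\big(\sum_{i<j} y_i^2 y_j^2\big)$, which is nonzero by Corollary \ref{cor:F420}. Proposition \ref{prop:step3} then gives $p_2(M) = \al_3^*\al_2^* p_2(\iota) \neq 0$.

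The main obstacle is the multinomial computation of $P_2$: since only the \emph{ratio} of the two coefficients in $p_2(\iota)$ governs whether $p_2(\iota)$ falls into the degree $8$ kernel, the computation must be done precisely enough to confirm that this ratio is not $1:2$. The averaging-over-signs trick makes this manageable, but it is the one step requiring actual calculation rather than citation of earlier results.
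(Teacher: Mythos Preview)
Your proof is correct and follows essentially the same approach as the paper's: both compute $p_1(\iota)=2\sum y_i^2$ and $p_2(\iota)=\fr{7}{4}\sum y_i^4+\fr{5}{2}\sum_{i<j}y_i^2y_j^2$ from Equation~(\ref{eqn:chernF420}), use the kernel description from Section~\ref{sec:cwF4} to reduce $\al_2^*p_2(\iota)$ to $-\al_2^*\big(\sum_{i<j}y_i^2y_j^2\big)$, and then invoke Corollary~\ref{cor:F420} and Proposition~\ref{prop:step3}. The only difference is that you spell out the Newton-identity and sign-averaging steps that the paper leaves implicit.
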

\begin{proof}
From Equation (\ref{eqn:chernF420}), one computes 
\[p_1(\iota)=2(y_1^2+\cd+y_4^2)\>\>\>\>\text{ and }\>\>\>\>p_2(\iota)=\fr{7}{4}\left(\sum y_i^4\right)+\fr{5}{2}\left(\sum_{1\le i<j\le4} y_i^2y_j^2\right).\]
By Theorem \ref{thm:milnor} and the computation in Section \ref{sec:cwF4}, $\sum_{i=1}^4 y_i^2$ and $\left(\sum_{i=1}^4y_i^2\right)^2=\sum y_i^4+2\sum y_i^2y_j^2$ generate the kernel of 
\[\al_2^*:H^k(BF_{4(-20)})\ra H^k(BF_{4(-20)}^{\de})\]
for $k=4,8$, respectively. Then $\al_2^*\>p_1(\iota)=0$ and 
\[\al_2^*\>p_2(\iota)=\fr{7}{4}\underbrace{\al_2^*\left(\sum y_i^4+2\sum y_i^2y_j^2\right)}_{=0}-\fr{2}{2}\al_2^*\left(\sum y_i^2y_j^2\right)=-\al_2^*\left(\sum y_i^2y_j^2\right).\]
Then $\al_2^*\>p_2(\iota)\neq0$ by Corollary \ref{cor:F420}, and so $p_2(M)=\al_3^*\al_2^*\>p_2(\iota)\neq0$ by Proposition \ref{prop:step3}. 
\end{proof}

\subsection{Pontryagin classes for $G_2$-manifolds}

\begin{thm}
Let $G=G_{2(2)}$, and let $\Ga\sbs G$ be a cocompact lattice. Then $p_1(\Ga\bs G/K)\neq0$. 
\end{thm}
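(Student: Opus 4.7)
The plan is to follow the same three-step template used for every other real form in Section \ref{sec:computation}, specialized to $G_{2(2)}$: compute $p_1(\iota)$ from the explicit total Chern class recorded in Equation (\ref{eqn:chernG22}), check that $\al_2^*\>p_1(\iota)$ is nonzero by reducing modulo the ideal generated by $j^*(I_2)$, and then apply Proposition \ref{prop:step3}. There are no new ideas to introduce; the calculation is essentially a one-liner once the two inputs from Sections \ref{sec:isotropyG2} and \ref{sec:cwG2} are assembled.

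First I would extract $p_1(\iota) = -c_2(\iota_\co)$ from (\ref{eqn:chernG22}). Since the weights of the isotropy representation occur in pairs $\pm w$, the total Chern class factors as $\prod(1-w^2)$ and consequently $c_2(\iota_\co) = -\sum w^2$, so $p_1(\iota) = \sum w^2$ where $w$ ranges over the four positive weights $y_2+3y_1,\ y_2+y_1,\ y_2-y_1,\ y_2-3y_1$. A direct squaring and summing (the cross terms in $y_1y_2$ cancel in pairs) yields $p_1(\iota) = 4(5y_1^2+y_2^2)$ in $H^4(BS)^W \simeq H^4(BG)$.

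Next I would recall from Section \ref{sec:cwG2} that $j^*(I_2) = 2(3y_1^2 + y_2^2)$, so by Theorem \ref{thm:milnor} the kernel of
\[\al_2^*\colon H^4(BG_{2(2)}) \longrightarrow H^4\bigl(B(G_{2(2)})^\de\bigr)\]
is spanned by $3y_1^2 + y_2^2$. Writing $5y_1^2 + y_2^2 = (3y_1^2 + y_2^2) + 2y_1^2$ gives
\[\al_2^*\>p_1(\iota) \;=\; 4\>\al_2^*(3y_1^2 + y_2^2) + 8\>\al_2^*(y_1^2) \;=\; 8\>\al_2^*(y_1^2),\]
which is nonzero by Corollary \ref{cor:G2}. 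Finally, $p_1(M) = \al_3^*\al_2^*\>p_1(\iota) \neq 0$ by Proposition \ref{prop:step3}.

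There is no real obstacle here: the only possible pitfall would have been $p_1(\iota)$ lying in the kernel of $\al_2^*$, i.e., being proportional to $3y_1^2 + y_2^2$. The small arithmetic above shows the coefficient ratio is $5{:}1$ rather than $3{:}1$, so $p_1(\iota)$ escapes the kernel and the result follows immediately.
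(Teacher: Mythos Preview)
Your proof is correct and follows exactly the same approach as the paper: compute $p_1(\iota)=4(5y_1^2+y_2^2)$ from (\ref{eqn:chernG22}), reduce modulo the kernel generator $3y_1^2+y_2^2$ to obtain $\al_2^*\>p_1(\iota)=8\>\al_2^*(y_1^2)$, and finish with Corollary \ref{cor:G2} and Proposition \ref{prop:step3}. Your write-up is in fact slightly more explicit than the paper's about the extraction of $p_1(\iota)$ from the total Chern class.
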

\begin{proof}
From Equation (\ref{eqn:chernG22}), one computes 
\[\al_2^*: H^4(BG_{2(2)})\ra H^4(BG_{2(2)}^\de)\]
Then 
\[\al_2^*\>p_1(\iota)= \al_2^*\big(8y_1^2+12y_1^2+4y_2^2\big)=8\>\al_2^*(y_1^2)+4\>\underbrace{\al_2^*(3y_1^2+y_2^2)}_{=0}=8\>\al_2^*(y_1^2).\]
By Corollary \ref{cor:G2}, $\al_2^*(y_1^2)\neq0$, and so $p_1(M)=\al_3^*\al_2^*\>p_1(\iota)\neq0$ by Proposition \ref{prop:step3}. 
\end{proof}

\bibliographystyle{plain}
\bibliography{/Users/sm/Documents/GeneralMath/LaTeX/tpbib}

\begin{thebibliography}{10}

\bibitem{adams}
J.~Adams.
\newblock {\em Lectures on exceptional Lie groups}.
\newblock Chicago Lectures in Mathematics. University of Chicago Press,
  Chicago, IL, 1996.

\bibitem{bgs}
W.~Ballmann, M.~Gromov, and V.~Schroeder.
\newblock {\em Manifolds of non-positive curvature}, volume~61 of {\em Progress
  in Mathematics}.
\newblock Birkha\"ser, 1985.

\bibitem{borel:toplie}
A.\ Borel.
\newblock Topology of lie groups and characteristic classes.
\newblock {\em Bull. Amer. Math. Soc.}, 61:397--432, 1955.

\bibitem{bh}
A.~Borel and F.~Hirzebruch.
\newblock Characteristic classes and homogeneous spaces i.
\newblock {\em Amer. J. Math.}, 80:458--538, 1958.

\bibitem{bg}
M.~Bucher and T.~Gelander.
\newblock The generalized chern conjecture for manifolds that are locally a
  product of surfaces.
\newblock {\em Advances in Mathematics}, 228:1503--1542, 2011.

\bibitem{kt_flat}
P.~Tondeur F.~Kamber.
\newblock {\em Flat manifolds}, volume~67 of {\em Lecture Notes in
  Mathematics}.
\newblock Springer-Verlag, Berlin - New York, 1968.

\bibitem{fh}
W.~Fulton and J.~Harris.
\newblock {\em Representation Theory}, volume 129 of {\em Graduate Texts in
  Mathematics}.
\newblock Springer, New York, 1991.

\bibitem{roth}
M.~Golubitsky and B.~Rothschild.
\newblock Primitive subalgebras of exceptional lie algebras.
\newblock {\em Pacific J. Math}, 39:371--393, 1971.

\bibitem{helgason}
S.~Helgason.
\newblock {\em Differential geometry, Lie groups, and symmetric spaces},
  volume~80 of {\em Pure and Applied Mathematics}.
\newblock Academic Press, Inc., New York-London, 2002.

\bibitem{humphreys}
J.~Humphreys.
\newblock {\em Reflection groups and Coxeter groups}, volume~29 of {\em
  Cambridge Studies in Advanced Mathematics}.
\newblock Cambridge University Press, Cambridge, 1990.

\bibitem{ks}
R.~Kirby and L.~Siebenmann.
\newblock {\em Foundational essays on topological manifolds, smoothings, and
  triangulations}, volume~88 of {\em Annals of Mathematics Studies}.
\newblock Princeton University Press, Princeton, NJ, 1977.

\bibitem{lafont}
J.~Lafont and R.~Roy.
\newblock A note on characteristic numbers of non-positively curved manifolds.
\newblock {\em Expo. Math}, 25, 2007.

\bibitem{mehta}
M.~Mehta.
\newblock Basic sets of invariant polynomials for finite reflections groups.
\newblock {\em Comm. Algebra}, 16(5):1083--1098, 1988.

\bibitem{milnor_liediscrete}
J.~Milnor.
\newblock Homology of lie groups made discrete.
\newblock {\em Comment. Math. Helv.}, 58(1):72--85, 1983.

\bibitem{ms}
J.~Milnor and J.~Stasheff.
\newblock {\em Characteristic Classes}.
\newblock Princeton University Press, 1974.

\bibitem{mostow}
G.~Mostow.
\newblock A structure theorem for homogeneous spaces.
\newblock {\em Geom. Dedicata}, 114:87--102, 2005.

\bibitem{takeuchi}
M.~Takeuchi.
\newblock On pontrjagin classes of compact symmetric spaces.
\newblock {\em J. Fac. Sci. Univ. Tokyo Sect. I}, 9:313--328, 1962.

\bibitem{tshishiku1}
B.~Tshishiku.
\newblock {Cohomological obstructions to Nielsen realization}.
\newblock (Submitted) http://arxiv.org/abs/1402.0472, Jan. 2014.

\end{thebibliography}
\end{document}